\theoremstyle{plain}
\newtheorem{lemma}{Lemma}
\newtheorem{proposition}[lemma]{Proposition}
\newtheorem{conjecture}[lemma]{Conjecture}
\newtheorem{openproblem}[lemma]{Open problem}
\theoremstyle{definition}
\newtheorem{definition}[lemma]{Definition}
\newtheorem{remark}[lemma]{Remark}
\newtheorem{example}[lemma]{Example}
\newtheorem{notation}[lemma]{Notation}
\theoremstyle{plain}
\newtheorem{maintheorem}{Theorem}
\newtheorem{maincorollary}{Corollary}
\makeatletter\patchcmd{\ttlh@hang}{\parindent\z@}{\parindent\z@\leavevmode}{}{}\patchcmd{\ttlh@hang}{\noindent}{}{}{}\makeatother % section number patch
\titlespacing*{\section}{0pt}{1mm}{1mm}
\titlespacing*{\subsection}{0pt}{1mm}{1mm}
\titlespacing*{\paragraph}{0pt}{1mm}{1mm}
\newcommand{\myspace}{\setlength{\abovedisplayskip}{2mm}\setlength{\belowdisplayskip}{2mm}}
\newenvironment{Mlist}{\begin{itemize}[topsep=0pt,itemsep=0pt,leftmargin=7mm]}{\end{itemize}}
\newenvironment{Menum}{\begin{enumerate}[topsep=0pt,itemsep=0pt,leftmargin=8mm]}{\end{enumerate}}
\newcommand{\scale}[2]{\text{\scalebox{#1}{$#2$}}}
\newcommand{\PRP}[1]{Proposition~\ref{prp:#1}}
\newcommand{\LEM}[1]{Lemma~\ref{lem:#1}}
\newcommand{\LEMS}[1]{\Cref{#1}}
\newcommand{\THM}[1]{Theorem~\ref{thm:#1}}
\newcommand{\COR}[1]{Corollary~\ref{cor:#1}}
\newcommand{\DEF}[1]{Definition~\ref{def:#1}}
\newcommand{\RMK}[1]{Remark~\ref{rmk:#1}}
\newcommand{\EQN}[1]{\eqref{eqn:#1}}
\newcommand{\SEC}[1]{\textsection\,\ref{sec:#1}}
\newcommand{\APP}[1]{Appendix~\ref{sec:#1}}
\newcommand{\FIG}[1]{Figure~\ref{fig:#1}}
\newcommand{\TAB}[1]{Table~\ref{tab:#1}}
\newcommand{\EXM}[1]{Example~\ref{exm:#1}}
\newcommand{\EXMS}[1]{\Cref{#1}}
\newcommand{\END}{\hfill $\vartriangleleft$}
\newcommand{\ALG}[1]{Algorithm~\ref{alg:#1}}
\newcommand{\NTN}[1]{Notation~\ref{ntn:#1}}
\newcommand{\AXM}[1]{\ref{axm:#1}}
\newcommand{\PPP}[1]{\ref{ppp:#1}}
\newcommand{\df}[1]{\textit{#1}}
\newcommand{\Wlog}{without loss of generality }
\newcommand{\resp}{respectively}
\newcommand{\st}{such that }
\newcommand{\wrt}{with respect to }
\renewcommand{\c}{\colon}
\newcommand{\set}[2]{\{#1 : #2\}}
\newcommand{\dto}{\dasharrow}
\newcommand{\bas}[1]{\langle #1\rangle}
\newcommand{\sub}[1]{\downarrow\{#1\}}
\newcommand{\sng}{\operatorname{sing}}
\newcommand{\id}{\operatorname{id}}
\newcommand{\bp}{\operatorname{bp}}
\renewcommand{\l}{\ell}
\newcommand{\ii}{\mathfrak{i}}
\newcommand{\C}{{\mathbb{C}}}
\newcommand{\R}{{\mathbb{R}}}
\newcommand{\Z}{{\mathbb{Z}}}
\newcommand{\A}{{\mathbb{A}}}
\newcommand{\B}{{\mathbb{B}}}
\renewcommand{\P}{{\mathbb{P}}}
\newcommand{\V}{{\mathbb{V}}}
\newcommand{\E}{{\mathbb{E}}}
\newcommand{\cA}{{\mathcal{A}}}
\newcommand{\cB}{{\mathcal{B}}}
\newcommand{\cC}{{\mathcal{C}}}
\newcommand{\cF}{{\mathcal{F}}}
\newcommand{\cG}{{\mathcal{G}}}
\newcommand{\cH}{{\mathcal{H}}}
\newcommand{\cI}{{\mathcal{I}}}
\newcommand{\cL}{{\mathcal{L}}}
\newcommand{\cM}{{\mathcal{M}}}
\newcommand{\cR}{{\mathcal{R}}}
\newcommand{\cQ}{{\mathcal{Q}}}
\newcommand{\cU}{{\mathcal{U}}}
\newcommand{\cV}{{\mathcal{V}}}
\newcommand{\fF}{{\mathfrak{F}}}
\newcommand{\fH}{{\mathfrak{H}}}
\newcommand{\oH}{{\hat{H}}}
\newcommand{\oF}{{\hat{F}}}
\newcommand{\oM}{{\hat{M}}}
\newcommand{\oN}{{\hat{N}}}
\newcommand{\oG}{{\hat{G}}}
\newcommand{\oO}{{\hat{O}}}
\newcommand{\bhA}{{\widehat{\mathbf{A}}}}
\newcommand{\bA}{\mathbf{A}}
\newcommand{\bB}{\mathbf{B}}
\newcommand{\bI}{\mathbf{I}}
\newcommand{\tx}{{\tilde{x}}}
\newcommand{\ty}{{\tilde{y}}}
\newcommand{\tT}{\widetilde{T}}
\newcommand{\tla}{{\tilde{\lambda}}}
\newcommand{\cc}{\operatorname{c}}
\newcommand{\vv}{\operatorname{v}}
\newcommand{\ee}{\operatorname{e}}
\renewcommand{\tt}{\operatorname{t}}
\newcommand{\mm}{\operatorname{m}}
\newcommand{\gR}{\Upsilon_R}
\newcommand{\gS}{\Upsilon_S}
\newcommand{\e}{\varepsilon}
\newcommand{\start}{\operatorname{start}}
\newcommand{\wend}{\operatorname{end}}
\newcommand{\codim}{\operatorname{codim}}
\colorlet{colG}{DarkSeaGreen}
\definecolor{colR}{HTML}{CC6677}
\definecolor{colO}{HTML}{DDCC77}
\definecolor{colB}{HTML}{6699CC}
\colorlet{colY}{Gold!90!black}
\colorlet{colGray}{black!10}
\colorlet{cole}{black!35} % color for edges
\colorlet{colv}{black!60} % color for vertices (draw)
\colorlet{colvf}{black!10} % color for vertices (fill)
\colorlet{colanchor}{colG} % color for anchor vertices (draw)
\colorlet{colanchorf}{colG!30!white} % color for anchor vertices (draw)
\colorlet{colvm}{colR} % color for moving vertex (draw)
\colorlet{colvm2}{colB} % color for 2nd moving vertex (draw)
\colorlet{colvmf}{colR!30!white} % color for moving vertex (draw)
\colorlet{colvmf2}{colB!30!white} % color for 2nd moving vertex (draw)
\colorlet{colvh}{colO!80!Orange} % color for highlighting vertex (draw)
\colorlet{colvhf}{colvh!30!white} % color for highlighting vertex (draw)
\colorlet{coleh}{colvh} % color for highlighting edge
\tikzstyle{vertex}=[circle,inner sep=0pt, minimum size=5pt, draw=colv,fill=colvf]
\tikzstyle{anchorvertex}=[vertex, draw=colanchor,fill=colanchorf]
\tikzstyle{mvertex}=[vertex, draw=colvm,fill=colvmf]
\tikzstyle{hvertex}=[vertex, draw=colvh,fill=colvhf]
\tikzstyle{cvertex}=[line width=0.2mm, draw=colv,fill=colvf]
\tikzstyle{cvertexr}=[radius=0.1]
\tikzstyle{anchorcvertex}=[cvertex, draw=colanchor,fill=colanchorf]
\tikzstyle{mcvertex}=[cvertex, draw=colvm,fill=colvmf]
\tikzstyle{hcvertex}=[cvertex, draw=colvh,fill=colvhf]
\tikzstyle{edge}=[line width=1.75pt, draw=cole]
\tikzstyle{edgeh}=[edge, draw=coleh]
\tikzstyle{labelsty}=[font=\scriptsize]
\tikzstyle{traj}=[line width=1pt, draw=colvm,densely dashed]
\tikzstyle{traj2}=[line width=1pt, draw=colvm2,densely dashed]
\newcommand{\pl}{(0,0)}
\newcommand{\pr}{(3,0)}
\newcommand{\PM}{(1.5,1)}
\newcommand{\pL}{(0,1)}
\newcommand{\pR}{(3,1)}
\newcommand{\pM}{(1.5,1.8)}
\newcommand{\pu}{(0,-1)}
\newcommand{\pv}{(3,-1)}
\newcommand{\pA}{(0,2)}
\newcommand{\pB}{(3,2)}
\newcommand{\pe}{(2,0.5)}
\newcommand{\pf}{(1,1)}
\newcommand{\pg}{(1,2)}
\newcommand{\ph}{(2,1.5)}
\newcommand{\pF}{(1.8,2.6)}
\newcommand{\pE}{(2,-0.5)}
\newcommand{\pz}{(3.5,0.5)}
\newcommand{\Aedge}[3] {\draw[edge, draw=colanchor] #1 -- #2;}
\newcommand{\Iedge}[3] {\draw[edge] #1 -- #2;}
\newcommand{\DAedge}[2] {\draw[very thick, draw=colanchor, dashed] #1 -- #2;}
\newcommand{\DIedge}[2] {\draw[very thick, draw=cole, dotted] #1 -- #2;}
\newcommand{\DAvert}[1] {\draw[draw=colanchor, fill=colanchorf, dotted] #1 circle [cvertexr];}
\newcommand{\DMvert}[1] {\draw[draw=colvm, fill=colvmf, dotted] #1 circle [cvertexr];}
\newcommand{\PedgeE}[2] {\draw[edge] #1 -- #2 node[labelsty,midway,right] {$+$};}
\newcommand{\PedgeW}[2] {\draw[edge] #1 -- #2 node[labelsty,midway,left]  {$+$};}
\newcommand{\PedgeN}[2] {\draw[edge] #1 -- #2 node[labelsty,midway,above] {$+$};}
\newcommand{\MedgeE}[2] {\draw[edge] #1 -- #2 node[labelsty,midway,right] {$-$};}
\newcommand{\MedgeW}[2] {\draw[edge] #1 -- #2 node[labelsty,midway,left]  {$-$};}
\newcommand{\MedgeS}[2] {\draw[edge] #1 -- #2 node[labelsty,midway,below] {$-$};}
\newcommand{\AvertN}[2] {\node[anchorvertex,label={[labelsty]above:$#2$}] at #1 {};}
\newcommand{\AvertS}[2] {\node[anchorvertex,label={[labelsty]below:$#2$}] at #1 {};}
\newcommand{\AvertW}[2] {\node[anchorvertex,label={[labelsty]left:$#2$}] at #1 {};}
\newcommand{\AvertE}[2] {\node[anchorvertex,label={[labelsty]right:$#2$}] at #1 {};}
\newcommand{\MvertN}[2] {\node[mvertex,label={[labelsty]above:$#2$}] at #1 {};}
\newcommand{\MvertS}[2] {\node[mvertex,label={[labelsty]below:$#2$}] at #1 {};}
\newcommand{\MvertE}[2] {\node[mvertex,label={[labelsty]right:$#2$}] at #1 {};}
\newcommand{\IvertN}[2] {\node[vertex,label={[labelsty]above:$#2$}] at #1 {};}
\newcommand{\IvertS}[2] {\node[vertex,label={[labelsty]below:$#2$}] at #1 {};}
\newcommand{\IvertW}[2] {\node[vertex,label={[labelsty]left:$#2$}] at #1 {};}
\newcommand{\IvertE}[2] {\node[vertex,label={[labelsty]right:$#2$}] at #1 {};}
\newcommand{\markerN}[2] {\draw[draw=white, fill=white, line width=0.2mm] #1 circle [cvertexr] node[above, white] {$#2$};}
\newcommand{\markerE}[2] {\draw[draw=white, fill=white, line width=0.2mm] #1 circle [cvertexr] node[right, white] {$#2$};}
\newcommand{\Qaa}{(0,0)}
\newcommand{\Qea}{(8,0)}
\newcommand{\Qbb}{(2,2.5)}
\newcommand{\Qkk}{(7,2)}
\newcommand{\Qcc}{(2.5,4)}
\newcommand{\Qjj}{(6,4)}
\newcommand{\Qbc}{(-1,3)}
\newcommand{\Qbd}{(2,6)}
\newcommand{\Qcd}{(6,7)}
\newcommand{\Qbe}{(2,8)}
\newcommand{\Paa}{(0,0)}
\newcommand{\Pba}{(2,0)}
\newcommand{\Pca}{(4,0)}
\newcommand{\Pea}{(8,0)}
\newcommand{\Pab}{(0,2)}
\newcommand{\Pbb}{(2,2)}
\newcommand{\Pcb}{(4,2)}
\newcommand{\Pdb}{(6,2)}
\newcommand{\Pac}{(0,4)}
\newcommand{\Pbc}{(2,4)}
\newcommand{\Pcc}{(4,4)}
\newcommand{\Pad}{(0.8,5)}
\newcommand{\Pbd}{(2,6)}
\newcommand{\Pcd}{(4,6)}
\newcommand{\Pdd}{(6,6)}
\newcommand{\Pae}{(0,8)}
\newcommand{\Pbe}{(2,8)}
\newcommand{\Pce}{(4,8)}
\newcommand{\Pde}{(6,8)}
\newcommand{\Paf}{(0,10)}
\newcommand{\Pbf}{(2,10)}
\newcommand{\Pef}{(8,10)}
\newcommand{\Pmm}{(4,9)}
\newcommand{\Pkk}{(5,3)}
\newcommand{\Pii}{(1,5)}
\newcommand{\Pjj}{(5,5)}
\newcommand{\BUedge}[3]  {\draw[edge] #1 -- #2;}
\newcommand{\BDedge}[2]  {\draw[edge, dotted] #1 -- #2;}
\newcommand{\BVedge}[3]  {\draw[edgeh] #1 -- #2;}
\newcommand{\BEedge}[2]  {\draw[edgeh, dotted] #1 -- #2;}
\newcommand{\BIvertN}[2] {\node[vertex,label={[labelsty]above:$#2$}] at #1 {};}
\newcommand{\BIvertS}[2] {\node[vertex,label={[labelsty]below:$#2$}] at #1 {};}
\newcommand{\BIvertW}[2] {\node[vertex,label={[labelsty]left:$#2$}] at #1 {};}
\newcommand{\BIvertE}[2] {\node[vertex,label={[labelsty]right:$#2$}] at #1 {};}
\newcommand{\BBvertN}[2] {\node[hvertex,label={[labelsty]above:$#2$}] at #1 {};}
\newcommand{\BBvertS}[2] {\node[hvertex,label={[labelsty]below:$#2$}] at #1 {};}
\newcommand{\BBvertW}[2] {\node[hvertex,label={[labelsty]left:$#2$}] at #1 {};}
\newcommand{\BBvertE}[2] {\node[hvertex,label={[labelsty]right:$#2$}] at #1 {};}
\newcommand{\BAvertN}[2] {\node[anchorvertex,label={[labelsty]above:$#2$}] at #1 {};}
\newcommand{\BAvertS}[2] {\node[anchorvertex,label={[labelsty]below:$#2$}] at #1 {};}
\newcommand{\BAvertW}[2] {\node[anchorvertex,label={[labelsty]left:$#2$}] at #1 {};}
\newcommand{\BAvertE}[2] {\node[anchorvertex,label={[labelsty]right:$#2$}] at #1 {};}
\newcommand{\BMvertN}[2] {\node[mvertex,label={[labelsty]above:$#2$}] at #1 {};}
\newcommand{\BMvertS}[2] {\node[mvertex,label={[labelsty]below:$#2$}] at #1 {};}
\newcommand{\BMvertW}[2] {\node[mvertex,label={[labelsty]left:$#2$}] at #1 {};}
\newif\ifshowtikz
\let\oldtikzpicture\tikzpicture
\let\oldendtikzpicture\endtikzpicture
\renewenvironment{tikzpicture}{\ifshowtikz\expandafter\oldtikzpicture\else\comment\fi}{\ifshowtikz\oldendtikzpicture\else\endcomment\fi}
\begin{document}
\myspace

\begin{center}
\LARGE
Coupler curves of moving graphs and counting realizations of rigid graphs
\\[5mm]\large
Georg Grasegger, Boulos El Hilany, Niels Lubbes
\\[5mm]\large
\today
\end{center}

\begin{abstract}
A calligraph is a graph that for almost all edge length assignments moves
with one degree of freedom in the plane,
if we fix an edge and consider the vertices as revolute joints.
The trajectory of a distinguished vertex of the calligraph is called its coupler curve.
To each calligraph we uniquely assign a vector consisting of three integers.
This vector bounds the degrees and geometric genera of irreducible components of the coupler curve.
A graph, that up to rotations and translations
admits finitely many, but at least two, realizations into the plane
for almost all edge length assignments, is a union of two calligraphs.
We show that this number of realizations is equal to a certain inner product of the vectors
associated to these two calligraphs.
As an application we obtain an improved algorithm for counting numbers of realizations,
and by counting realizations we characterize invariants of coupler curves.

\textbf{Keywords:} minimally rigid graphs, Laman graphs, number of realizations, coupler curves, infinitely near base points,
algebraic series of planar curves, divisor classes

\textbf{MSC Class:} 52C25, 70B15, 14C20
\end{abstract}

\begingroup
\def\addvspace#1{\vspace{-2mm}}
\tableofcontents
\endgroup

\section{Introduction}
\label{sec:intro}

In this article we count realizations of rigid graphs into the plane
and investigate invariants of the trajectories of vertices of graphs
that move in the plane.
We start with a warm up example inspired by \citep[Figure~5]{rigid-num-borcea}
in order to explain our main result and the state of art.
The quoted definitions in this introduction are meant for building some intuition,
and are made precise in~\SEC{main}.
We conclude this introduction with an overview of the remaining article.

\textbf{Warm up example.} Let us consider the graph~$\cC_3$ in \FIG{intro}
and assign to the edges $\{0,3\}$, $\{1,3\}$ and $\{2,3\}$
some lengths $\lambda_{03}$, $\lambda_{13}$ and $\lambda_{23}$, \resp.
We now consider all possible ways to assign coordinates to each of the vertices \st
vertex~1 is sent to $(0,0)$, vertex~2 is sent to $(1,0)$
and the distance between vertex $i$ and vertex $3$ is equal to $\lambda_{i3}$
for all $i\in\{0,1,2\}$:
\[
(x_i-x_3)^2+(y_i-y_3)^2=\lambda_{i3}^2.
\]
The ``coupler curve'' of $\cC_3$ is defined as the
set of all possible coordinates for vertex~0 and thus
consists of two circles. The center of the second circle is the reflection of
vertex~3 along the line spanned by the edge~$\{1,2\}$.
The marked graphs~$\cC_3$ and $\cL$ in \FIG{intro} are examples of ``calligraphs''.
Informally, when fixing vertices 1 and 2 of a calligraph, we require that the vertex~0
draws a curve.

\begin{figure}[!h]
\centering
\setlength{\tabcolsep}{5mm}
\begin{tabular}{ccc}
$\cC_3$ & $\cL$ & $\cC_3\cup\cL$
\\
\begin{tikzpicture}[scale=0.8]
\draw[traj] \pe circle [radius=1.12];
\draw[traj] \pE circle [radius=1.12];
\Iedge \pl \pe;
\Iedge \pr \pe;
\Iedge \pe \pf;
\Aedge \pl \pr;
\AvertN \pl 1;
\AvertN \pr 2;
\IvertN \pe 3;
\MvertN \pf 0;
\markerN{(0,-1.9)}{}
\end{tikzpicture}
&
\begin{tikzpicture}[scale=0.8]
\draw[traj2] \pl circle [radius=1.414];
\Iedge \pl \pf;
\Aedge \pl \pr;
\AvertN \pl 1;
\AvertN \pr 2;
\MvertN \pf 0;
\markerN{(0,-1.9)}{}
\end{tikzpicture}
&
\begin{tikzpicture}[scale=0.8]
\draw[traj] \pe circle [radius=1.12];
\draw[traj] \pE circle [radius=1.12];
\draw[traj2] \pl circle [radius=1.414];
\MvertN{(1,1)}{};
\MvertN{(1,-1)}{};
\MvertN{(1.35,0.41)}{};
\MvertN{(1.35,-0.41)}{};

\Iedge \pl \pe;
\Iedge \pr \pe;
\Iedge \pe \pf;
\Iedge \pl \pf;
\Aedge \pl \pr;
\AvertN \pl 1;
\AvertN \pr 2;
\IvertN \pe 3;
\MvertN \pf 0;

\markerN{(0,-1.9)}{}
\end{tikzpicture}
\end{tabular}
\vspace{-0.7cm}
\caption{The coupler curves of the two calligraphs intersect in four different points.}
\label{fig:intro}
\end{figure}

The union $\cC_3\cup \cL$ of the two calligraphs in \FIG{intro} can
be realized in the plane in four different ways up to translations and rotations.
We see in \FIG{min} how the four realizations of $\cC_3\cup \cL$
are related to the number of intersections between the coupler curves of
$\cC_3$ and~$\cL$.

\begin{figure}[!ht]
\centering
\setlength{\tabcolsep}{2.5mm}
\begin{tabular}{cccc}
\begin{tikzpicture}[scale=0.7]
\draw[traj] \pe circle [radius=1.12];
\draw[traj] \pE circle [radius=1.12];
\draw[traj2] \pl circle [radius=1.414];
\Iedge{(0,0)}{(2,0.5)};
\Iedge{(3,0)}{(2,0.5)};
\Iedge{(2,0.5)}{(1,1)};
\Iedge{(0,0)}{(1,1)};
\Iedge{(0,0)}{(3,0)};
\IvertN{(0,0)}{};
\IvertN{(3,0)}{};
\IvertN{(2,0.5)}{};
\MvertN{(1,1)}{};
\markerN{(0,-1.9)}{}
\end{tikzpicture}
&
\begin{tikzpicture}[scale=0.7]
\draw[traj] \pe circle [radius=1.12];
\draw[traj] \pE circle [radius=1.12];
\draw[traj2] \pl circle [radius=1.414];
\Iedge{(0,0)}{(2,-0.5)};
\Iedge{(3,0)}{(2,-0.5)};
\Iedge{(2,-0.5)}{(1,-1)};
\Iedge{(0,0)}{(1,-1)};
\Iedge{(0,0)}{(3,0)};
\IvertN{(0,0)}{};
\IvertN{(3,0)}{};
\IvertN{(2,-0.5)}{};
\MvertN{(1,-1)}{};
\markerN{(0,-1.9)}{}
\end{tikzpicture}
&
\begin{tikzpicture}[scale=0.7]
\draw[traj] \pe circle [radius=1.12];
\draw[traj] \pE circle [radius=1.12];
\draw[traj2] \pl circle [radius=1.414];
\Iedge{(0,0)}{(2,-0.5)};
\Iedge{(3,0)}{(2,-0.5)};
\Iedge{(2,-0.5)}{(1.35,0.41)};
\Iedge{(0,0)}{(1.35,0.41)};
\Iedge{(0,0)}{(3,0)};
\IvertN{(0,0)}{};
\IvertN{(3,0)}{};
\IvertN{(2,-0.5)}{};
\MvertN{(1.35,0.41)}{};
\markerN{(0,-1.9)}{}
\end{tikzpicture}
&
\begin{tikzpicture}[scale=0.7]
\draw[traj] \pe circle [radius=1.12];
\draw[traj] \pE circle [radius=1.12];
\draw[traj2] \pl circle [radius=1.414];
\Iedge{(0,0)}{(2,0.5)};
\Iedge{(3,0)}{(2,0.5)};
\Iedge{(2,0.5)}{(1.35,-0.41)};
\Iedge{(0,0)}{(1.35,-0.41)};
\Iedge{(0,0)}{(3,0)};
\IvertN{(0,0)}{};
\IvertN{(3,0)}{};
\IvertN{(2,0.5)}{};
\MvertN{(1.35,-0.41)}{};
\markerN{(0,-1.9)}{}
\end{tikzpicture}
\end{tabular}
\vspace{-0.8cm}
\caption{The intersections of coupler curves of two calligraphs
are related to the number of realizations of a minimally rigid graph
into the plane.}
\label{fig:min}
\end{figure}

The graph~$\cC_3\cup \cL$ is called ``minimally rigid graph''
and its ``number of realizations''~$\cc(\cC_3\cup \cL)$ is equal to four
for almost all choices of edge length assignments.
This number can be expressed as the number of complex solutions of quadratic equations.
In \FIG{min} all solutions are real, but in general there may be non-real solutions.

\textbf{Main result.} In this article we assign to a calligraph~$\cG$ its ``class''~$[\cG]$,
namely a triple~$(a,b,c)$ of integers that satisfies two axioms.
Axiom~\AXM{1} states that
$[\cL]=(1,1,0)$, $[\cR]=(1,0,1)$ and $[\cC_3]=(2,0,0)$,
where $\cR$ is defined in \FIG{cal}.
Axiom~\AXM{2} essentially states that if
$\cG\cup\cG'$ is a minimally rigid graph for some calligraphs $\cG$ and~$\cG'$,
then its number of realizations equals
$\cc(\cG\cup\cG')=[\cG]\cdot[\cG']=2(a\cdot a'-b\cdot b'-c\cdot c')$ (see \DEF{class}).
For example, we verify that~$\cc(\cC_3\cup\cL)=[\cC_3]\cdot[\cL]=2(2\cdot 1-0\cdot 1-0\cdot 0)=4$.
Our main result is \THM{class}, which asserts that the class of a calligraph exists and is unique.
By its \COR{class} the class satisfies six properties \PPP{1}---\PPP{6} that characterize
invariants of coupler curves.
For example, the coupler curve of $\cC_3$ is of degree~$4$ by \PPP{2} and \PPP{5}.
Property~\PPP{6} is more technical and bounds the geometric genera and degrees of each
of the irreducible components of a coupler curve.
As an application of our methods we obtain an improved algorithm
for computing the number of realizations of minimally rigid graphs
that uses the algorithm in~\cite{rigid-alg} as a fallback algorithm (see \RMK{timing}).

\textbf{State of the art: coupler curves.}
A calligraph is a special case of a linkage with revolute joints
and thus its origins
can be traced back to at least 1785, when James Watt
used a calligraph to convert a rotational motion to an approximate straight-line motion
for his steam engine.
Kempe described in 1876
a method that constructs for any given planar algebraic curve
a linkage that traces out a portion of this curve \cite{kempe}.
Coupler curves of linkages have been studied extensively by engineers
and we refer to \citep[Sections~7 and 15.3.7]{hunt} for more details
and further references.
Recently, a method was provided for constructing a linkage,
with a small number of links and joints,
that has a given rational curve as trajectory~\cite{motion-planar,motion-kempe}.
There exist minimally rigid graphs \st
for a carefully chosen edge length assignment, a vertex in this graph
traces out a trajectory, while fixing some edge into place.
We refer to \cite{yet} for an overview of the classification of such
paradoxically moving graphs.

\textbf{State of the art: number of realizations.}
The investigation of rigid structures can be traced back to
James Clerk Maxwell~\cite{maxwell} and there is currently again a considerable interest in rigidity
theory due to various applications in natural science and engineering \cite{rigid-thorpe}.
Minimally rigid graphs (also known as Laman graphs) have been classified in \cite{rigid-gei}
by Pollaczek-Geiringer and independently in \cite{rigid-laman}.
Bounds on the number of realizations of such graphs have been investigated in
\cite{rigid-stef,Emiris09,rigid-num-borcea,Bartzos_2020,bartzos2021new,rigid-low-bound,rigid-num-jan}.
Algorithms and theory for computing
the precise number of realizations for minimally rigid graphs
have been investigated in \cite{rigid-alg,rigid-jack-2018,RealizationsIsostatic}.

\textbf{Theoretical context.}
The number of realizations of the union of two calligraphs is
equal to the number of complex intersections of their coupler curves
for almost all choices of edge length assignments.
In \FIG{min}, for example, all these intersections are real.
It follows from B\'ezout's theorem that this number
is equal to the product of the degrees of the coupler curves
minus the complex intersections at infinity counted with multiplicities.
If we fix a calligraph and vary its edge length assignments,
then we obtain an ``algebraic series'' of coupler curves in the projective plane.
If all curves in this series meet
the same complex points, then these points are called ``base points''.
We show that the complex intersections at infinity of coupler curves are exactly at such base points.
In this light, our main result is that a base point associated to a calligraph
coincides with a base point associated to either the calligraph $\cC_3$, $\cL$ or~$\cR$.
The degree of a coupler curve of a calligraph and its multiplicities at the base points
remains constant for almost all choices of edge length assignments and is encoded by
the class of a calligraph.
The number of complex intersections of the coupler curves,
minus the number of intersections at infinity, is equal to a certain inner product between the classes
(see Axiom~\AXM{2}).

To prepare the reader, let us consider the base points of calligraphs in a bit more detail.
Almost each curve in the algebraic series of~$\cC_3$
is a union of two circles that meet complex conjugate ``cyclic'' points
at infinity with multiplicity~2.
The algebraic series of~$\cL$ contains all circles that are centered around vertex~1.
These circles meet aside the cyclic base points, also the ``1-centric'' base points.
Similarly, the algebraic series of $\cR$
contains all circles that are centered at vertex~2 and meet aside the
cyclic base points the ``2-centric'' base points.
The 1-centric and 2-centric points are ``infinitely near'' to the cyclic base points.
This terminology is made precise in~\SEC{bp}.
The multiplicities of general curves in the algebraic series of a calligraph at the cyclic, 1-centric and 2-centric
points correspond to the three numbers in the class of this calligraph.
In particular, $[\cC_3]=(2,0,0)$, $[\cL]=(1,1,0)$ and $[\cR]=(1,0,1)$ (see Axiom~\AXM{1}).
We encode the algebraic series of calligraphs as so called divisor classes,
and obtain the two axioms and six properties for classes using standard algebro geometric methods.
The multiplicity at the cyclic points (classically known as \df{cyclicity}) was used in \cite{wunderlich} to
recover the degree of coupler curves for a certain type of linkages.
To our best knowledge the 1-centric and 2-centric base points have not been considered before
and allow us to characterize the coupler curves of any calligraph.

We want to emphasize that in this article we count the number of realizations
over the complex numbers, and thus we obtain in general only an upper bound for the number of \emph{real} realizations
(see \citep[Section~8, page~5]{rigid-jack-2018}).

\textbf{Acknowledgements.}
We thank Josef Schicho for the inspiring and fruitful discussions.
We thank Matteo Gallet for carefully reading the manuscript and giving useful
comments concerning its presentation.
We thank the anonymous reviewers for valuable feedback.
Niels Lubbes was supported by the Austrian Science Fund (FWF): P33003.
Boulos El Hilany was in his first year supported by P33003 as well,
and for the remaining time by the grant DFG EL1092/1-1.
Georg Grasegger was supported by the Austrian Science Fund (FWF): P31888.

\textbf{Overview.}
In \SEC{main} we start with introducing terminology and conclude with
a precise statement of the main result, namely \THM{class}, together with some
conjectures and open problems.

We clarify in \SEC{mainapp} how the main result can be applied to
determine the number of realizations of minimally rigid graphs
and invariants of coupler curves such as degree and geometric genus.
In particular, we discuss the correctness and heuristics of
an algorithm for computing the number of realizations.
In \APP{tree} we give a detailed example illustrating the
recursive structure of this algorithm.

In \SEC{bp} we start by recalling the notion of infinitely near base points
and with a proof for \PRP{pseudo}, which states that
\THM{class} and \COR{class} hold under the assumption
that calligraphs are ``centric'', namely that
the base points of the algebraic series of a calligraph are either cyclic, 1-centric or 2-centric.

In \SEC{op} we show that the procedure for detecting infinitely near base points
can be done directly on a system of quadric equations defined by a calligraph.
This translates into sufficient conditions for the centricity of calligraphs
in terms of symbolic modifications of these equations.

In \SEC{proof} we show that all calligraphs are centric
by using the sufficient conditions from~\SEC{op}
and thereby conclude the proof of \THM{class} and \COR{class}.
We also need \PRP{vert}, but we moved its proof to \APP{lines}
as this part is independent and may distract from the
logical path.

We have made an effort to make this article accessible to a wide audience.
The main results and algorithmic applications do not require many preliminaries,
however, for the proofs we assume familiarity with algebraic geometry.
In order to prepare the reader we list in \APP{nota} the
notation that is used across different sections.

\section{Statements of main result and open problems}
\label{sec:main}

In \SEC{class} we state the definition of the class of a calligraph
and \THM{class}.
In \SEC{coupler} we introduce definitions related to coupler curves
in order to state \COR{class}.
We conclude in \SEC{problems} with some conjectures and open problems.

\subsection{Classes of calligraphs}
\label{sec:class}

Let $\cG$ denote a graph with vertices~$\vv(\cG)\subset \Z_{\geq0}$ and edges~$\ee(\cG)$.
In what follows, all graphs are undirected and simple.
We call a graph $\cG$ \df{marked} if $\{1,2\}\in\ee(\cG)$.
The graphs $\cL$, $\cR$ and $\cC_v$ for $v\in\Z\setminus\{0,1,2\}$ are defined as in \FIG{cal}.
\begin{figure}[!ht]
\centering
\setlength{\tabcolsep}{5mm}
\begin{tabular}{ccc}
\begin{tikzpicture}
\Iedge \pl \PM;
\Aedge \pl \pr;
\AvertN \pl 1;
\AvertN \pr 2;
\MvertN \PM 0;
\end{tikzpicture}
&
\begin{tikzpicture}
\Iedge \pr \PM;
\Aedge \pl \pr;
\AvertN \pl 1;
\AvertN \pr 2;
\MvertN \PM 0;
\end{tikzpicture}
&
\begin{tikzpicture}
\Iedge \pl \pe;
\Iedge \pr \pe;
\Iedge \pe \pf;
\Aedge \pl \pr;
\AvertN \pl 1;
\AvertN \pr 2;
\IvertN \pe v;
\MvertN \pf 0;
\end{tikzpicture}
\\
$\cL$ & $\cR$ & $\cC_v$
\end{tabular}
\caption{Three calligraphs that play a central role.}
\label{fig:cal}
\end{figure}

We call $\cG$ a \df{minimally rigid graph} if $|\ee(\cG)|=2|\vv(\cG)|-3$ and
$|\ee(\cH)|\leq 2|\vv(\cH)|-3$ for all subgraphs $\cH\subset \cG$
\st $|\vv(\cH)|>1$.
For example, $\cC_3\cup \cL$ in \FIG{intro} and the triangle~$\cL\cup\cR$ are both minimally rigid graphs.

We call $\cG$ a \df{calligraph} if
$\vv(\cG)\cap \vv(\cC_v)=\{0,1,2\}$, $\ee(\cG)\cap\ee(\cC_v)=\{\{1,2\}\}$
and $\cG\cup \cC_v$ is a minimally rigid graph for all $v\notin\vv(\cG)$.
For example, $\cL$, $\cR$ and $\cC_3$ are all calligraphs.
By counting the vertices and edges we find that
a calligraph is a minimally rigid graph minus one edge.

Suppose that $\text{Prop}\c X\to \{\text{True},~\text{False}\}$ is a proposition about some algebraic set~$X$
\st $\set{x\in X}{\text{Prop}(x)=\text{False}}$ is contained in an algebraic set~$Y$.
If each irreducible component of $Y$ forms a lower dimensional subset of some irreducible component of~$X$,
then we call any element in $X\setminus Y$ \df{general}
and we say that $\text{Prop}(x)=$True \df{for almost all}~$x\in X$.
Informally, we may think of a general element in~$X$ as a random element.
Notice that if $X\subseteq\C^n$, then there exists a
non-zero $n$-variate complex polynomial~$f$ \st $f(y)=0$ for all~$y\in Y$.
See \RMK{generic} for the relation to the notion of ``generic''.

If $\cG$ is a marked graph, then
the \df{set of edge length assignments}~$\Omega_\cG$ is defined as the set of
maps~$\omega\c\ee(\cG)\to \C$ \st $\omega(\{1,2\})=1$.
The \df{set of realizations}~$\Xi_\cG^\omega$, that are compatible
with the edge length assignment~$\omega\in\Omega_\cG$, is defined as the set of maps~$\xi\c\vv(\cG)\to \C^2$ \st
$\xi(1)=(0,0)$, $\xi(2)=(1,0)$ and
\[
(x_i-x_j)^2+(y_i-y_j)^2=\omega(\{i,j\})^2
\quad\text{for all}\quad\{i,j\}\in\ee(\cG),
\]
where $\xi(i)=(x_i,y_i)$ for all $i\in\vv(\cG)$.

It follows from \citep[Theorem~3.6]{rigid-jack-2018} or \citep[Corollary~1.11]{rigid-alg} that
$|\Xi_\cG^\omega|=|\Xi_\cG^{\omega'}|$ for almost all edge length assignments~$\omega,\omega'\in \Omega_\cG$.
If $\omega\in \Omega_\cG$ is general,
then we define the \df{number of realizations} of $\cG$ as
\[
\cc(\cG):=|\Xi_\cG^\omega|\in \Z_{\geq 0}\cup\{\infty\}.
\]
Notice that the definition ``general'' means in this case that the set
$\set{\omega\in \Omega_\cG}{|\Xi_\cG^\omega|\neq\cc(\cG)}$ is contained in a lower dimensional set
of the algebraic set~$\Omega_\cG$.
The number of realizations does not depend on the choice of the marked edge~$\{1,2\}$
and thus is well-defined for graphs that are not marked.
It follows from \cite{rigid-gei} or \cite{rigid-laman}
that $\cG$ is a minimally rigid graph if and only if $\cc(\cG)\in\Z_{>0}$.
For example, if $\cG=\cC_3\cup\cL$, then each realization in~$\Xi_\cG^\omega$
is illustrated in \FIG{min} for some general edge length assignment~$\omega\in \Omega_\cG$.

We call $(\cG,\cG')$ a \df{calligraphic split} for the graph~$\cG\cup \cG'$,
if $\cG$ and $\cG'$ are calligraphs \st
$\ee(\cG)\cap\ee(\cG')=\{\{1,2\}\}$ and  $\vv(\cG)\cap\vv(\cG')=\{0,1,2\}$.
If in addition
$\vv(\cG\cup\cG')-2\geq |\vv(\cG)|\geq  |\vv(\cG')|$,
then $(\cG,\cG')$ is called \df{non-trivial}.
For example, $(\cC_3,\cL)$ as depicted in \FIG{intro} is a calligraphic split of $\cC_3\cup\cL$.
Notice that all minimally rigid graphs admit a calligraphic split of the form
$(\cG,\cL)$ or $(\cG,\cR)$ for some calligraph~$\cG$.

We define the bilinear map $\cdot\c\Z^3\times\Z^3 \to\Z$ as
\[
(a,b,c)\cdot (a',b',c')=2\,(a\,a'-b\,b'-c\,c').
\]

\begin{definition}
\label{def:class}
A \df{class} for calligraphs is a function that assigns to each calligraph~$\cG$
an element $[\cG]$ of~$\Z^3$ \st the following two axioms are fulfilled.
\begin{enumerate}[topsep=0mm,itemsep=0mm,label=A\arabic*.,ref=A\arabic*]
\item\label{axm:1}
$[\cL]=(1,1,0)$, $[\cR]=(1,0,1)$ and $[\cC_v]=(2,0,0)$ for all $v\in \Z_{\geq3}$.
\item\label{axm:2}
If $(\cG,\cG')$ is a calligraphic split, then $\cc(\cG\cup\cG')=[\cG]\cdot[\cG']$. \END
\end{enumerate}
\end{definition}

\begin{maintheorem}
\label{thm:class}
The class for calligraphs exists and is unique.
\end{maintheorem}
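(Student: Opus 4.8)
We prove existence and uniqueness separately: existence is algebro-geometric and constitutes the bulk of the paper, whereas uniqueness is a short combinatorial consequence of the two axioms.

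\textbf{Existence.} To a calligraph $\cG$ and a general $\omega\in\Omega_\cG$ we attach the projective closure of its coupler curve (the trajectory of vertex~$0$ with $1,2$ fixed), counted with the multiplicity of its parametrization by the one-dimensional realization set $\Xi_\cG^\omega$. As $\omega$ varies this produces an algebraic series of plane curves; blowing up $\P^2$ along the tower of infinitely near base points of the series and reading off the multiplicities of a general member at the cyclic, $1$-centric and $2$-centric base points yields a triple $(a,b,c)=:[\cG]$. Well-definedness is the standard constancy of such multiplicities on a general member of an algebraic series. Axiom~\AXM{1} follows by inspection: the coupler curves of $\cL$ and $\cR$ are single circles centered at vertices~$1$ and~$2$, and that of $\cC_v$ is a pair of circles with general centers, giving $(1,1,0)$, $(1,0,1)$, $(2,0,0)$. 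For Axiom~\AXM{2}, when $(\cG,\cG')$ is a calligraphic split a general point of vertex~$0$ lying on both coupler curves extends to exactly (parametrization multiplicity of $\cG$)\,$\times$\,(parametrization multiplicity of $\cG'$) realizations of $\cG\cup\cG'$; hence $\cc(\cG\cup\cG')$ equals the intersection number in $\P^2$ of the two weighted coupler curves, and pulling this back to the common resolution and discarding the part absorbed by the exceptional curves over the shared base points rewrites it as $2(aa'-bb'-cc')=[\cG]\cdot[\cG']$, the negative signs recording the two levels of infinitely near structure over the cyclic points. This is the content of \PRP{pseudo}, which assumes $\cG$ and $\cG'$ to be \emph{centric} --- that no base points other than the cyclic, $1$-centric and $2$-centric ones occur.

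\textbf{The main obstacle.} What remains, and is the heart of the matter, is to prove that every calligraph is centric. Following \SEC{op}, one performs the blow-up and base-point analysis symbolically on the quadratic system $(x_i-x_j)^2+(y_i-y_j)^2=\omega(\{i,j\})^2$ cutting out $\Xi_\cG^\omega$: an infinitely near base point manifests itself as a prescribed change of variables under which the leading forms of the equations degenerate in a controlled way, and this yields necessary conditions for centricity phrased entirely through such symbolic moves. One then argues by induction on the way a calligraph is built up from $\cL$, $\cR$ and the $\cC_v$ by gluings and Henneberg-type extensions, invoking \PRP{vert}, to verify that no step can create a base point of a different type. Carrying out this induction --- excluding the hypothetical ``extra'' base points at each step --- is the technical core of \SEC{proof}; once it is complete, \PRP{pseudo} applies to all calligraphs and existence is established.

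\textbf{Uniqueness.} Let $[\cdot]$ be any function obeying \AXM{1} and \AXM{2}. Call a calligraph \emph{admissible} if it contains neither of the edges $\{0,1\},\{0,2\}$. A calligraph cannot contain both, since then $\cG\cup\cC_v$ would carry six edges on the four vertices $\{0,1,2,v\}$, contradicting $|\ee(\cH)|\le 2|\vv(\cH)|-3$ for minimally rigid graphs; hence every calligraph is admissible or contains exactly one of the two edges. Given a calligraph $\cG$, choose $v\in\Z_{\ge3}\setminus\vv(\cG)$: by the definition of a calligraph $\cG\cup\cC_v$ is minimally rigid, and since every non-marked edge of $\cC_v$ meets $v$, the pair $(\cG,\cC_v)$ is a calligraphic split. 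More generally, as every non-marked edge of an admissible calligraph $\cH$ meets a vertex outside $\{0,1,2\}$, after relabeling the finitely many interior vertices of $\cH$ to avoid $\vv(\cG)$ the pair $(\cG,\cH)$ is a calligraphic split; this relabeling changes neither $\cc$ nor, as will follow, the class, which on admissible calligraphs is a graph invariant. Evaluating $[\cC_v],[\cL],[\cR]$ by \AXM{1}, each split $(\cG,\cH)$ contributes through \AXM{2} one linear equation $[\cG]\cdot[\cH]=\cc(\cG\cup\cH)$ for the unknown $[\cG]\in\Z^3$. If $\cG$ is admissible, the splits with $\cC_v$, $\cL$, $\cR$ yield the three linear forms $(a,b,c)\mapsto 4a,\ 2(a-b),\ 2(a-c)$, which are linearly independent, so these equations determine $[\cG]$ uniquely. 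Apply this in particular to the admissible five-vertex calligraph $\cG_2^{\ast}$ obtained from a rigid triangle on three vertices $\{0,p,q\}$ by adding the edges $\{p,1\},\{q,1\},\{1,2\}$, and write $[\cG_2^{\ast}]=(a_2,b_2,c_2)$: since $\cG_2^{\ast}\cup\cL$ has six edges on $\{0,1,p,q\}$ it has no realizations, so \AXM{2} gives $2(a_2-b_2)=0$, while $\cG_2^{\ast}\cup\cC_v$ is minimally rigid, so $a_2=\tfrac{1}{4}\cc(\cG_2^{\ast}\cup\cC_v)>0$; thus $b_2=a_2>0$. The mirror calligraph $\cG_3^{\ast}$ (joining $p,q$ to vertex~$2$ instead of~$1$) likewise has $c_3>0$. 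Finally, if $\cG$ contains $\{0,1\}$ then $\{0,2\}\notin\ee(\cG)$, so the splits with $\cC_v$, $\cR$, $\cG_2^{\ast}$ are available and give the forms $(a,b,c)\mapsto 4a,\ 2(a-c),\ 2(aa_2-bb_2-cc_2)$, whose determinant equals $-16\,b_2\ne0$; hence $[\cG]$ is determined. The case $\{0,2\}\in\ee(\cG)$ is symmetric, with $\cG_3^{\ast}$ in place of $\cG_2^{\ast}$. As every calligraph falls under one of these three cases, $[\cdot]$ is unique.
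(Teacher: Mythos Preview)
Your high-level split into existence (via \PRP{pseudo} plus centricity) and uniqueness (linear algebra from \AXM{1}, \AXM{2}) matches the paper, but the two halves need different feedback.

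\textbf{Existence.} Your summary of \PRP{pseudo} is fine, but your description of the centricity proof is wrong: the paper does \emph{not} proceed ``by induction on the way a calligraph is built up from $\cL$, $\cR$ and the $\cC_v$ by gluings and Henneberg-type extensions.'' Instead, \PRP{cases} reformulates centricity as eight conditions on operator compositions $B\circ G\circ\cdots\circ\mu(\E)$, and \LEMS{lem:case2,lem:case134,lem:case5678} verify these \emph{uniformly for every calligraph at once} by direct inspection of the explicit polynomials listed in \TAB{eqn}. \PRP{vert} enters as a one-shot structural lemma inside \LEM{case5678}, showing that $V(P\cup\{x_0\})$ is a union of linear subspaces whose $\pi$-images are already constrained to $V(x_0)$, $V(x_0,y_0)$ or $V(x_0,y_0+\ii)$; there is no inductive step. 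If you actually attempted a Henneberg induction here you would have to track how each move interacts with the whole infinitely-near base-point tower, exactly the bookkeeping the paper's direct approach avoids.

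\textbf{Uniqueness.} Here your argument is genuinely more careful than the paper's and in fact patches a small gap. The paper (end of the proof of \PRP{pseudo}) derives uniqueness from \AXM{1}, \AXM{2} \emph{and} \PPP{3} via \ALG{class}; but \PPP{3} is only established for the pseudo class, not for an arbitrary class satisfying \AXM{1}, \AXM{2}. When $\{0,1\}\in\ee(\cG)$ the pair $(\cG,\cL)$ is not a calligraphic split, and the remaining equations from $\cR$ and $\cC_v$ determine only $a',c'$, leaving $b'$ unconstrained. Your auxiliary admissible calligraph $\cG_2^\ast$ with $b_2=a_2>0$ supplies the missing independent linear form, and your bootstrap---first pin down $[\cdot]'$ on admissible calligraphs via $\cL,\cR,\cC_v$, then use those now-known values as coefficients for the non-admissible ones---closes the argument purely from \AXM{1} and \AXM{2}. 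The observation that a calligraph cannot contain both $\{0,1\}$ and $\{0,2\}$, and the determinant $-16\,b_2\neq 0$, are correct. One phrasing to tighten: ``has no realizations'' should read ``has $\cc=0$'', since the over-constrained subgraph on $\{0,1,p,q\}$ makes a generic edge-length assignment inconsistent, giving $\cc(\cG_2^\ast\cup\cL)=0$ rather than~$\infty$.
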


The class~$[\cG]$ of a calligraph~$\cG$ is uniquely
determined by the numbers $\cc(\cG\cup\cL)$,
$\cc(\cG\cup\cR)$ and $\cc(\cG\cup\cC_v)$ for some $v\notin\vv(\cG)$ (see forward \ALG{class}).
We see in \COR{class} that the class
reveals key properties about coupler curves.
However, for this we need to introduce in \SEC{coupler} some additional concepts.

\subsection{Invariants of coupler curves}
\label{sec:coupler}

In this article we assume that curves are real algebraic, namely the complex solution sets of polynomial equations
with real coefficients.
If $C\subset \C^2$ is a curve, then we denote by $\deg C$ its \df{degree}.
If $C$ is also irreducible,
then $g(C)$ denotes its \df{geometric genus} (see for example \cite{miranda}).
The \df{singular locus} of~$C$ is denoted by~$\sng C$.
We call $C$ a \df{circle} if $\deg C=2$ and $C\cap \R^2$ is a circle.

The \df{coupler curve} of a calligraph~$\cG$
\wrt edge length assignment $\omega\in\Omega_\cG$ is defined as
follows (``t'' stands for ``trajectory''):
\[
\tt_\omega(\cG):=\set{\xi(0)}{\xi\in \Xi_\cG^\omega}.
\]
We call $(T_1,\ldots,T_n)$ its \df{coupler decomposition}
if $\tt_\omega(\cG)=T_1\cup\cdots \cup T_n$
and $T_i$ is an irreducible curve for all $1\leq i\leq n$.
For example, in \FIG{intro} we see that the coupler curve of $\cC_3$ for some choice of
edge length assignment consist of two circles $T_1$ and~$T_2$.
The \df{coupler multiplicity} of~$\cG$ is defined as
\[
\mm(\cG):=|\set{\xi\in \Xi_\cG^\omega}{\xi(0)=p}|,
\]
where both $\omega\in\Omega_\cG$ and $p\in \tt_\omega(\cG)$ are general.
In other words, $\mm(\cG)$ equals the number of realizations sending
vertices $1$, $2$ and $0$ to $(0,0)$, $(1,0)$ and $p$, \resp.
General coupler curves are indeed curves by \LEM{dim}\ref{lem:dim:d} and
the coupler multiplicity is well-defined by \LEM{mult}.

In \FIG{tm} we consider the real points of the coupler curves and the coupler multiplicities of three
calligraphs $\cL$, $\cM$ and $\cQ$,
for some general choice of edge length assignments
$\widetilde{\omega}\in \Omega_\cL$, $\omega\in \Omega_\cM$ and $\hat{\omega}\in \Omega_\cQ$.
Each coupler curve is a circle centered at~$(0,0)$
and the points $p\in \tt_\omega(\cM)$ and $q\in \tt_{\hat{\omega}}(\cQ)$ are assumed to be general.
We see that $\mm(\cM)=2$, since
there exist two realizations $\xi,\xi'\in\Xi_\cM^{\omega}$ \st $\xi(0)=\xi'(0)=p$,
where $\xi'(3)$ is obtained by flipping $\xi(3)$ along the edge~$\{1,2\}$.
Similarly, $\mm(\cQ)=2$,
as there exist two real realizations $\xi,\xi'\in\Xi_\cQ^{\hat{\omega}}$ \st $\xi(0)=\xi'(0)=q$, where
$\xi'(3)$ is obtained by reflecting~$\xi(3)$ along the line spanned by~$\{0,2\}$.
The thick arcs on the coupler curve of $\cQ$ correspond to the
subset $\set{\xi(0)}{\xi\in \Xi_\cQ^{\hat{\omega}} \text{ and } \xi(0),\xi(3)\in\R^2}$,
namely the real trace of vertex~$0$ when we consider the graph as a moving mechanism.

\begin{figure}[!ht]
\centering
\setlength{\tabcolsep}{4mm}
\begin{tabular}{ccc}
\begin{tikzpicture}[scale=0.8]
\draw[traj] \pl circle [radius=1.414];
\Iedge \pl \pf;
\Aedge \pl \pr;
\AvertN \pl 1;
\AvertN \pr 2;
\MvertN \pf 0;
\end{tikzpicture}
&
\begin{tikzpicture}[scale=0.8]
\draw[traj] \pl circle [radius=1.414];
\Iedge \pl \pf;
\Iedge \pl \pv;
\Iedge \pr \pv;
\Aedge \pl \pr;
\AvertN \pl 1;
\AvertN \pr 2;
\MvertN \pf 0;
\IvertE \pv 3;
\node[labelsty,right] at \pf {$p$};
\end{tikzpicture}
&
\begin{tikzpicture}[scale=0.8]
\draw[traj] \pl circle [radius=1.414];
\draw[colvm,line width=2pt] ($({1.414*cos(30)},{1.414*sin(30)})$) arc (30:110:1.414);
\draw[colvm2,line width=2pt] ($({1.414*cos(250)},{1.414*sin(250)})$) arc (250:330:1.414);
\Iedge \pl \pf;
\Iedge \pr \pz;
\Iedge \pf \pz;
\Aedge \pl \pr;
\AvertN \pl 1;
\AvertS \pr 2;
\MvertN \pf 0;
\IvertE \pz 3;
\node[labelsty,below] at \pf {$q$};
\end{tikzpicture}
\\
$\mm(\cL)=1$ & $\mm(\cM)=2$ & $\mm(\cQ)=2$
\end{tabular}
\caption{Three calligraphs $\cL$, $\cM$ and $\cQ$ together with their coupler curves and coupler multiplicities.
For each $r\in \tt_{\hat{\omega}}(\cQ)$ that lies on one of the two thick arcs
there exists $\xi\in\Xi_\cQ^{\hat{\omega}}$ \st $\xi(0)=r$ and $\xi(3)$ is real .
}
\label{fig:tm}
\end{figure}

A connected graph is called
\df{$k$-vertex connected} if it has more than $k$ vertices and remains connected
whenever fewer than $k$ vertices are removed.
We call a calligraph~$\cG$ \df{thin}, if $\cG':=\cG\cup\cL\cup\cR$ is 3-vertex connected
and $\cG'$ minus the edge~$\{v_1,v_2\}$ is minimally rigid for all edges~$\{v_1,v_2\}\in\ee(\cG')$.
For example, the calligraph $\cH$ in \FIG{HH} is thin.
We remark that if $\cG$ is thin, then $\cG\cup\cL\cup\cR$ is ``generic globally rigid''
(see \citep[Corollary~1.7]{rigid-con} or \citep[Theorem~5.1]{rigid-jack-2018})
and we shall see in \COR{class} that this property characterizes coupler multiplicity of~$\cG$.

We call $(\alpha_1,\ldots,\alpha_n)$ a \df{class partition} for a calligraph~$\cG$ if
$\alpha_i=(\alpha_{i0},\alpha_{i1},\alpha_{i2})\in \Z^3$ \st
$\alpha_{i0}\geq \alpha_{i1}\geq0$, $\alpha_{i0}\geq \alpha_{i2}\geq 0$
and
$\alpha_1+\cdots+\alpha_n=\tfrac{1}{m}\cdot[\cG]$ for all $1\leq i\leq n$,
where $m$ denotes the coupler multiplicity of $\cG$.

\begin{maincorollary}
\label{cor:class}
The following six properties are satisfied
for all calligraphs $\cG$ and $\cG'$
and almost all edge length assignments~$\omega\in\Omega_\cG$ and $\omega'\in\Omega_{\cG'}$,
where we use the following notation:
\[
T:=\tt_\omega(\cG),\qquad
m:=\mm(\cG),\qquad
T':=\tt_{\omega'}(\cG'),\qquad
m':=\mm(\cG').
\]
\begin{enumerate}[topsep=0mm,itemsep=0mm,label=P\arabic*.,ref=P\arabic*]
\item\label{ppp:1}
If $[\cG]=(a_0,a_1,a_2)$, then $a_0\geq a_1\geq 0$ and $a_0\geq a_2\geq0$.
\item\label{ppp:2}
If $\cG$ is thin, then $\mm(\cG)=1$.
\item\label{ppp:3}
If $\cG\cup\cL$ is not minimally rigid, then $[\cG]=(m,m,0)$.\\
If $\cG\cup\cR$ is not minimally rigid, then $[\cG]=(m,0,m)$.
\item\label{ppp:4}
If $(\cG,\cG')$ is a calligraphic split, then
$m\cdot m'\cdot |T\cap T'|=[\cG]\cdot [\cG']$.
\item\label{ppp:5}
If $[\cG]=(a_0,a_1,a_2)$, then $\deg T=2a_0/m$.
\item\label{ppp:6}
If $(T_1,\ldots,T_n)$ is a coupler decomposition for $T$,
then there exists a class partition~$(\alpha_1,\ldots,\alpha_n)$ for $\cG$ \st for all $1\leq i\leq n$ we have
\\
$\deg T_i=\alpha_i\cdot(1,0,0)$ and
$g(T_i) \leq \tfrac{1}{2}\cdot\alpha_i\cdot\bigl(\alpha_i-(2,1,1)\bigr)+1-|\sng T_i|$.
\end{enumerate}
\end{maincorollary}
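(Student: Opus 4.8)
The plan is to deduce the six properties from \THM{class} together with the divisor-theoretic description of classes set up in \SEC{bp}--\SEC{proof}. Recall that to a (centric) calligraph one associates the rational surface $X$ obtained by blowing up $\P^2$ at the cyclic points $I,\bar I$ at infinity and at the infinitely near $1$-centric points $F_1,\bar F_1$ and $2$-centric points $F_2,\bar F_2$, and that the family $\{\tt_\omega(\cG):\omega\in\Omega_\cG\}$ becomes a (sub)linear system on $X$ whose general member, the strict transform of a general coupler curve $T$, is a curve of degree $2a_0/m$ in $\P^2$ with multiplicity $a_0/m$ at $I$ (and $\bar I$), $a_1/m$ at $F_1$ (and $\bar F_1$) and $a_2/m$ at $F_2$ (and $\bar F_2$), where $m=\mm(\cG)$; this is what Axioms~\AXM{1}, \AXM{2} and \THM{class} encode once \PRP{pseudo} and the centricity of arbitrary calligraphs (\SEC{proof}) are in hand. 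Granting this dictionary, P5 is just the degree formula $\deg T=2a_0/m$, and P1 is the statement that multiplicities are nonnegative together with the elementary fact that the multiplicity of a curve at a point infinitely near $P$ cannot exceed its multiplicity at $P$; both inequalities survive multiplication by $m$.

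For P6, Bertini applied to the system on $X$ shows that a general $T$ has an ordinary multiple point at each of the six base points (so these are resolved by the single blow-ups defining $X$) and no other singularity in common, whence every singular point of a general component $T_i$, other than the base points at infinity, already lies in $\sng T_i\subset\C^2$. Writing $T=T_1\cup\cdots\cup T_n$ for the coupler decomposition, $\tT_i$ for the strict transform of $T_i$ on $X$, and $\alpha_i\in\Z^3$ for the triple of multiplicities of $\tT_i$ at $I$, $F_1$, $F_2$ (nonnegative, and bounded by $\alpha_{i0}$ in the last two slots as in P1), additivity of degrees and of multiplicities gives $\deg T_i=2\alpha_{i0}=\alpha_i\cdot(1,0,0)$ and $\alpha_1+\cdots+\alpha_n=\tfrac1m[\cG]$, so $(\alpha_1,\ldots,\alpha_n)$ is a class partition. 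The genus estimate is then adjunction on $X$: a direct computation of the arithmetic genus of $\tT_i$ (dropping $\binom{\alpha_{ij}}{2}$ for each blow-up at $I,\bar I,F_1,\bar F_1,F_2,\bar F_2$ from $\binom{2\alpha_{i0}-1}{2}$) yields $p_a(\tT_i)=\tfrac12\,\alpha_i\cdot(\alpha_i-(2,1,1))+1$, and since $\tT_i$ is smooth at the base points, $g(T_i)=g(\tT_i)=p_a(\tT_i)-\sum_{P\in\sng T_i}\delta_P\le p_a(\tT_i)-|\sng T_i|$, which is exactly P6.

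Properties P4, P2 and P3 I would argue more directly. For P4, since $\ee(\cG)\cap\ee(\cG')=\{\{1,2\}\}$ and vertices $1,2$ are pinned, $\Omega_{\cG\cup\cG'}=\Omega_\cG\times\Omega_{\cG'}$ and a realization of $\cG\cup\cG'$ is precisely a pair $(\xi,\xi')\in\Xi_\cG^\omega\times\Xi_{\cG'}^{\omega'}$ with $\xi(0)=\xi'(0)$; for general $\omega,\omega'$ the curves $T$ and $T'$ share no component (else $\cc(\cG\cup\cG')=\infty$, impossible for a minimally rigid graph), so $\xi(0)$ ranges over the finite set $T\cap T'$, a general point of each, contributing $mm'$ pairs per point, whence $m\,m'\,|T\cap T'|=\cc(\cG\cup\cG')=[\cG]\cdot[\cG']$ by Axiom~\AXM{2}. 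For P2, a thin $\cG$ has $\{0,1\},\{0,2\}\notin\ee(\cG)$ and $\cG^+:=\cG\cup\cL\cup\cR$ is generically globally rigid; since $\vv(\cG^+)=\vv(\cG)$, a $\cG^+$-framework is a $\cG$-framework with prescribed values of $|\xi(0)-\xi(1)|$ and $|\xi(0)-\xi(2)|$, and the fibre of the measurement map of $\cG^+$ over a general point of its (hypersurface) image is $\{\xi\in\Xi_\cG^\omega:\xi(0)\in\{p,\bar p\}\}$, where $p,\bar p$ are the two intersection points of the two prescribed circles about vertices $1,2$, hence has $2\,\mm(\cG)$ elements; generic global rigidity forces it to be a single congruence class, of size at most two (the only congruences fixing $\xi(1),\xi(2)$ being the identity and reflection in their line), so $\mm(\cG)=1$. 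For P3, if $\cG\cup\cL$ is not minimally rigid, the Laman count forces vertices $0$ and $1$ to be rigidly linked inside $\cG$, so for general $\omega$ vertex $0$ lies on the circle about vertex $1$ of the relevant radius, which — being irreducible — equals $T$; thus $\deg T=2$ and P5 gives $a_0=m$. Vertex $0$ cannot also be rigidly linked to vertex $2$ (else it would be pinned and draw no curve), so $(\cG,\cR)$ is a calligraphic split, and comparing Axiom~\AXM{2} ($\cc(\cG\cup\cR)=2(a_0-a_2)$) with P4 ($\cc(\cG\cup\cR)=m\cdot 1\cdot|T\cap T_\cR|=2m$) gives $a_2=0$; that $a_1=m$ then reflects the identification of the system of circles about vertex $1$, counted $m$ times, with $m\cdot[\cL]$, which is built into the construction of the class. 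The $\cR$-case is symmetric.

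The step I expect to be the real obstacle is the genericity input behind P6: showing that a general coupler curve, lifted to $X$, is as general as an effective divisor in its class can be — ordinary multiple points at the assigned base points, no hidden infinitely near base points, reduced components meeting transversally — so that adjunction produces a genuine upper bound for $g(T_i)$. This is precisely where the centricity of arbitrary calligraphs proved in \SEC{proof} (and, before it, the reduction to centric calligraphs in \PRP{pseudo}) is indispensable, and where a bare application of Bertini must be supplemented by the explicit structure of the series; the remaining work is bookkeeping with intersection numbers on $X$ and the enumerative count used for P4.
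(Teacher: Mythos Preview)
Your approach is essentially the paper's: both deduce P1--P6 from the divisor-theoretic description of the pseudo class on the blowup of~$\P^2$ at the cyclic, 1-centric and 2-centric points (the paper's~$Y$, your~$X$), combined with centricity (\SEC{proof}) and the global-rigidity input for P2. Your routing of P4 through Axiom~\AXM{2} (already granted by \THM{class}) rather than through a direct intersection computation is legitimate and slightly shorter than the paper's, which proves \AXM{2} and P4 in one stroke via the transversality \LEM{trans}.

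One point to flag: your Bertini step in P6, asserting that a general coupler curve has \emph{ordinary} multiple points at the six base points so that $\tT_i$ is smooth there, is both unjustified and unnecessary. It is unjustified because the family of coupler curves is not a linear system (edge lengths enter quadratically), so classical Bertini does not apply; indeed the paper explicitly leaves the corresponding statement open as \CON{class}. It is unnecessary because the bound $g(T_i)\le p_a(\tT_i)-|\sng T_i|$ needs only $\delta_p\ge 1$ for each $p\in\sng\tT_i$ together with the injection of $\sng T_i\subset\C^2$ into $\sng\tT_i$, both of which hold regardless of whether the base-point singularities are ordinary. So the ``real obstacle'' you identify at the end is not actually there. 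A smaller remark: the paper's P3 is one line---once $\cG\cup\cL$ fails to be minimally rigid, the coupler curve is literally a circle about vertex~$1$, so $[\cG]=m\cdot[\cL]=(m,m,0)$ directly from the pseudo-class definition, without the detour through P5 and $\cc(\cG\cup\cR)$.
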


Before we continue let us mention some conjectures and open problems for the reader
to keep in mind as we clarify \THM{class}, \COR{class} and its applications in \SEC{mainapp}.
The proof of our main results are prepared in \SEC{bp}, \SEC{op} and \APP{lines}, and concluded in~\SEC{proof}.

\subsection{Conjectures and open problems}
\label{sec:problems}

The following conjecture states that for almost all edge length assignments
each component of a coupler curve has the same degree and geometric genus.

\begin{conjecture}
\label{con:class}
In \PPP{6} we additionally have for all $1\leq i,j\leq n$:
\[
\deg T_i=\deg T_j
\quad\text{and}\quad
g(T_i)=g(T_j)=\tfrac{1}{2}\cdot\alpha_i\cdot\bigl(\alpha_i-(2,1,1)\bigr)+1-|\sng T_i|.
\]
\end{conjecture}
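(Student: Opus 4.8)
The plan is to derive the first two equalities, $\deg T_i=\deg T_j$ and $g(T_i)=g(T_j)$, from a Galois/monodromy argument, and the sharpened genus formula from a genericity analysis of the singularities of $T_i$. The engine for the first part is the irreducibility of the total space of realizations over $\Omega_\cG$, whence also that of the total space of coupler curves. Set $Z:=\set{\xi\c\vv(\cG)\to\C^2}{\xi(1)=(0,0),\ \xi(2)=(1,0)}\cong\C^{2|\vv(\cG)|-4}$ and
\[
W:=\set{(\omega,\xi)\in\Omega_\cG\times Z}{\xi\in\Xi_\cG^\omega},
\]
cut out by the equations $\omega(e)^2=d_e^2(\xi)$, $e\in\ee(\cG)$, with $d_e^2(\xi)=(x_i-x_j)^2+(y_i-y_j)^2$ for $e=\{i,j\}$ (the case $e=\{1,2\}$ being automatic). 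Each $\omega(e)$ is integral over $\C[Z]$, so $W\to Z$ is finite, and it is surjective of degree $2^{|\ee(\cG)|-1}$; hence $\dim W=\dim Z$, $W$ is a complete intersection of the expected codimension in $\Omega_\cG\times Z$, and every component of $W$ dominates $Z$. The monodromy of the covering $W\to Z$ contains each sign change $\omega(e)\mapsto-\omega(e)$ (the monodromy around a component of the reduced branch locus $\{d_e^2=0\}$ negates $\omega(e)$ and fixes the remaining coordinates), so it is the full group of sign changes, which acts transitively on the sheets; therefore $W$ is irreducible. Since $W\to\Omega_\cG$ is dominant with one-dimensional fibres, the generic fibre $\Xi_K$ over $K:=\C(\Omega_\cG)$ is $K$-irreducible, and so is its image, the generic coupler curve $T_K\subset\P^2_K$.

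I would then descend to a general fibre. Writing $T_{\bar K}=\Theta_1\cup\cdots\cup\Theta_n$ for the decomposition over an algebraic closure of $K$, the $K$-irreducibility of $T_K$ makes $\operatorname{Gal}(\bar K/K)$ act transitively on the $\Theta_i$, so these curves have pairwise equal degree, geometric genus, number of singular points, and multiplicities at the cyclic, $1$-centric and $2$-centric points. By the standard relationship between a general closed fibre and the geometric generic fibre of a projective family, for almost all $\omega$ the curve $\tt_\omega(\cG)$ has exactly $n$ components, matched to the $\Theta_i$ by a bijection preserving all these invariants. This yields $\deg T_i=\deg T_j$ and $g(T_i)=g(T_j)$ for general $\omega$, and shows the class partition in \PPP{6} can be chosen with all $\alpha_i$ equal, hence equal to $\tfrac1{nm}\cdot[\cG]$, where the vector $\alpha_i$ records the (Galois-invariant) multiplicities of $T_i$ at the three base points.

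It remains to show the inequality in \PPP{6} is an equality for general $\omega$. Unwinding its proof, the right-hand side is $p_a(\tilde T_i)-|\sng T_i|$, where $\tilde T_i$ is the strict transform of $T_i$ on the surface resolving the cyclic, $1$-centric and $2$-centric base points and $p_a(\tilde T_i)=\tfrac12\alpha_i\cdot(\alpha_i-(2,1,1))+1$ by adjunction; equality with $g(T_i)$ holds exactly when $\tilde T_i$ is nodal, i.e. when (i) $T_i$ meets the three base points with precisely the multiplicities recorded by $\alpha_i$ and no further infinitely near tangency, and (ii) every affine singular point of $T_i$ is an ordinary node. Condition (i) is the generic base-point behaviour already established in \SEC{bp} and \SEC{proof}, since the class computes the base-point structure of a general curve in the algebraic series of $\cG$. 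Condition (ii) is the crux, and I expect it to be the main obstacle: a priori the combinatorics of $\cG$ could force a worse singularity (a tacnode, a triple point, $\ldots$) on some component of every coupler curve. I would attack it by a Bertini-type transversality argument for the family $W$: a non-nodal affine singularity of $T_i$ forces a second-order coincidence among the $m$ realizations lying above it, and differentiating the defining quadrics turns ``this persists for every $\omega$'' into the assertion that a certain auxiliary subvariety of $\Omega_\cG$ is all of $\Omega_\cG$; ruling this out — presumably by exhibiting, for each calligraph, one edge length assignment whose coupler curve has only nodal affine singularities, built recursively along the calligraphic splits used to prove \THM{class} — is what the conjecture ultimately rests on.
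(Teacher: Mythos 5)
The statement you are addressing is \CON{class}, which the paper does not prove: it is left open, and the only guidance the paper gives is the remark immediately after it that the genus equality would follow if every isolated singularity of the coupler curve has delta invariant one. So there is no proof of the paper to compare against, and your proposal does not close the conjecture either — you say so yourself in the final sentence. The first half of your argument (irreducibility of the incidence variety $W$ over $Z$ via the sign-change monodromy, $K$-irreducibility of the generic realization curve and hence of the generic coupler curve over $K=\C(\Omega_\cG)$, transitivity of the Galois action on its geometric components, then specialization to almost all $\omega$) is a sensible route to $\deg T_i=\deg T_j$ and $g(T_i)=g(T_j)$ and looks essentially sound, modulo the spreading-out facts you only gesture at (constancy, over a dense open subset of $\Omega_\cG$, of the number of geometric components, of their irreducibility and geometric genus, and the identification of the fibre of the spread-out family with $\tt_\omega(\cG)$ for general $\omega$). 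If written out carefully this would already be genuine partial progress beyond what the paper establishes.

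The genuine gap is the second half, i.e.\ the equality $g(T_i)=\tfrac12\,\alpha_i\cdot(\alpha_i-(2,1,1))+1-|\sng T_i|$, which is precisely the point the paper flags as open. Two specific problems. First, your condition (i) is \emph{not} ``already established in \SEC{bp} and \SEC{proof}'': centricity and the class control the base points and multiplicities of the \emph{series} $\Gamma_\fH$, i.e.\ behaviour common to all curves in the family, but they say nothing about whether the strict transform $\tT_i$ of an individual general member is free of extra singular points or tangencies along the exceptional curves — such behaviour need not be a base-point phenomenon, so (i) is itself part of what must be proved. Second, condition (ii) is carried by a ``presumably'': a Bertini-type transversality argument is not available, because the coupler curves form a highly constrained, non-linear family with imposed base points whose general members demonstrably \emph{do} have singularities (see \FIG{tH}); what must be excluded is not singular points but singularities of delta invariant $\geq 2$ (note also that delta one allows cusps, so insisting on ordinary nodes is more than is needed), and nothing in your sketch does this. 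Exhibiting one good edge length assignment ``for each calligraph'' is an infinite family of verifications, and the proposed recursion along calligraphic splits is not set up — splitting controls intersection numbers of coupler curves, not the analytic type of their singularities. So the proposal reduces the conjecture to the same open nodality/delta-one statement the paper records, and does not prove it.
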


Note that the equality for the geometric genus follows if
the delta invariant of any isolated singularity of
the coupler curve equals one (see the proof of \PRP{pseudo}).
Even if this conjecture is confirmed, we still do not know how to determine
efficiently the number $|\sng T_1|$ of singularities of an irreducible component~$T_1$
of the coupler curve, and the number $n$ of irreducible components.

\begin{openproblem}
For any given calligraph, determine the number of irreducible components of the coupler curve
and for each such component its degree and geometric genus.
\end{openproblem}

On the other hand,
we refer to \citep[Section~8]{rigid-jack-2018}
for open problems concerning the number of realizations of minimally rigid graphs.

\section{Applications of the main result}
\label{sec:mainapp}

In this section we shall consider two applications of our main result \THM{class}
and its \COR{class}.
\begin{Mlist}
\item
If $(\cG,\cG')$ is a calligraphic split, then
we reduce the computation of~$c(\cG\cup \cG')$
to the computation of the numbers of realizations of six minimally rigid graphs
with at most $\max(|\vv(\cG)|,|\vv(\cG')|)+1$ vertices.
If $(\cG,\cG')$ is non-trivial, then
we confirm experimentally that this reduction
leads to a significant speed up.

\item If $\cG$ is a calligraph with coupler curve~$T$ and coupler decomposition~$(T_1,\ldots, T_n)$,
then we determine $\deg T$ and give non-trivial bounds for $n$, $\deg T_i$ and $g(T_i)$
for all $1\leq i\leq n$.
Furthermore, we determine the number of complex intersections of coupler curves of two calligraphs.
\end{Mlist}
The first application is clarified in \SEC{H} and \APP{tree}.
The second application is clarified in \SEC{couplerH} and \SEC{large}.

\subsection{Counting the number of realizations using classes}
\label{sec:H}

Suppose we would like to determine the number of realizations~$c(\cU)$ of the minimally rigid graph~$\cU$
in~\FIG{UVW}.
We remove the vertex of degree 2 from $\cU$ and obtain $\cV$.
It is straightforward to see that $c(\cU)=2\cdot c(\cV)$.

\begin{figure}[!ht]
\centering
\setlength{\tabcolsep}{5mm}
\begin{tabular}{ccc}
\begin{tikzpicture}[scale=0.8]
\Iedge \pl \pg;
\Iedge \pg \ph;
\Iedge \pg \pf;
\Iedge \ph \pf;
\Iedge \ph \pf;
\Iedge \ph \pr;
\Iedge \pl \pu;
\Iedge \pr \pv;
\Iedge \pu \pv;
\Iedge \pu \pf;
\Iedge \pv \pf;
\Iedge \pl \pA;
\Iedge \pA \pg;
\Iedge \pl \pr;
\IvertW \pl ~;
\IvertE \pr ~;
\IvertN \pg ~;
\IvertN \ph ~;
\IvertS \pf ~;
\IvertW \pu ~;
\IvertE \pv ~;
\IvertN \pA ~;
\end{tikzpicture}
&
\begin{tikzpicture}[scale=0.8]
\Iedge \pl \pg;
\Iedge \pg \ph;
\Iedge \pg \pf;
\Iedge \ph \pf;
\Iedge \ph \pf;
\Iedge \ph \pr;
\Iedge \pl \pu;
\Iedge \pr \pv;
\Iedge \pu \pv;
\Iedge \pu \pf;
\Iedge \pv \pf;
\Iedge \pl \pr;
\IvertW \pl ~;
\IvertE \pr ~;
\IvertN \pg ~;
\IvertN \ph ~;
\IvertS \pf ~;
\IvertW \pu ~;
\IvertE \pv ~;
\end{tikzpicture}
&
\begin{tikzpicture}[scale=0.8]
\Iedge \pl \pg;
\Iedge \pg \ph;
\Iedge \pg \pf;
\Iedge \ph \pf;
\Iedge \ph \pf;
\Iedge \ph \pr;
\Iedge \pl \pu;
\Iedge \pr \pv;
\Iedge \pu \pv;
\Iedge \pu \pf;
\Iedge \pv \pf;
\Aedge \pl \pr;
\AvertW \pl 1;
\AvertE \pr 2;
\IvertN \pg 3;
\IvertN \ph 4;
\MvertS \pf 0;
\IvertW \pu 5;
\IvertE \pv 6;
\end{tikzpicture}
\\
$\cU$ & $\cV$ &
\end{tabular}
\caption{$c(\cU)=2\cdot c(\cV)$ and thus it is sufficient to compute $c(\cV)$.}
\label{fig:UVW}
\end{figure}

Since $\cV$ does not contain any degree 2 vertices,
we are now going to apply Axioms~\AXM{1} and~\AXM{2}.
For this we need to find a non-trivial calligraphic split for $\cV$.
First we choose an edge and a vertex in $\cV$.
After relabeling the vertices
we obtain the rightmost marked graph in \FIG{UVW} \st $\{1,2\}$ and $0$
are the chosen edge and vertex, \resp.
We consider the subgraph of~$\cV$ induced by $\vv(\cV)\setminus\{0,1,2\}$
as is illustrated in \FIG{HH}.
This subgraph consists of two components defined by the edges~$\{3,4\}$ and $\{5,6\}$.
These components define the calligraphs $\cH$ and~$\cI$ which are induced by the vertices $\{3,4\}\cup\{0,1,2\}$ and $\{5,6\}\cup\{0,1,2\}$, \resp.
We verify that $(\cH,\cI)$ is a non-trivial calligraphic split for~$\cV$.

\begin{figure}[!ht]
\centering
\setlength{\tabcolsep}{3mm}
\begin{tabular}{ccc}
\begin{tikzpicture}[scale=0.8]
\DIedge \pl \pg;
\Iedge \pg \ph;
\DIedge \pg \pf;
\DIedge \ph \pf;
\DIedge \ph \pf;
\DIedge \ph \pr;
\DIedge \pl \pu;
\DIedge \pr \pv;
\Iedge \pu \pv;
\DIedge \pu \pf;
\DIedge \pv \pf;
\DAedge \pl \pr;
\DAvert \pl;
\DAvert \pr;
\DMvert \pf;
\IvertN \pg 3;
\IvertN \ph 4;
\IvertW \pu 5;
\IvertE \pv 6;
\end{tikzpicture}
&
\begin{tikzpicture}[scale=0.8]
\Iedge \pl \pg;
\Iedge \pg \ph;
\Iedge \pg \pf;
\Iedge \ph \pf;
\Iedge \ph \pf;
\Iedge \ph \pr;
\Aedge \pl \pr;
\AvertW \pl 1;
\AvertE \pr 2;
\IvertN \pg 3;
\IvertN \ph 4;
\MvertS \pf 0;
\markerE \pv 6;
\end{tikzpicture}
&
\begin{tikzpicture}[scale=0.8]
\Iedge \pl \pu;
\Iedge \pr \pv;
\Iedge \pu \pv;
\Iedge \pu \pf;
\Iedge \pv \pf;
\Aedge \pl \pr;
\AvertW \pl 1;
\AvertE \pr 2;
\MvertN \pf 0;
\IvertW \pu 5;
\IvertE \pv 6;
\end{tikzpicture}
\\
$\cV=\cH\cup\cI$ & $\cH$ & $\cI$
\end{tabular}
\caption{$(\cH,\cI)$ is a non-trivial calligraphic split for $\cV$.}
\label{fig:HH}
\end{figure}

Our next goal is to determine the class of~$[\cH]$.
We first compute the number of realizations for the minimally rigid graphs
in \FIG{unions} using the algorithm described in \citep[Section~5]{rigid-alg}.
It follows from Axioms \AXM{1} and \AXM{2} that $[\cH]\cdot [\cL]=[\cH]\cdot [\cR]=8$ and $[\cH]\cdot [\cC_8]=24$,
where $[\cL]=(1,1,0)$, $[\cR]=(1,0,1)$ and $[\cC_8]=(2,0,0)$.
We solve the resulting system of linear equations, where the three entries of $[\cH]$
are the indeterminates, and obtain
\[
[\cH]=(6,2,2).
\]
In this particular example $\cH$ and $\cI$ are isomorphic as calligraphs, so $[\cH]=[\cI]$.
However, in general we need to compute $c(\cI\cup\cL)$, $c(\cI\cup\cR)$ and $c(\cI\cup\cC_8)$
as well and thus in total the numbers of realizations of six graphs with at most $|\vv(\cH)|+1$ vertices.
We conclude from Axiom \AXM{2} that
\[
c(\cU)=2\cdot c(\cV)=2\cdot c(\cH\cup\cI)=2\cdot [\cH]\cdot [\cI]=112.
\]

\begin{figure}[!ht]
\centering
\setlength{\tabcolsep}{5mm}
\begin{tabular}{ccc}
\begin{tikzpicture}[scale=0.8]
\Iedge \pl \pg;
\Iedge \pg \ph;
\Iedge \pg \pf;
\Iedge \ph \pf;
\Iedge \ph \pf;
\Iedge \ph \pr;
\Iedge \pl \pf;
\Aedge \pl \pr;
\AvertW \pl 1;
\AvertE \pr 2;
\IvertN \pg 3;
\IvertN \ph 4;
\MvertS \pf 0;
\end{tikzpicture}
&
\begin{tikzpicture}[scale=0.8]
\Iedge \pl \pg;
\Iedge \pg \ph;
\Iedge \pg \pf;
\Iedge \ph \pf;
\Iedge \ph \pf;
\Iedge \ph \pr;
\Iedge \pr \pf;
\Aedge \pl \pr;
\AvertW \pl 1;
\AvertE \pr 2;
\IvertN \pg 3;
\IvertN \ph 4;
\MvertS \pf 0;
\end{tikzpicture}
&
\begin{tikzpicture}[scale=0.8]
\Iedge \pl \pg;
\Iedge \pg \ph;
\Iedge \pg \pf;
\Iedge \ph \pf;
\Iedge \ph \pf;
\Iedge \ph \pr;
\Iedge \pl \pe;
\Iedge \pr \pe;
\Iedge \pe \pf;
\Aedge \pl \pr;
\AvertW \pl 1;
\AvertE \pr 2;
\IvertN \pg 3;
\IvertN \ph 4;
\IvertN \pe 8;
\MvertS \pf 0;
\end{tikzpicture}
\\
$\cH\cup\cL$
&
$\cH\cup\cR$
&
$\cH\cup\cC_8$
\end{tabular}
\caption{$c(\cH\cup\cL)=c(\cH\cup\cR)=8$ and $c(\cH\cup\cC_8)=24$ }
\label{fig:unions}
\end{figure}

\begin{remark}[algorithms]
\label{rmk:alg}
Our example works in general and can be translated into \ALG{class} and \ALG{nor}.
We shall refer to the algorithm described in \citep[Section~5]{rigid-alg}
for computing numbers of realizations by ``Algorithm~\cite{rigid-alg}''.
A graph can be determined to be minimally rigid in polynomial time
in the number of vertices~\cite{PebbleGame}.
We find calligraphic splits for a minimally rigid graph~$\cU$
by trying out each pair consisting of an edge and a vertex (there are less than $4\cdot\vv(\cU)^2$ possibilities).
In order to determine whether a calligraphic split~$(\cA,\cB)$ is non-trivial in \ALG{nor},
it is sufficient to verify that $|\vv(\cA)|\geq|\vv(\cB)|\geq 5$.
In this case $\cA,\cB\notin\{\cL,\cR,\cC_v\}$ for all $v\in\Z_{\geq 0}$
and thus \ALG{class} and \ALG{nor} do not end up in an infinite recursion.
Hence, the correctness of \ALG{class} and \ALG{nor} is a straightforward consequence of \AXM{1}, \AXM{2} and \PPP{3}.
In particular, each linear system of equations in \ALG{class} always has a unique solution.
See \APP{tree} for an example of the recursive execution tree of \ALG{nor}.
\END
\end{remark}

\begin{algorithm}[!ht]
\footnotesize
\caption{\texttt{getClass}}
\label{alg:class}
  \vspace*{02mm}$\bullet$ \textbf{Input.} A calligraph~$\cG$.
\\\hspace*{00mm}$\bullet$ \textbf{Output.} Its class~$[\cG]$.
\\\hspace*{00mm}$\bullet$ \textbf{Method.}
\\\hspace*{00mm}\textbf{if}\quad$\cG\cup\cL$ is not minimally rigid\quad\textbf{then}
\\\hspace*{05mm}\textbf{return}\quad$[\cG]:=(m,m,0)$\quad where\quad $m:=\frac{1}{2}\cdot\texttt{getNoR}(\cG\cup\cR)$.
\\\hspace*{00mm}\textbf{if}\quad$\cG\cup\cR$ is not minimally rigid\quad\textbf{then}
\\\hspace*{05mm}\textbf{return}\quad$[\cG]:=(m,0,m)$\quad where\quad $m:=\frac{1}{2}\cdot\texttt{getNoR}(\cG\cup\cL)$.
\\\hspace*{00mm}Determine $[\cG]$ by solving the linear system:
$\Bigl\{\texttt{getNoR}(\cG\cup\cF)=[\cG]\cdot [\cF]:\cF\in\{\cL,\cR,\cC_v\}\Bigr\}$.
\\\hspace*{00mm}\textbf{return} $[\cG]$
\end{algorithm}

\begin{algorithm}[!ht]
\footnotesize
\caption{\texttt{getNoR}}
\label{alg:nor}
  \vspace*{02mm}$\bullet$ \textbf{Input.} A minimally rigid graph~$\cU$.
\\\hspace*{00mm}$\bullet$ \textbf{Output.} The number of realizations~$\cc(\cU)$.
\\\hspace*{00mm}$\bullet$ \textbf{Method.}
\\\hspace*{00mm}\textbf{if}\quad $v\in\vv(\cU)$ has degree 2\quad\textbf{then}
\\\hspace*{05mm}\textbf{return}\quad $2\cdot\texttt{getNoR}(\cV)$
\quad where $\cV$ is obtained from $\cU$ by removing $v$.
\\\hspace*{00mm}Find a non-trivial calligraphic split $(\cA,\cB)$ for $\cU$.
\\\hspace*{00mm}\textbf{if}\quad no such split was found\quad\textbf{then}
\\\hspace*{05mm}\textbf{return}\quad $\cc(\cU)$\quad where $\cc(\cU)$ is computed using Algorithm~\cite{rigid-alg}.
\\\hspace*{00mm}\textbf{return}\quad $\texttt{getClass}(\cA)\cdot\texttt{getClass}(\cB)$
\end{algorithm}

\begin{remark}[experimental results]
\label{rmk:timing}
We call a graph \df{splittable} if it is a minimally rigid graph that admits a non-trivial calligraphic split.
Notice that if an input graph~$\cU$ is splittable, then \ALG{nor} reduces the computation of $\cc(\cU)$
to the computation of the numbers of realizations of six minimally rigid graphs with at most~$|\vv(\cU)|-1$ vertices.
To our best knowledge Algorithm~\cite{rigid-alg} is currently the
fastest symbolic algorithm for counting complex realizations.
We still expect that Algorithm~\cite{rigid-alg} is super exponential in~$|\vv(\cU)|$,
and indeed \FIG{timing} confirms heuristically that \ALG{nor} leads to a significant speed up
for splittable graphs~$\cU$ \st $|\vv(\cU)|\geq 15$.
In \FIG{chart} we charted the relative number of splittable minimally rigid graphs~$\cU$
\st $7\leq |\vv(\cU)|\leq 12$.
We used the \texttt{C++} version of Algorithm~\cite{rigid-alg} as a fallback
for graphs that are either unsplittable or have less than 13 vertices.
We remark that if instead we use the slower \textsc{Mathematica} version of Algorithm~\cite{rigid-alg},
then our \textsc{Mathematica} implementation of \ALG{nor} is also faster for splittable graphs with less than 13 vertices.
The implementations of the algorithms can be found at \cite{CalligraphsWL}.
\END
\end{remark}

\begin{figure}[ht]
\centering
\begin{tikzpicture}[yscale=1.2]
\draw[->] (12,0) -- (12,2.5) node[left,align=left,font=\small] {\%};
\draw[->] (12,0) -- (20,0) node[below right,font=\small] {\#vertices};
\foreach \y [evaluate=\y as \yc using \y/100] in {0,50,100,150,200} \draw (12,\yc) -- +(-0.05,0) node[left,font=\small] {$\y$};
\foreach \y in {0.25,0.75,...,2} \draw (12,\y) -- +(-0.05,0);
\foreach \y in {0.5,1,1.5,2} \draw (12,\y) -- +(-0.1,0);
\foreach \x in {13,14,...,19} \draw (\x,0) -- +(0,-0.005) node[below,font=\small] {$\x$};
\fill[colG] (13-0.25,0) -- (13-0.25,2) -- (13-0.1,2.15)--(13+0.1,1.85) -- (13+0.25,2) -- (13+0.25,0) --cycle;
\draw[black!40!white,dotted] (13-0.3,1) -- (13+0.3,1);
\foreach \x [count=\i from 14] in {1.69394, 0.928441, 0.560054, 0.391343, 0.285858, 0.20246}
{
    \fill[colG] (\i-0.25,0) rectangle (\i+0.25,\x);
    \draw[black!40!white,dotted] (\i-0.3,1) -- (\i+0.3,1);
}
\end{tikzpicture}
\caption{The timing for \ALG{nor} relative to Algorithm~\cite{rigid-alg}
using a set of 100 pseudo random graphs that are splittable.
The $100\%$ line represents the timing for Algorithm~\cite{rigid-alg}.
We see that \ALG{nor} is faster for splittable graphs with at least 15 vertices.
}
\label{fig:timing}
\end{figure}

\begin{figure}[!ht]
\centering
\begin{tikzpicture}[yscale=3]
\draw[->] (6,0) -- (6,1.2) node[left,align=left,font=\small] {\%};
\draw[->] (6,0) -- (13,0) node[below right,font=\small] {\#vertices};
\foreach \y [evaluate=\y as \yc using \y/100] in {0,50,100} \draw (6,\yc) -- +(-0.05,0) node[left,font=\small] {$\y$};
\foreach \y in {0.25,0.75} \draw (6,\y) -- +(-0.05,0) node[left,font=\small] {$~$};
\foreach \y in {0.5,1} \draw (6,\y) -- +(-0.1,0);
\foreach \x in {7,8,9,10,11,12} \draw (\x,0) -- +(0,-0.005) node[below,font=\small] {$\x$};
\coordinate (a7) at (0,0.75);
\coordinate (xs) at (0.25,0);
\fill[colG] ($(7,0)-(xs)$) rectangle ($(7,0)+(a7)+(xs)$);
\fill[colGray] ($(7,0)+(a7)-(xs)$) rectangle ($(7,0)+(0,1)+(xs)$);
\coordinate (a8) at (0,0.65625);
\fill[colG] ($(8,0)-(xs)$) rectangle ($(8,0)+(a8)+(xs)$);
\fill[colGray] ($(8,0)+(a8)-(xs)$) rectangle ($(8,0)+(0,1)+(xs)$);
\coordinate (a9) at (0,0.689394);
\fill[colG] ($(9,0)-(xs)$) rectangle ($(9,0)+(a9)+(xs)$);
\fill[colGray] ($(9,0)+(a9)-(xs)$) rectangle ($(9,0)+(0,1)+(xs)$);
\coordinate (a10) at (0,0.659141);
\fill[colG] ($(10,0)-(xs)$) rectangle ($(10,0)+(a10)+(xs)$);
\fill[colGray] ($(10,0)+(a10)-(xs)$) rectangle ($(10,0)+(0,1)+(xs)$);
\coordinate (a11) at (0,0.621291);
\fill[colG] ($(11,0)-(xs)$) rectangle ($(11,0)+(a11)+(xs)$);
\fill[colGray] ($(11,0)+(a11)-(xs)$) rectangle ($(11,0)+(0,1)+(xs)$);
\coordinate (a12) at (0,0.574376);
\fill[colG] ($(12,0)-(xs)$) rectangle ($(12,0)+(a12)+(xs)$);
\fill[colGray] ($(12,0)+(a12)-(xs)$) rectangle ($(12,0)+(0,1)+(xs)$);
\end{tikzpicture}
\caption{Amount of splittable graphs \wrt all minimally rigid graphs
whose vertices are of degree at least 3 (thus each vertex belongs to at least 3 edges).
If the number of vertices is at most~$12$, then at least 50\% of such graphs is splittable.}
\label{fig:chart}
\end{figure}

\subsection{Invariants of coupler curves and their intersections}
\label{sec:couplerH}
Suppose that
we would like to determine the degrees and geometric genera of the components
of the coupler curve of the calligraph~$\cH$ defined in~\FIG{HH}.

The equation of the coupler curve of $\cH$
can be determined explicitly and was done in \citep[Section~7.3]{hunt}.
Hence, its components, degrees and intersections with
other coupler curves can be computed in a straightforward manner.
However, for larger calligraphs this quickly becomes infeasible and therefore
we present an alternative method that works in purely combinatorial manner
without the need to perform computations in polynomial rings.
We assume that we can obtain an arbitrary good approximation
of the real part of the coupler curve of a calligraph for some given edge length assignment.
In practice this means that we have a drawing of the coupler curve as for example in~\FIG{tH}.

We apply \ALG{class} and find that~$[\cH]=(6,2,2)$.
In \FIG{tH} we illustrate the coupler curve~$T:=\tt_\omega(\cH)$ for edge length assignment~$\omega\in\Omega_\cH$.
We observe from this approximation that $T$ admits the
coupler decomposition $(T_1,T_2)$ \st $|\sng T_1|,|\sng T_2|\geq 3$
and $\deg T_1,\deg T_2\geq 6$.
As $\cH$ is thin and $\deg T=12$ by \PPP{2} and \PPP{5}, we find that $\deg T_1=\deg T_2=6$.
By \PPP{6} there exists a class partition~$(\alpha_1,\alpha_2)$
that is equal to either
$\bigl((3,0,0),(3,2,2)\bigr)$,
$\bigl((3,2,0),(3,0,2)\bigr)$,
$\bigl((3,1,2),(3,1,0)\bigr)$ or
$\bigl((3,1,1),(3,1,1)\bigr)$.
If $\alpha_1\in\{(3,0,0),(3,2,0),(3,1,2)\}$, then $g(T_1)<0$, which is impossible.
Therefore, $\alpha_1=\alpha_2=(3,1,1)$ so that for $i\in\{1,2\}$ we have
\[
0\leq g(T_i)\leq 4-|\sng T_i|\leq 1.
\]

\begin{figure}[!t]
\centering
\setlength{\tabcolsep}{10mm}
\begin{tabular}{cc}
\begin{tikzpicture}
\node[inner sep=0pt] at (1.6,0) {\includegraphics{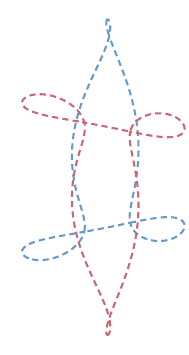}};
\Iedge \pl \pg;
\Iedge \pg \ph;
\Iedge \pg \pf;
\Iedge \ph \pf;
\Iedge \ph \pf;
\Iedge \ph \pr;
\Aedge \pl \pr;
\AvertS \pl 1;
\AvertS \pr 2;
\IvertN \pg 3;
\IvertN \ph 4;
\MvertS \pf 0;
\end{tikzpicture}
&
\begin{tikzpicture}
\node[inner sep=0pt] at (1.6,0) {\includegraphics{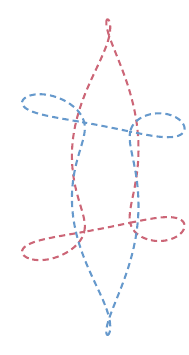}};
\Iedge \pl \pg;
\Iedge \pg \ph;
\Iedge \pg \pF;
\Iedge \ph \pF;
\Iedge \ph \pF;
\Iedge \ph \pr;
\Aedge \pl \pr;
\AvertS \pl 1;
\AvertS \pr 2;
\IvertN \pg 3;
\IvertS \ph 4;
\MvertE \pF 0;
\end{tikzpicture}
\end{tabular}
\caption{The coupler curve~$T$ of~$\cH$ consists
of two components $T_1$ and $T_2$ \st $\deg T_1=\deg T_2=6$, $0\leq g(T_1)\leq 1$ and $0\leq g(T_2)\leq 1$.}
\label{fig:tH}
\end{figure}

We illustrate in \FIG{TC},
for some choice of edge length assignment, the intersections of the coupler curves $T$ and $T'$ of
the calligraphs $\cH$ and $\cC_8$, \resp.
The coupler multiplicities are equal to one in both cases by \PPP{2}
and thus it follows from \AXM{1}, \AXM{2} and \PPP{4} that
\[
|T\cap T'|=[\cH]\cdot [\cC_8]=(6,2,2)\cdot(2,0,0)=24=c(\cH\cup\cC_8).
\]

\begin{figure}[!h]
\centering
\setlength{\tabcolsep}{8mm}
\begin{tabular}{ccc}
\begin{tikzpicture}
\node[inner sep=0pt] at (1.6,0) {\includegraphics{img/trajH.pdf}};
\Iedge \pl \pg;
\Iedge \pg \ph;
\Iedge \pg \pf;
\Iedge \ph \pf;
\Iedge \ph \pf;
\Iedge \ph \pr;
\Aedge \pl \pr;
\AvertS \pl 1;
\AvertS \pr 2;
\IvertN \pg 3;
\IvertN \ph 4;
\MvertS \pf 0;
\end{tikzpicture}
&
\begin{tikzpicture}
\draw[traj2] \pe circle [radius=1.11803];
\draw[traj] \pE circle [radius=1.11803];
\Iedge \pl \pe;
\Iedge \pr \pe;
\Iedge \pe \pf;
\Aedge \pl \pr;
\AvertN \pl 1;
\AvertN \pr 2;
\IvertN \pe 8;
\MvertN \pf 0;
\markerN{(0,-2.67)}{~}
\end{tikzpicture}
&
\begin{tikzpicture}
\node[inner sep=0pt] at (1.6,0) {\includegraphics{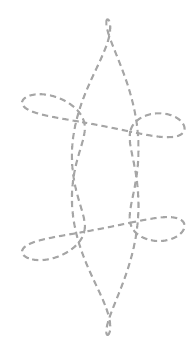}};
\draw[traj2] \pe circle [radius=1.11803];
\draw[traj] \pE circle [radius=1.11803];

\def\cb{colvmf2}
\def\cm{colvmf}
\foreach \p in {(1.33826,-0.401165), (1.47194,1.48547), (2.14618,1.60844), (2.2,-0.6), (1.,1.), (1.17048,1.2496), (1.21884,-0.299866), (2.34376,-0.563876), (3.,1.), (3.09104,0.744205)}
{
	\draw[draw=black, opacity=0.7, fill=\cb] \p   circle [radius=0.1];
}
\foreach \p in {(1.,-1.), (1.17048,-1.2496), (1.21884,0.299866), (2.34376,0.563876), (3.,-1.), (3.09104,-0.744205), (1.33826,0.401165), (1.47194,-1.48547), (2.14618,-1.60844), (2.2,0.6)}
{
	\draw[draw=black, opacity=0.7, fill=\cm] \p   circle [radius=0.1];
}
\end{tikzpicture}
\end{tabular}
\caption{The $20$ real intersections between the coupler curves of
$\cH$ and $\cC_8$ in $\C^2$.
There are $[\cH]\cdot[\cC_8]=24$ complex intersections in total.}
\label{fig:TC}
\end{figure}

We observe that 20 of the complex intersection points in $T\cap T'$ are real.
See \citep[Figure~5]{rigid-num-borcea} for an example where all 24 intersections are real.

\subsection{Invariants of a calligraph with 10 vertices}
\label{sec:large}

Let us consider the calligraph $\cF$ as defined in \FIG{G2G3}.
Suppose that $\omega\in\Omega_{\cF}$ is a general edge length assignment
and let $T=\tt_\omega(\cF)$ be a coupler curve of the calligraph~$\cF$
with coupler decomposition~$(T_1,\ldots,T_n)$.
Our goal is to determine $\deg T$ and a lower bound for~$n$.
Moreover, we determine upper bounds for the degrees and the geometric genera of the irreducible components~$T_1,\ldots,T_n$.

\begin{figure}[!ht]
\centering
\setlength{\tabcolsep}{8mm}
\begin{tabular}{@{}cc@{}}
\begin{tikzpicture}[scale=0.4]
\BUedge \Qbe \Qbd;
\BUedge \Qbe \Qcd;
\BUedge \Qaa \Qbb;
\BUedge \Qaa \Qkk;
\BUedge \Qea \Qbb;
\BUedge \Qea \Qkk;
\BUedge \Qaa \Qbc;
\BUedge \Qbc \Qbd;
\BUedge \Qbc \Qcc;
\BUedge \Qkk \Qjj;
\BUedge \Qbb \Qcc;
\BUedge \Qcc \Qjj;
\BUedge \Qjj \Qcd;
\BUedge \Qcd \Qbd;
\BUedge \Qjj \Qbd;
\Aedge \Qaa \Qea;
\BMvertN \Qbe 0;
\BAvertS \Qaa 1;
\BAvertS \Qea 2;
\BIvertS \Qbb \
\BIvertE \Qkk \
\BIvertN \Qcc \
\BIvertE \Qjj \
\BIvertW \Qbc \
\BIvertW \Qbd \
\BIvertN \Qcd \
\end{tikzpicture}
&
\begin{tikzpicture}[scale=0.4]
\BVedge \Qbe \Qbd;
\BVedge \Qbe \Qcd;
\BVedge \Qaa \Qbb;
\BVedge \Qaa \Qkk;
\BVedge \Qea \Qbb;
\BVedge \Qea \Qkk;
\BUedge \Qaa \Qbc;
\BUedge \Qbc \Qbd;
\BUedge \Qbc \Qcc;
\BUedge \Qkk \Qjj;
\BUedge \Qbb \Qcc;
\BUedge \Qcc \Qjj;
\BVedge \Qjj \Qcd;
\BVedge \Qcd \Qbd;
\BVedge \Qjj \Qbd;
\Aedge \Qaa \Qea;
\BMvertN \Qbe 0;
\BAvertS \Qaa 1;
\BAvertS \Qea 2;
\BIvertS \Qbb 3;
\BIvertE \Qkk 4;
\BIvertN \Qcc 5;
\BIvertE \Qjj 6;
\BIvertW \Qbc 7;
\BIvertW \Qbd 8;
\BIvertN \Qcd 9;
\end{tikzpicture}
\end{tabular}
\caption{The calligraph $\cF$ with
$[\cF]=(272,0,0)$ illustrated with the two largest minimally rigid subgraphs
that contain vertex~$0$ and edge~$\{1,2\}$, \resp.}
\label{fig:G2G3}
\end{figure}

We apply \ALG{class} and find that
$[\cF]=(272,0,0)$, where $272=2^4\cdot 17$.
We verify that $\cF$ is thin and thus $\deg T=2\cdot 272=544$ by \PPP{2} and \PPP{5}.

The largest minimally rigid subgraph $\cF$ that
contains the vertex 0 is induced by the vertices $\{0,6,8,9\}$.
Let $\cF'$ be defined by this graph, but with vertices $6$ and $8$ relabeled to $1$ and $2$, \resp,
so that $\cF'$ is marked.
The largest minimally rigid subgraph of~$\cF''\subset\cF$ that contains the edge~$\{1,2\}$
is induced by $\{1,2,3,4\}$.

It is straightforward to see that it is not possible to continuously move vertex 0 of $\cF$ while fixing
vertices 1, 2, 3 and 4 and arrive at a position where vertex~0 is flipped along the edge $\{8,6\}$
or $\{8,9\}$.
With \df{flipping} a vertex along an edge we mean
that we reflect the vertex along the line that is spanned by this edge,
and in this manner obtain another realization of~$\cF$.
Such an almost everywhere continuous movement, without flipping any vertices along the way,
defines a component of the coupler curve~$T$
(we believe that such a component must be irreducible).
Since $\cc(\cF')=4$ and $\cc(\cF'')=4$ we obtain in this manner~$16$ possibly reducible components
and their union coincides with the coupler curve.
Thus, we established the following lower bound for the number of irreducible components:~$n\geq 16$.

By \PPP{5} the degree of a coupler curve is uniquely determined by the class of a calligraph
and therefore does not depend on the starting position of the vertices before we move the graph.
From this we deduce that the components that are related by flipping a vertex in our example are of equal degree.
It follows that each of the obtained possibly reducible components is of degree~$d$.
Since $\deg T=544=16\cdot d$ we find that $d=34$.
We established an upper bound for the degrees of irreducible components:
$\deg T_i\leq 34$ for all $1\leq i\leq n$.

In order to determine an upper bound for the geometric genera of the irreducible components,
we apply \PPP{6}: there exists a class partition~$(\alpha_1,\dots,\alpha_n)$
\st $\alpha_i=(t,0,0)$ for some $t\leq 17$
so that for all $1\leq i\leq n$ the following holds:
\[
g(T_i)\leq \tfrac{1}{2}\cdot (t,0,0)\cdot (t-2,-1,-1)+1\leq 17\cdot 15+1=256.
\]
In comparison, the geometric genus of a curve of degree 34
is at most $(34-1)(34-2)/2=528$ by the genus formula at~\citep[Exercise~I.7.2b]{har}.

\section{Base points and pseudo classes of calligraphs}
\label{sec:bp}

If we vary the edge length assignments for a given calligraph~$\cG$,
then we obtain a set of coupler curves.
The set of defining polynomials of these coupler curves
can be represented in terms of a single polynomial~$F$ with coefficients in the edge lengths.
After embedding the coupler curves of~$\cG$ into the projective plane,
we observe that all their Zariski closures pass through certain complex points at infinity.
In this section we show how to recover from $F$ such base points and their multiplicities
in terms of substitutions and polynomial quotients.
We conclude this section by proving \PRP{pseudo},
which states that the 3-tuple consisting of multiplicities at three distinguished base points
is under a certain hypothesis a class, and that we can recover
from such classes invariants of coupler curves.

Suppose that $\fF\subset\C[x,y]$ is a subset of polynomials.
In this article, we assume that
$\fF=\set{F(c_1,\ldots,c_n,x,y)}{c_1,\ldots,c_n\in \C}\subset\C[x,y]$
for some polynomial $F\in\C[u_1,\ldots,u_n,x,y]$.
We say that $\fF$ has a \df{base point} at $p:=(x_p,y_p)$ in $\C^2$ if $f(p)=0$ for all $f\in\fF$.
This base point is called \df{$m$-fold} if
for all $f\in \fF$
the lowest-degree monomial term of~$f(x+x_p,y+y_p)$
has degree $\geq m$
and there exists $g\in\fF$ \st
the lowest-degree monomial term of~$g(x+x_p,y+y_p)$
has degree $m$.

\begin{definition}
\label{def:ab}
Let $\alpha_p^m,\beta_p^m\c\C[x,y]\to\C[x,y]$ for $p:=(x_p,y_p)\in\C^2$ and $m\in\Z_{\geq 0}$ be the maps
\[\alpha_p^m(f):=f(x+x_p,yx+y_p)\div x^m\qquad\text{and}\qquad\beta_p^m(f):=f(xy+x_p,y+y_p)\div y^m,\]
where $f\div g$ for polynomials $f,g\in\C[x,y]$ denotes the \df{polynomial quotient}.
\END
\end{definition}

A base point $q:=(x_q,y_q)\in\C^2$
is \df{infinitely near} to an
$m$-fold base point $p$ of $\fF$
if either
\begin{Mlist}
\item $q$ is a base point of $\alpha_p^m(\fF)$ \st $x_q=0$, or
\item $q$ is a base point of $\beta_p^m(\fF)$ \st $x_q=y_q=0$.
\end{Mlist}
Similarly, we define base points that are infinitely near to $q$ and so on.
See forward \RMK{blowup} for an algebro geometric interpretation of
infinitely near base points.

We denote by $\ii$ the \df{imaginary unit}.

\begin{example}
\label{exm:bp}
Let $\fF:=\set{(1-x)^2+y^2-\ell^2x^2 }{\ell\in\C}$ be a subset of
polynomials in $\C[x,y]$.
The base points $p:=(0,\ii)$ and $\overline{p}:=(0,-\ii)$ of $\fF$ are both 1-fold.
For determining whether $p$ admits infinitely near base points we consider
\begin{align*}
\alpha_p^1(\fF)&=\set{-2 + x + 2\ii y + x y^2 - \l^2 x}{\ell\in \C}
\quad\text{and}\quad
\\\beta_p^1(\fF)&=\set{2\ii - 2x + y + x^2y - \l^2 x^2 y}{\ell\in \C}.
\end{align*}
We find that
$q:=(0,-\ii)$
is the 1-fold base point of $\alpha_p^1(\fF)$.
Notice that the complex conjugate $\overline{q}=(0,\ii)$ is the 1-fold base point of
$\alpha_{\overline{p}}^1(\fF)$.
We verify that both $\alpha_q^1\circ\alpha_p^1(\fF)$ and
$\beta_q^1\circ\alpha_p^1(\fF)$ are base point free
and thus we identified all base points of $\fF$.
\END
\end{example}

The \df{projective plane}~$\P^2$ is defined as $\bigl(\C^3\setminus\{(0,0,0)\}\bigr)/\sim$,
where
\[
(z_0:z_1:z_2)\sim (\lambda\,z_0:\lambda\,z_1:\lambda\,z_2)
\qquad\text{for all}\qquad
\lambda\in\C\setminus\{0\}.
\]
If $h\in\C[z_0,z_1,z_2]$ is homogeneous, then we denote by $V(h)\subset \P^2$ its \df{zero set}.
The \df{line at infinity} is defined as $V(z_0)$.
We define $\gamma_0,\gamma_1,\gamma_2\c\C[z_0,z_1,z_2]\to\C[x,y]$ as
\[
\gamma_0(h):=h(1,x,y),\qquad
\gamma_1(h):=h(x,1,y)\qquad\text{and}\qquad
\gamma_2(h):=h(y,x,1).
\]
The embeddings $\gamma^*_0,\gamma^*_1,\gamma^*_2\c\C^2\hookrightarrow\P^2$ are defined as
\[
\gamma^*_0(x,y):=(1:x:y),\qquad
\gamma^*_1(x,y):=(x:1:y)\qquad\text{and}\qquad
\gamma^*_2(x,y):=(y:x:1).
\]
Suppose that $\fH\subset\C[z_0,z_1,z_2]$ is a subset of homogeneous polynomials
of the same degree.
The \df{series associated to}~$\fH$
is defined as the following set of projective curves:
\[
\Gamma_\fH:=\set{V(h)\subset\P^2}{h\in \fH}.
\]
We call $r:=(r_0:r_1:r_2)\in \P^2$ a \df{base point} of the series~$\Gamma_\fH$ if
$r=\gamma_j^*(p)$ for some $p:=(p_x,p_y)$ \st either
\begin{Mlist}
\item $j=0$ and $p$ is a base point of $\gamma_0(\fH)$,
\item $j=1$ and $p$ is a base point of $\gamma_1(\fH)$ \st $p_x=0$, or
\item $j=2$ and $p$ is a base point of $\gamma_2(\fH)$ \st $p_x=p_y=0$.
\end{Mlist}
If $q\in \C^2$ is a base point that is infinitely near to $p$,
then we say that $q$ is also \df{infinitely near to}~$r$.
We call $r\in\P^2$ \df{cyclic} if it is equal to either $(0:1:\ii)$ or $(0:1:-\ii)$.

Suppose that $r\in\P^2$ is a base point of the series~$\Gamma_\fH$
\st $r=\gamma_1^*(p)$, where
$p=(0,\pm\ii)$ is an $m$-fold base point of $\gamma_1(\fH)$,
and $q\in \C^2$ is a base point of $\alpha_p^m\circ\gamma_1(\fH)$.
In this case we call
\begin{Mlist}
\item $r$ a \df{cyclic} base point of~$\Gamma_\fH$,
\item $q$ a \df{1-centric} base point of~$\Gamma_\fH$ if $q=(0,0)$, and
\item $q$ a \df{2-centric} base point of~$\Gamma_\fH$ if $q=(0,\mp\ii)$ \st $y_p=-y_q$.
\end{Mlist}
Notice that 1-centric and 2-centric base points are infinitely near to a cyclic base point.
We call the series $\Gamma_\fH$ \df{centric} if all its base points are either cyclic, 1-centric
or 2-centric, and if almost all curves in the series meet the
line at infinity only at the cyclic points.

\begin{example}
\label{exm:hbp}
Let $\fH:=\set{(z_1-z_0)^2+z_2^2-\ell^2\,z_0^2 }{\ell\in\C}$. Then
\begin{align*}
\gamma_0(\fH)&=\set{(x-1)^2+y^2-\ell^2}{\ell\in\C},\quad\\
\gamma_1(\fH)&=\set{(1-x)^2+y^2-\ell^2\,x^2}{\ell\in\C},\\
\gamma_2(\fH)&=\set{(x-y)^2+1-\ell^2\,y^2}{\ell\in\C}.
\end{align*}
We observe that $\gamma_0(\fH)$ is base point free and defines a pencil of
circles in $\C^2$ that are centered around $(1,0)$.
We deduce from \EXM{bp} that
$\gamma_1(\fH)$ has two complex conjugate base points
whose $\gamma_1^*$-images in $\P^2$ are cyclic,
and that the complex conjugate infinitely near base points are 2-centric.
Finally, we check that $(0,0)$ is not a base point of $\gamma_2(\fH)$.
Thus there are in total four complex base points and each of them
is $1$-fold.
A conic in the series $\Gamma_\fH$ meets the line
at infinity at exactly the two cyclic points,
and thus we established that $\Gamma_\fH$ is centric.
\END
\end{example}

Suppose that $\cG$ is a calligraph. We consider the embedding of its coupler curves into
the projective plane, namely we
let $T_\omega\subset\P^2$ be the Zariski closure of the curve~$\gamma_0^*(\tt_\omega(\cG))$.
Define $\fH\subset\C[z_0,z_1,z_2]$ as the set of square free homogeneous polynomials
\st
\[
\set{V(h)\subset\P^2}{h\in \fH}=\set{T_\omega\subset\P^2}{\omega\in \Omega_\cG}.
\]
We shall see in \LEM{dim}\ref{lem:dim:c} that $\fH$ can be represented similarly as in \EXM{hbp},
namely in terms of a square free homogeneous polynomial in $z_0$, $z_1$ and $z_2$,
whose coefficients are polynomials in the variables for the edge lengths.
The series~$\Gamma_\fH$ associated to~$\fH$ is called the
\df{series associated to the calligraph}~$\cG$.
We call $\cG$ \df{centric} if its associated series~$\Gamma_\fH$ is centric.

\begin{definition}
\label{def:pclass}
The \df{pseudo class} for calligraphs assigns to each calligraph~$\cG$ the tuple
\[
[\cG]:=(m\,a,m\,b,m\,c)\in \Z^3,
\]
where $m$ equals the coupler multiplicity~$\mm(\cG)$
and where the series $\Gamma_\fH$ associated to~$\cG$ has
an $a$-fold cyclic base point,
a $b$-fold 1-centric base point, and a $c$-fold 2-centric base point.
If $\Gamma_\fH$ has no cyclic, 1-centric or 2-centric base points,
then $a=0$, $b=0$ and $c=0$, \resp.
\END
\end{definition}

\begin{remark}
\label{rmk:real}
If a cyclic base point of the series associated to a calligraph has multiplicity~$n$,
then both cyclic base points must have multiplicity~$n$. This statement also holds
if we replace ``cyclic'' with ``1-centric'' or ``2-centric''.
The reason is that for real edge length assignments,
the coupler curves of a calligraph
are real and thus the multiplicities at complex conjugate base points
of the associated series must be equal (see \EXM{bp}).
\END
\end{remark}

\begin{example}
\label{exm:R}
Suppose that $\Gamma_\fH$ is the series associated to the calligraph $\cR$ as
defined in \FIG{cal}.
The coupler multiplicity of $\cR$ is equal to $1$.
The coupler curves of $\cR$ define circles centered around
the point $(1,0)$ and thus $\fH$ is defined as in \EXM{hbp}.
Hence, $\Gamma_\fH$ is centric with two complex conjugate 1-fold cyclic base points,
and two complex conjugate 1-fold 2-centric base points.
We conclude that $\cR$ has pseudo class $(1,0,1)$.
\END
\end{example}

The following proposition shows that
the pseudo class is a class under the assumption that calligraphs are
centric. We show in \SEC{proof} that calligraphs are indeed centric.

\begin{proposition}
\label{prp:pseudo}
If calligraphs are centric, then the following holds:
\begin{Mlist}
\item
The coupler multiplicity and the pseudo class for calligraphs are well-defined.
\item
The pseudo class satisfies the Axioms \AXM{1}, \AXM{2}, and the six Properties \PPP{1}---\PPP{6}.
\item
A class for calligraphs must be the unique pseudo class.
\end{Mlist}
\end{proposition}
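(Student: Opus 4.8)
The plan is to realize the pseudo class as a divisor class on a fixed blow-up of $\P^2$ and to read off every clause from intersection theory on that surface, supplemented by two rigidity facts (a generic-fibre argument for coupler multiplicity, and generic global rigidity for \PPP{2}). First I would settle well-definedness: the coupler multiplicity is the generic fibre cardinality of the morphism $(\omega,\xi)\mapsto(\omega,\xi(0))$ from the incidence variety $\{(\omega,\xi):\omega\in\Omega_\cG,\ \xi\in\Xi_\cG^\omega\}$ to $\Omega_\cG\times\C^2$, which is generically finite precisely because the first projection of the incidence variety has one-dimensional fibres (that is, $\cG$ is a calligraph); and the cyclic, $1$-centric and $2$-centric base points of the series $\Gamma_\fH$ together with their multiplicities are intrinsic to $\cG$, complex-conjugate pairs carrying equal multiplicities by \RMK{real}. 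Then \AXM{1} is a direct computation: for $\cL$, resp.\ $\cR$, the coupler curve is the circle about vertex $1$, resp.\ vertex $2$, and for $\cC_v$ it is the union of the two circles about the two positions of $v$; in each case the coupler multiplicity is $1$ (for $\cC_v$ a general point of the coupler curve lies on exactly one circle, which recovers the position of $v$), and the base-point analysis of \EXM{bp} and \EXM{hbp} gives $[\cL]=(1,1,0)$, $[\cR]=(1,0,1)$, $[\cC_v]=(2,0,0)$.

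For \PPP{2} I would use that $\cG$ thin means $\cG\cup\cL\cup\cR$ is generically globally rigid. A realization $\xi\in\Xi_\cG^\omega$ with $\xi(0)=p$ is automatically a realization of $\cG\cup\cL\cup\cR$ whose two extra edges carry the lengths $|p|$ and $|p-(1,0)|$; since $\cG$ is a calligraph, the map from a framework of $\cG$ to its $\cG$-edge-lengths is dominant, so a framework arising from a general $\omega$ and a general $p\in\tt_\omega(\cG)$ is a general framework of $\cG\cup\cL\cup\cR$, and global rigidity then forces it to be the unique realization with those edge lengths fixing vertices $1,2$ and vertex $0$ at $p$ — the only competing isometry, reflection in the line $\{1,2\}$, being excluded since $p$ lies off that line. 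Hence $\mm(\cG)=1$.

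The core is \PPP{4} and \AXM{2}. For a calligraphic split $(\cG,\cG')$ and general $\omega,\omega'$, a realization of $\cG\cup\cG'$ with $1,2$ pinned restricts to realizations of $\cG$ and of $\cG'$ agreeing at vertex $0$; conversely each common point of the two coupler curves is, because the two series genuinely move that point as $\omega,\omega'$ vary, a general point of each, hence contributes $\mm(\cG)\,\mm(\cG')$ realizations, so $\cc(\cG\cup\cG')=\mm(\cG)\,\mm(\cG')\,|\tt_\omega(\cG)\cap\tt_{\omega'}(\cG')|$, which is \PPP{4}. To evaluate the intersection number I would pass to the surface $Y\to\P^2$ obtained by blowing up the two cyclic points and, over each, the (possibly present) $1$-centric and $2$-centric points; writing $\ell$ for the pulled-back line and $e_\bullet$ for the total transforms of the exceptional divisors, and using \PPP{5} (centricity forces the coupler curve to meet the line at infinity only at the cyclic points, each with multiplicity $a$, whence $\deg\tt_\omega(\cG)=2a$), the strict transform of a general coupler curve of $\cG$ with pseudo class $(ma,mb,mc)$ has class $2a\ell-a(e_{c_+}+e_{c_-})-b(e^1_++e^1_-)-c(e^2_++e^2_-)$, and likewise for $\cG'$. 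Centricity accounts for every intersection over the line at infinity, and a genericity argument shows general members of the two series share no further infinitely near point, so $|\tt_\omega(\cG)\cap\tt_{\omega'}(\cG')|$ equals the intersection product on $Y$ of these two classes, namely $2(aa'-bb'-cc')$; therefore $\cc(\cG\cup\cG')=mm'\cdot 2(aa'-bb'-cc')=[\cG]\cdot[\cG']$.

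The remaining clauses are short. \PPP{1}: $m\ge 1$, and multiplicity never increases at an infinitely near point, so $a\ge b\ge 0$ and $a\ge c\ge 0$; \PPP{5} was obtained above. \PPP{3}: if $\cG\cup\cL$ is not minimally rigid then Laman's criterion gives a minimally rigid subgraph of $\cG$ on a vertex set containing $0$ and $1$, so $|\xi(0)-\xi(1)|$ is constant on $\Xi_\cG^\omega$ and the coupler curve equals the circle about vertex $1$, whose series is that of $\cL$; hence $[\cG]=(m,m,0)$, symmetrically for $\cR$. \PPP{6}: each component $T_i$ of a general coupler curve also meets infinity only at the cyclic points, and additivity of multiplicities over components makes $\alpha_i:=\bigl(\operatorname{mult}_{c_+}T_i,\ \operatorname{mult}_{1\text{-centric}}T_i,\ \operatorname{mult}_{2\text{-centric}}T_i\bigr)$ a class partition with $\deg T_i=\alpha_i\cdot(1,0,0)$; the genus bound is adjunction on $Y$, $g(T_i)=p_a(\widetilde{T_i})-\delta(\widetilde{T_i})$ with $p_a(\widetilde{T_i})=\tfrac12\alpha_i\cdot\bigl(\alpha_i-(2,1,1)\bigr)+1$ read off from the displayed class and $\delta(\widetilde{T_i})\ge|\sng T_i|$. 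For uniqueness, when $\cG\cup\cL$ and $\cG\cup\cR$ are both minimally rigid the splits $(\cG,\cL)$, $(\cG,\cR)$, $(\cG,\cC_v)$ have minimally rigid unions and \AXM{2} together with \AXM{1} yields a linear system for $[\cG]$ of nonzero determinant; the other cases are covered by \PPP{3}; since the pseudo class satisfies \AXM{1}, \AXM{2} and \PPP{3}, any class equals it. I expect the main obstacle to be the identification of the affine intersection number of two coupler curves with the intersection product on $Y$: one must check that centricity genuinely captures every intersection on the line at infinity and, more delicately, that general edge length assignments do not create extra common infinitely near base points of the two series; the analogous real-structure bookkeeping in \PPP{6}, controlling the degrees of non-real components, is the other place where care is needed.
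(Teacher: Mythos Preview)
Your overall architecture matches the paper's: both set up the blow-up $Y$ of $\P^2$ at the cyclic, $1$-centric and $2$-centric points, identify the strict transform of a general coupler curve with the class $2a\,\e_0-a\,\e_1-b\,\e_2-c\,\e_3$, and read off \AXM{1}, \PPP{1}--\PPP{6} and uniqueness from intersection theory and adjunction on~$Y$; your treatment of \PPP{2} via generic global rigidity is exactly the paper's \LEM{glob}, and your derivation of $\cc(\cG\cup\cG')=m\,m'\,|T\cap T'|$ is the same. The minor differences (you argue well-definedness of $\mm(\cG)$ directly as a generic fibre degree, whereas the paper deduces it a posteriori from the formula for $\cc(\cG\cup\cG')$; you invoke the proximity inequality for \PPP{1}, whereas the paper intersects with the $(-2)$-classes $\e_1-\e_2$, $\e_1-\e_3$) are cosmetic.

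The one genuine gap is the step you yourself flag as ``the main obstacle'': passing from the intersection product on $Y$ to the \emph{cardinality} $|T\cap T'|$ requires that general coupler curves of $\cG$ and $\cG'$ meet transversally in~$\C^2$, and your phrase ``a genericity argument shows general members of the two series share no further infinitely near point'' is not yet an argument. These families are not linear systems (they are parametrized algebraically by edge lengths), so a Bertini-type statement does not apply directly; one has to rule out that, say, every curve of one family is tangent to every curve of the other through a moving point. The paper isolates this as a separate lemma (\LEM{trans}) and its proof is not routine: assuming non-transversality for general parameters, one sets up the incidence locus $U_p\subset A_p\times B_p$ cut out by the Jacobian condition, compares its dimension to $m+n$, and the crux (Claim~1) is that for general $p$ the tangent slopes of curves from the two families through $p$ cannot coincide identically --- proved by observing that a common slope would define a slope field whose integral curves, by Picard--Lindel\"of, are locally unique, contradicting that at least one family is $\ge 2$-dimensional. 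You should either supply an argument of this kind or cite such a transversality statement; without it, \PPP{4} and hence \AXM{2} do not follow.
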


For its proof we assume that the reader is familiar with
intersection theory for surfaces (see for instance \citep[Sections V.1 and V.3]{har}).
Before we prove \PRP{pseudo} let us in preparation first
give the algebro geometric interpretation of infinitely near base points in
\RMK{blowup}
and prove \LEMS{lem:dim,lem:trans,lem:mult,lem:glob}.
\LEM{glob} is based on a result about ``global rigidity'' in \citep[Corollary~1.7]{rigid-con}.

\begin{remark}
\label{rmk:blowup}
Our definition for infinitely near base points is motivated as follows.
Let
\[
W_i:=\set{(z_0:z_1:z_2)\in\P^2}{z_i\neq 0}
\]
so that $\P^2=W_0\cup W_1\cup W_2$ and $\gamma_i^*(\C^2)=W_i$.
Suppose that $V(h)\subset\P^2$
is a curve that has multiplicity $m>0$ at some point~$r$ (see \citep[Exercise~I.5.3]{har} for multiplicities of curves at points).
Let us assume that $r\notin W_0$ and $r\in W_1$ so that $r=\gamma_1^*(p)$ for some $p\in\C^2$.
In this case the preimage of $V(h)$ \wrt $\gamma_1^*$
is defined as the zero set~$V(f)\subset \C^2$, where $f:=h\circ\gamma_1^*=h(x,1,y)$ is a polynomial in~$\C[x,y]$.
Notice that $V(f)$ has multiplicity~$m$ at~$p$,
since $\gamma_1^*$ defines an isomorphism~$\C^2\to W_1$.
We translate $p$ to the origin and consider the corresponding
curve
\[
C:=V(f(x+x_p,y+y_p)).
\]
The blowup of $\C^2$ at the origin is defined as follows (see \citep[Section~I.4]{har}):
\[
U:=\set{(x,y;\,t_0:t_1)\in\C^2\times\P^1}{x\,t_1=y\,t_0}.
\]
We find that $U=U_0\cup U_1$ where
\[
U_i:=\set{(x,y;t_0:t_1)\in U}{t_i\neq 0}.
\]
In the chart~$U_0$ we have the relation $y=x\tilde{y}$ with $\tilde{y}:=t_1/t_0$.
Thus the map $\phi\c \C^2\to U_0$ that sends $(x,\tilde{y})$ to $(x,\tilde{y}x;1:\tilde{y})$
is an isomorphism.
Let $\rho\c U_0\to \C^2$ be the projection that sends $(x,y;\,t_0:t_1)$ to $(x,y)$.
We find that the composition $\rho\circ\phi$ sends
$(x,y)$ to $(x,yx)$ and is an isomorphism when $x\neq 0$.
The Zariski closure of the fiber $\rho^{-1}(0,0)$ is called an \df{exceptional curve}
and the Zariski closure of~$\rho^{-1}(C)$ contains this exceptional curve
with multiplicity $m$ (see \citep[Example~I.4.9.1 and Proposition~V.3.6]{har}).
This exceptional curve corresponds via $\phi$ to the line~$V(x)\subset \C^2$.

The $(\rho\circ\phi)$-preimage of $C$ is defined as
the zero set of $f(x+x_p,yx+y_p)=g\circ\rho\circ\phi$ with $g(x,y):=f(x+x_p,y+y_p)$.
After removing the $(x^m)$-component,
this preimage is defined as the zero set of $\alpha^m_p(f)$.
As $\rho\circ\phi$ is an isomorphism outside $V(x)$,
we are only interested in $(x_q,y_q)\in V(\alpha^m_p(f))$ \st $x_q=0$.
In this case $y_q$ corresponds to a tangent direction of $C$ at
the origin and
thus gives a geometric interpretation of an infinitely near point (see \citep[Example~I.4.9.1 and Figure~I.3]{har}).

For the remaining chart $U_1$ we consider $\beta^m_p(f)$
instead of $\alpha^m_p(f)$.
Because $U_1\setminus U_0=\set{(x,y;\,t_0:t_1)\in U}{t_0=0,~t_1=1}$,
it follows that to complete the analysis we only
need to check the multiplicity of $(0,0)$ in $V(\beta^m_p(f))$.
Indeed, the definitions in this section are chosen \st each
point in the overlapping charts~$W_0\cap W_1$, $W_0\cap W_2$, $W_1\cap W_2$ and $U_0\cap U_1$
is only considered once.
\END
\end{remark}

\begin{lemma}
\label{lem:dim}
Suppose that $\cG$ is a calligraph. Let
$\V:=\vv(\cG)\setminus\{1,2\}$,
$\E:=\ee(\cG)\setminus\{\{1,2\}\}$,
$(x_1,y_1):=(0,0)$ and $(x_2,y_2):=(1,0)$.
\begin{Menum}

\item\label{lem:dim:a}
If $V_\cG\subset\C^{2|\V|}\times\C^{|\E|}$ denotes the zero set of
\[
\set{(x_i-x_j)^2+(y_i-y_j)^2-\l_{\{i,j\}}^2}{\{i,j\}\in\E}\subset \C[\l_e,x_i,y_i:e\in\E,i\in\V],
\]
then $\dim V_\cG=1+|\E|$. In particular, if
$\lambda=(\lambda_e)_{e\in\E}\in\C^{|\E|}$ is general and
$H_\lambda\subset\C^{2|\V|}\times\C^{|\E|}$ is the zero set of $\set{\l_e-\lambda_e}{e\in\E}$,
then $\dim(V_\cG\cap H_\lambda)=1$.

\item\label{lem:dim:b}
There exists a square free polynomial $F\in\C[\l_e,x_0,y_0:e\in\E]$ \st
for all edge length assignments~$\omega\in\Omega_\cG$,
the coupler curve~$\tt_\omega(\cG)$ is equal to the zero set of $F(\lambda,x_0,y_0)$,
where $\lambda=(\omega(e))_{e\in\E}$.

\item\label{lem:dim:c}
There exists a square free polynomial $H\in\C[\l_e,z_0,z_1,z_2:e\in\E]$
that is homogeneous in the variables $z_0$, $z_1$ and $z_2$
\st the series $\Gamma_\fH$ associated to the calligraph~$\cG$
is defined as the series associated to the following subset of homogeneous polynomials:
\[
\fH:=\set{H(\lambda,z_0,z_1,z_2)}{\lambda\in\C^{|\E|}}\subset\C[z_0,z_1,z_2].
\]

\item\label{lem:dim:d}
The coupler curve~$\tt_\omega(\cG)$ is 1-dimensional
for almost all~$\omega\in\Omega_\cG$.

\end{Menum}
\end{lemma}

\begin{proof}
\ref{lem:dim:a}
Recall that the calligraph~$\cG$ can be obtained by removing an edge~$\{u,v\}$
from some minimally rigid graph~$\cM$.
We define $V_\cM:=V_\cG\cap M$,
where $M\subset\C^{2|\V|}$ denotes the zero set of
the polynomial $(x_u-x_v)^2+(y_u-y_v)^2-\lambda_{\{u,v\}}^2$ for some general edge length~$\lambda_{\{u,v\}}\in\C$.
By construction, a point in $V_\cM\cap H_\lambda$ corresponds to a realization of~$\cM$ for some general edge lengths
determined by $\lambda=(\lambda_e)_{e\in\E}$ and $\lambda_{\{u,v\}}$.
Recall that for a general choice of edge lengths, a minimally rigid graph
admits at least one, but only finitely many realizations.
Therefore, $V_\cM\cap H_\lambda$ is non-empty and $\dim(V_\cM\cap H_\lambda)=0$.
This implies that $\dim V_\cM=|\E|$.
Moreover, $\dim V_\cG>|\E|$ as $\cG$ is not minimally rigid.
It follows from \citep[Theorem~0.2]{3264} that
\[
\dim V_\cM\geq \dim V_\cG - \codim M= \dim V_\cG - 1.
\]
Hence, $\dim V_\cG=1+|\E|$ and $\dim(V_\cG\cap H_\lambda)=1$ as was to be shown.

\ref{lem:dim:b}
Let $\rho\c\C^{2|\V|}\times\C^{|\E|}\to\C^2\times\C^{|\E|}$ denote the linear projection
that sends
\[
(x_i,y_i,\l_e)_{i\in\V,e\in\E}
\]
to $(x_0,y_0,\l_e)_{e\in\E}$.
We know from \citep[Theorem~3 in Section~3.2]{cox}
that the ideal of~$\rho(V_\cG)$
is obtained by eliminating the variables~$\set{x_i,y_i}{i\in \V\setminus\{0\}}$
from the ideal $\bas{V_\cG}$ of $V_\cG$.
By assertion~\ref{lem:dim:a}, $V_\cG\cap H_\lambda$ is a non-linear curve and thus
its projection to the $(x_0,y_0)$-plane is again a curve.
As a direct consequence of the definitions, this planar curve corresponds to the coupler curve~$\tt_\omega(\cG)$,
where $\lambda=(\omega(e))_{e\in\E}$.
This implies that the elimination ideal $\bas{V_\cG}\cap\C[\l_e,x_0,y_0]$
is generated by a single polynomial.
We define~$F(\l,x_0,y_0)$ to be the square free part of this polynomial.
Notice that the zero set of $F(\lambda,x_0,y_0)$ is equal to~$\tt_\omega(\cG)$, which was to be shown.

\ref{lem:dim:c}
This assertion is a direct consequence of assertion~\ref{lem:dim:b},
since the square free polynomial~$H$ is the homogenization
of~$F$ \wrt the variables $x_0$ and $y_0$.

\ref{lem:dim:d} This assertion is a direct consequence of assertion~\ref{lem:dim:b}.
\end{proof}

\begin{lemma}
\label{lem:trans}
If the calligraphs $\cG$ and $\cG'$ are centric,
then the coupler curves~$\tt_{\omega}(\cG)$ and $\tt_{\omega'}(\cG)$
are curves that
intersect transversally for almost all
edge length assignments~$\omega\in \Omega_{\cG}$ and~$\omega'\in \Omega_{\cG'}$.
\end{lemma}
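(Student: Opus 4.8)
The plan is to exhibit the intersection of two coupler curves as a fibre of a single morphism and to read off transversality from generic smoothness in characteristic zero. Set $N:=\dim\Omega_\cG$ and $N':=\dim\Omega_{\cG'}$; these are affine spaces, hence irreducible. For general $\omega$ the coupler curve is cut out in $\C^2$ by one polynomial whose coefficients depend polynomially on $\omega$, so $\tt_\omega(\cG)=V\bigl(H(\omega,-)\bigr)$ for some $H\in\C[\omega,x,y]$; because $\cG$ is centric its series has no base point in $\gamma_0^*(\C^2)$, so $H$ has no irreducible factor in $\C[x,y]$, and after deleting a possible factor in $\C[\omega]$ we may take $H$ squarefree with no factor in $\C[x,y]$ either, so that in particular $H$ genuinely involves some edge length. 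Choose $H'\in\C[\omega',x,y]$ for $\cG'$ in the same way and form the universal intersection
\[
\mathbf{W}:=V\bigl(H(\omega,x,y),\,H'(\omega',x,y)\bigr)\subseteq\Omega_\cG\times\Omega_{\cG'}\times\C^2,\qquad \pi\colon\mathbf{W}\to\Omega_\cG\times\Omega_{\cG'}.
\]
The scheme-theoretic fibre $\pi^{-1}(\omega,\omega')$ is the intersection scheme $\tt_\omega(\cG)\cap\tt_{\omega'}(\cG')$ in $\C^2$, and two plane curves meet transversally precisely when this $0$-dimensional scheme is reduced. Hence it is enough to show that $\pi$ is étale, or has empty fibres, over a dense open subset of $\Omega_\cG\times\Omega_{\cG'}$.

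First I would check that $\mathbf{W}$ is a complete intersection that is smooth outside a proper closed subset. Since $H$ and $H'$ involve the disjoint variable groups $\omega$ and $\omega'$ apart from $x,y$, a common irreducible factor would lie in $\C[x,y]$, which is excluded; so $\gcd(H,H')=1$, the pair is a regular sequence, and $\mathbf{W}$ is pure of dimension $N+N'$. At a point $(\omega,\omega',p)\in\mathbf{W}$ the rows of the Jacobian of $(H,H')$ are $\bigl(\partial_\omega H(\omega,p)\mid 0\mid\partial_p H(\omega,p)\bigr)$ and $\bigl(0\mid\partial_{\omega'}H'(\omega',p)\mid\partial_p H'(\omega',p)\bigr)$, which are independent whenever $\partial_\omega H(\omega,p)\neq 0$ and $\partial_{\omega'}H'(\omega',p)\neq 0$; hence $\mathbf{W}_{\mathrm{sing}}\subseteq Z_1\cup Z_2$ with $Z_1:=\{(\omega,\omega',p)\in\mathbf{W}:\partial_\omega H(\omega,p)=0\}$ and $Z_2$ defined symmetrically. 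Projecting $Z_1$ to the $(\omega,p)$-coordinates lands it in $\Sigma:=\{(\omega,p):H(\omega,p)=0=\partial_\omega H(\omega,p)\}$; as $H$ is squarefree and involves some edge length, some $\partial_{\omega_i}H$ is nonzero and not divisible by $H$, so $\Sigma$ is a proper closed subset of the hypersurface $\{H=0\}\subseteq\Omega_\cG\times\C^2$ and $\dim\Sigma\le N$. Over each $(\omega,p)\in\Sigma$ the fibre of $Z_1$ is $\{\omega':H'(\omega',p)=0\}$, which by centricity of $\cG'$ (the point $p$ is not a base point of its series) has dimension $\le N'-1$. Hence $\dim Z_1\le N+N'-1<\dim\mathbf{W}$, and likewise $\dim Z_2<\dim\mathbf{W}$, so $\mathbf{W}$ is generically smooth.

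Then I would conclude. If $\tt_\omega(\cG)$ and $\tt_{\omega'}(\cG')$ had a common component for general $(\omega,\omega')$, then fixing a general $\omega$ and running over the finitely many components of $\tt_\omega(\cG)$, a pigeonhole argument on closed conditions shows that some fixed curve lies on $\tt_{\omega'}(\cG')$ for all $\omega'$, i.e.\ is a base curve of the series of $\cG'$ in $\C^2$, contradicting centricity. So for general $(\omega,\omega')$ the intersection is finite, i.e.\ $\pi$ is generically finite; and if $\pi$ is not dominant the general intersection is empty and there is nothing to prove. Otherwise, applying generic smoothness in characteristic zero (cf.\ \citep[Corollary~III.10.7]{har}) to $\pi$ restricted to the smooth locus of $\mathbf{W}$ (componentwise) gives a dense open $U\subseteq\Omega_\cG\times\Omega_{\cG'}$ over which this restriction is smooth; shrinking $U$ so as to avoid the lower-dimensional image $\pi(\mathbf{W}_{\mathrm{sing}})$, to avoid the images of any components of $\mathbf{W}$ not dominating the base, and so that $\pi$ has finite fibres over $U$, we get that $\pi^{-1}(v)$ is a finite reduced scheme for every $v\in U$. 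By the first paragraph the two coupler curves meet transversally for $(\omega,\omega')=v$, and $U$ is dense open in the irreducible variety $\Omega_\cG\times\Omega_{\cG'}$.

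The step I expect to be the main obstacle is the generic smoothness of $\mathbf{W}$, i.e.\ that the universal intersection is not everywhere singular. This is where centricity is essential: it guarantees that neither series has a base point in $\C^2$, hence that $H$ and $H'$ genuinely move as the edge lengths vary and that for every finite point $p$ the coupler curves through $p$ form a proper subfamily; were $p$ a finite base point, $\mathbf{W}$ would be singular along a positive-dimensional locus above $p$ and the argument would break down. The remainder is routine: fibre-dimension estimates and the standard equivalence between a reduced $0$-dimensional intersection scheme and transversality.
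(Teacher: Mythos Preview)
Your proof is correct and takes a genuinely different route from the paper's.

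The paper argues by contradiction: assuming the coupler curves meet non-transversally for general $(\omega,\omega')$, it sets up for each $p\in\C^2$ the locus $U_p\subset\C^m\times\C^n$ of parameter pairs for which $p$ is a non-transversal intersection point, and shows $\dim\bigcup_p U_p<m+n$ by a two-step dimension count. The crucial step (their Claim~1) rules out the possibility that, for general $p$, all curves of both families through $p$ share the same tangent direction; this is done by interpreting the common tangent slope as a slope field $u(z)$ on~$\C^2$ and invoking the Picard--Lindel\"of theorem to conclude that all coupler curves through $p$ would then coincide near~$p$, contradicting that the family is at least two-dimensional.

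Your argument instead packages everything into the universal intersection $\mathbf{W}\subset\Omega_\cG\times\Omega_{\cG'}\times\C^2$, shows it is a reduced complete intersection whose singular locus has strictly smaller dimension (this is where centricity enters, via the fibre-dimension bound over~$\Sigma$), and then appeals to generic smoothness in characteristic zero. This is cleaner and entirely algebro-geometric: it avoids the ODE-uniqueness detour and handles the low-dimensional cases (e.g.\ $\cG\in\{\cL,\cR\}$) uniformly rather than separately. The paper's approach, on the other hand, makes the role of the $\ge 2$-dimensional family more visible and is self-contained without invoking the machinery of \'etale morphisms. Both arguments use centricity in the same essential way---to guarantee that no point of~$\C^2$ is a base point of either series---but yours leverages this more directly.
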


\begin{proof}
Let $F(\l,z)$ denote the square free polynomial in \LEM{dim}\ref{lem:dim:b},
where we renamed the variables~$(x_0,y_0)$ to $z:=(z_1,z_2)$.
Thus, the coupler curve~$\tt_\omega(\cG)$ is equal to the zero set of~$F(a,z)$,
where $a_e$ equals the edge length~$\omega(e)$ for all~$e\in \E$.
Let $F_i$ for $i\in\{1,2\}$ denote the partial derivative~$\partial_{z_i} F$
\wrt $z_i$.
Analogously, we obtain a polynomial~$G(b,z)$ for $\cG'$ and its partial
derivatives $G_1$ and $G_2$.
Since for general edge length assignments the coupler curves of $\cG$ and $\cG'$
do not have a common component, it follows that $F$ and~$G$ have no common factor.

For all $p\in\C^2$ we consider the set
\begin{multline*}
U_p:=\bigl\{
(\alpha,\beta)\in\C^m\times\C^n
:\\
F(\alpha,p)=G(\beta,p)=
F_1(\alpha,p)\cdot G_2(\beta,p)-F_2(\alpha,p)\cdot G_1(\beta,p)=0
\bigr\},
\end{multline*}
where $m:=|\ee(\cG)|-1$ and $n:=|\ee(\cG')|-1$.

First suppose that both $\cG$ and $\cG'$ admit only a 1-dimensional family of
coupler curves. Thus, the
coupler curves of $\cG$ and $\cG'$ are \Wlog the circles centered at $(0,0)$ and $(1,0)$, \resp.
In this case, the main assertion holds
and thus we may assume in the remainder of the proof that $\cG$ admits
a $d$-dimensional family of coupler curves \st $d\geq 2$.
This implies that $m\geq 2$ and thus $m+n\geq 3$.

We suppose by contradiction that
for general~$\omega\in\Omega_\cG$ and $\omega'\in\Omega_{\cG'}$
the coupler curves~$\tt_{\omega}(\cG)$ and $\tt_{\omega'}(\cG)$
intersect non-transversally at some point $p\in\C^2$.
Algebraically, this means that for all $(a,b)\in\C^m\times\C^n$
there exists $p\in\C^2$ \st $(a,b)\in U_p$.
Indeed, the equations $F(a,p)=0$ and $G(b,p)=0$ indicate that both coupler curves pass
through the point~$p$ and the Jacobian determinant equation $F_1(a,p)\cdot G_2(b,p)-F_2(a,p)\cdot G_1(b,p)=0$
means that the tangent vectors of the coupler curves are either linear dependent at~$p$,
or $p$ is a singular point of at least one of the coupler curves.
Hence, we established that
\[
\dim \bigcup_{p\in \C^2}U_p=m+n.
\]
Let $A_p:=\set{\alpha\in\C^m}{F(\alpha,p)=0}$ and $B_p:=\set{\beta\in\C^n}{G(\beta,p)=0}$.
Since the calligraphs are centric, the
series associated to these calligraphs only admit base points at infinity.
It follows that there does not exists
a point $p\in\C^2$ \st $A_p=\C^m$ and $B_p=\C^n$.
This implies that for all~$p\in \C^2$ we have
\[
\dim U_p\leq \dim A_p\times B_p=m+n-2.
\]

\textbf{Claim 1.} \textit{%
For general $p\in \C^2$, $a\in A_p$ and $b\in B_p$ we have}
\[
F_1(a,p)\cdot G_2(b,p)-F_2(a,p)\cdot G_1(b,p)\neq 0.
\]
Let the coupler curves~$C_\alpha$ and $D_\beta$ denote the zero sets in $\C^2$ of $F(\alpha,z)$ and $G(\beta,z)$, \resp.
We observe that $\dim A_q=\dim A_p$ for almost all~$q\in\C^2$
and thus instead of first fixing a general~$p\in\C^2$ and afterwards a general~$a\in A_p$, we may equivalently
first fix a general $a\in\C^m$ and afterwards a general $p\in C_a$ so that $a$ is general in~$A_p$ as well.
It follows that $p\notin\sng C_a$ as a general point in~$C_a$ is smooth.
The analogous argument shows that $p\notin\sng D_b$.
This implies that $(F_1(a,p),F_2(a,p))\neq (0,0)$ and $(G_1(b,p),G_2(b,p))\neq (0,0)$.
We set $s_\alpha(z):=F_1(\alpha,z)/F_2(\alpha,z)$ and $t_\beta(z):=G_1(\beta,z)/G_2(\beta,z)$.
Now suppose by contradiction that $F_1(a,p)\cdot G_2(b,p)-F_2(a,p)\cdot G_1(b,p)=0$.
In this case, the curves~$C_a$ and $D_b$ must intersect tangentially at $p$.
The slope $s_a(p)=t_b(p)$ of their mutual tangent line at~$p$
does not depend on $a$ or $b$, as $(a,b)\in A_p\times B_p$ is general.
As $p$ is general as well,
we established that $s_\alpha(z)=t_\beta(z)$ for almost all $(z,\alpha,\beta)\in \C^2\times A_p\times B_p$.
Therefore, the rational function~$u(z):=s_\alpha(z)=t_\beta(z)$ only depends on $z\in\C^2$
and the $\alpha$'s and $\beta$'s cancel out.
We established that~$u(z)$
defines a slope field that is continuous for almost all~$z\in \C^2$,
and in particular, $u(z)$ is continuous at~$p$.
By construction, the curve~$C_a$ is an integral curve for the slope field
that passes through~$p$.
By the implicit function theorem we may assume \Wlog that $C_a$
is in a complex analytic neighborhood~$W_p$ around $p$ defined by the graph of the function~$g(z_1)$.
It follows from the Picard-Lindel\"of theorem that $g(z_1)$ is locally the
unique solution to the differential equation~$f'(z_1)=u(z_1,f(z_1))$
with initial condition $f(p_1)=p_2$, where $p=(p_1,p_2)$.
Hence, $C_{\tilde{a}}\cap W_p=C_a\cap W_p$ for all $\tilde{a}\in A_p$.
Now recall that, by assumption, the family of coupler curves
is $d$-dimensional with $d\geq 2$.
Thus, the family of coupler curves
that pass through the point~$p$ is $(d-1)$-dimensional.
We arrived at a contradiction, as the coupler curves containing~$p$
cannot all be equal in the neighborhood~$W_p$.
This concludes the proof of Claim~1.

It follows from Claim~1 that $U_p\neq A_p\times B_p$ for almost all $p\in \C^2$. Notice that
$U_p\subset A_p\times B_p$ is also not a component of maximal dimension, since $(a,b)\in A_p\times B_p$ was assumed general.
Thus, for almost all $p\in \C^2$ the algebraic set~$U_p\subset A_p\times B_p$ is of codimension 1 so that
\[
\dim U_p\leq m+n-3.
\]
We arrived at a contradiction since $\dim \bigcup_{p\in \C^2}U_p\leq m+n-1$.
This concludes the proof as
the coupler curves~$\tt_{\omega}(\cG)$ and $\tt_{\omega'}(\cG)$
must intersect transversally.
\end{proof}

\begin{lemma}
\label{lem:mult}
Suppose that $\cG$ and $\cG'$ are centric calligraphs.
\begin{Mlist}
\item
The coupler multiplicity and pseudo class of $\cG$ are well-defined.
\item
If $(\cG,\cG')$ is a calligraphic split, then
for almost all the edge length assignments~$\omega\in\Omega_\cG$
and~$\omega'\in\Omega_{\cG'}$, we have
\[
\cc(\cG\cup\cG')=m\cdot m'\cdot |T\cap T'|,
\]
where $T:=\tt_\omega(\cG)$, $m:=\mm(\cG)$, $T':=\tt_{\omega'}(\cG')$ and $m':=\mm(\cG')$.
\end{Mlist}
\end{lemma}

\begin{proof}
Let $\E$, $\V$ and
$V_\cG,H_\lambda\subset\C^{2|\V|}\times\C^{|\E|}$ be defined as in \LEM{dim}\ref{lem:dim:a}.
Let the linear projection~$\phi\c V_\cG\dto \phi(V_\cG)\subset \C^2\times\C^{|\E|}$
send $(x_i,y_i,\l_e)_{i\in\V,e\in\E}$ to $(x_0,y_0,\l_e)_{e\in\E}$.
We suppose that $(\omega(e))_{e\in\E}=(\lambda_e)_{e\in\E}$,
where $\omega\in\Omega_\cG$ is general by assumption.
We know from \LEM{dim}\ref{lem:dim:a} that $C_\lambda:=V_\cG\cap H_\lambda$ is a curve
and it follows from \LEM{dim}\ref{lem:dim:b} that
\[
\phi(C_\lambda)=\set{(p,\lambda)\in \C^2\times\C^{|\E|}}{p\in T}.
\]
Since the projection~$\phi(C_\lambda)$ is a curve,
the restricted map $\phi|_{C_\lambda}$ is generically finite.
From this it follows that $\phi$ itself is generically finite.
As a straightforward consequence of the definitions,
we find that for almost all $p\in T$ the following holds:
\[
m=|\phi^{-1}(p,\lambda)|=|\set{\xi\in \Xi_\cG^\omega}{\xi(0)=p}|<\infty.
\]
We established that the coupler multiplicity~$m$ of~$\cG$ is well-defined
as it does not depend on the
general choices of $\omega\in\Omega_\cG$ and $p\in T$.

Suppose that $[\cG]=m\cdot (a,b,c)$ is the pseudo class of~$\cG$.
As a consequence of \LEM{dim}\ref{lem:dim:c},
the multiplicities $(a,b,c)$
do not depend on the general choice of edge length assignments.
We established that the pseudo class is well-defined.

The number of realizations $\cc(\cG\cup\cG')\in\Z_{>0}$ of the minimally rigid graph $\cG\cup\cG'$
does not depend on the general choices of edge length assignments
by \citep[Theorem~3.6]{rigid-jack-2018} (see alternatively \citep[Corollary~1.11]{rigid-alg}).
We know from \LEM{dim}\ref{lem:dim:d} that the coupler curves~$T$ and~$T'$ are indeed curves.
Each point in $T\cap T'$ corresponds to the realization of the vertex~0 in a realization of~$\cG\cup\cG'$
(see Figures~\ref{fig:min} and~\ref{fig:tm}).
Thus, if $T\cap T'=\{p_1,\ldots,p_k\}$, then
\[
\cc(\cG\cup\cG')=\sum_{1\leq i\leq k}
|\set{\xi\in \Xi_\cG^\omega}{\xi(0)=p_i}|
\cdot
|\set{\xi'\in \Xi_{\cG'}^{\omega'}}{\xi'(0)=p_i}|.
\]
This implies that
$\cc(\cG\cup\cG')=m\cdot m'\cdot |T\cap T'|$.
\end{proof}

\begin{lemma}
\label{lem:glob}
If $\cG$ is a thin calligraph,
then for general~$p\in \tt_\omega(\cG)$
and general edge length assignment~$\omega\in\Omega_\cG$
there exists a unique realization $\xi\in \Xi_\cG^\omega$
\st $\xi(0)=p$. In other words, $|\set{\xi\in \Xi_\cG^\omega}{\xi(0)=p}|=1$.
\end{lemma}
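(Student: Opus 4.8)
The plan is to reduce the lemma to the \textbf{generic global rigidity} of the graph $\cG':=\cG\cup\cL\cup\cR$. Since $\cL$ and $\cR$ contribute the edges $\{0,1\}$ and $\{0,2\}$, we have $\vv(\cG')=\vv(\cG)$ and $\ee(\cG')=\ee(\cG)\cup\{\{0,1\},\{0,2\}\}$, and thinness forces these two edges to be new (otherwise $\cG'-e$ would have too few edges to be minimally rigid for some $e$). By hypothesis $\cG'$ is $3$-vertex connected and $\cG'-e$ is minimally rigid for every $e\in\ee(\cG')$, i.e.\ $\cG'$ is redundantly rigid; hence \citep[Corollary~1.7]{rigid-con} applies, and for almost all $\omega'\in\Omega_{\cG'}$ with $\Xi_{\cG'}^{\omega'}\neq\emptyset$ any two realizations in $\Xi_{\cG'}^{\omega'}$ differ by an isometry of the plane. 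Because every realization of $\cG'$ sends $1$ and $2$ to $(0,0)$ and $(1,0)$, such an isometry fixes $(0,0)$ and $(1,0)$, so it is either the identity or the reflection $\sigma(x,y):=(x,-y)$. Consequently $\Xi_{\cG'}^{\omega'}$ contains \emph{at most one} realization with a prescribed position $p$ for vertex $0$, as long as $\sigma(p)\neq p$.

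Next I would set up the reduction. Fix a general $\omega\in\Omega_\cG$ and a general $p=(p_x,p_y)\in\tt_\omega(\cG)$, and let $\omega'\in\Omega_{\cG'}$ be the assignment with $\omega'|_{\ee(\cG)}=\omega$, $\omega'(\{0,1\})^2=p_x^2+p_y^2$ and $\omega'(\{0,2\})^2=(p_x-1)^2+p_y^2$ (the ambiguity in the square roots does not affect $\Xi_{\cG'}^{\omega'}$). Every $\xi\in\Xi_\cG^\omega$ with $\xi(0)=p$ then also satisfies the two new edge equations (as $\xi(1)=(0,0)$ and $\xi(2)=(1,0)$), hence lies in $\Xi_{\cG'}^{\omega'}$; therefore $\{\xi\in\Xi_\cG^\omega:\xi(0)=p\}\subseteq\{\xi\in\Xi_{\cG'}^{\omega'}:\xi(0)=p\}$. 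The left-hand set is non-empty since $p\in\tt_\omega(\cG)$, and by the previous paragraph the right-hand set is a singleton provided $\omega'$ is general among the realizable assignments of $\cG'$ and $p_y\neq0$. This yields $|\{\xi\in\Xi_\cG^\omega:\xi(0)=p\}|=1$, which is the assertion (equivalently, $\mm(\cG)=1$).

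It remains to justify the two genericity conditions, and \textbf{this is the step I expect to be the main obstacle}. That a general $p\in\tt_\omega(\cG)$ has $p_y\neq 0$ is immediate, since for general $\omega$ the coupler curve is a genuine curve and hence is not the line $\{p_y=0\}$. For the other condition one argues by dimension count: the pairs $(\omega,p)$ with $p\in\tt_\omega(\cG)$ form a variety $\mathcal{T}$ of dimension $\dim\Omega_\cG+1$ (the coupler curve being $1$-dimensional for general $\omega$), while the realizable assignments of $\cG'$ form a hypersurface $\mathcal{R}\subseteq\Omega_{\cG'}$ — because $\cG'$ carries exactly one edge more than a minimally rigid graph — so $\dim\mathcal{R}=\dim\Omega_{\cG'}-1=\dim\Omega_\cG+1$ as well. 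The map $(\omega,p)\mapsto\omega'$ is a morphism $\mathcal{T}\to\mathcal{R}$, and it is surjective: a realizable $\omega'$ is the image of $(\omega'|_{\ee(\cG)},\xi(0))$ for any $\xi\in\Xi_{\cG'}^{\omega'}$. A surjective morphism between varieties of the same dimension is generically finite, so the image of a general point of $\mathcal{T}$ is a general point of $\mathcal{R}$, as needed. The delicate part of this argument is the bookkeeping with the (possibly several) irreducible components of $\mathcal{T}$ and $\mathcal{R}$ — recall that coupler curves may be reducible, already for $\cC_v$ — where one must verify that every component of $\mathcal{T}$ dominates a component of $\mathcal{R}$; and, separately, one must make sure that \citep[Corollary~1.7]{rigid-con} is applied in the form valid over $\C$, namely that a generic complex framework of a $3$-connected redundantly rigid graph is the only one, up to complex isometry, with its edge lengths.
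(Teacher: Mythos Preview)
Your proposal is correct and uses the same core idea as the paper: both reduce to the generic global rigidity of $\cG':=\cG\cup\cL\cup\cR$ via \citep[Corollary~1.7]{rigid-con}, and both exploit that the only nontrivial isometry fixing $(0,0)$ and $(1,0)$ is the reflection $\sigma$.

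The one noteworthy difference is in how genericity is established, precisely the step you flag as the main obstacle. You start from a general pair $(\omega,p)$ on the $\cG$-side, build $\omega'\in\Omega_{\cG'}$, and then have to argue via a dimension count that $\omega'$ lands in the generic locus of realizable assignments of~$\cG'$ --- which, as you note, requires some care with irreducible components. The paper reverses this direction: it begins by placing the vertices of~$\cG'$ at \emph{general} coordinates (normalized so that $1\mapsto(0,0)$, $2\mapsto(1,0)$), so that the resulting $\omega'$ is automatically general and $|\Xi_{\cG'}^{\omega'}|=2$ follows at once; then $\omega:=\omega'|_{\ee(\cG)}$ and $p:=\xi'(0)$ are general simply because they are coordinates of a generically chosen point. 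This sidesteps your dimension count entirely and makes the component bookkeeping disappear. Your route works, but the paper's direction is the cleaner one.
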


\begin{proof}
Let us assign general coordinates to the vertices of $\cG':=\cG\cup\cL\cup\cR$.
We may assume up to rotations, translations and scalings that vertices 1 and 2 have coordinates
$(0,0)$ and $(1,0)$, \resp. Suppose that $\omega'\in \Omega_{\cG'}$ is the corresponding
edge length assignment.
Since $\cG'$ is thin, it follows that $\cG'$ is 3-vertex connected and
``generically redundantly rigid'', so $\cG'$ is ``generically globally rigid''
by \citep[Corollary~1.7]{rigid-con}.
In other words, $\cG'$ admits
up to rotations, translations and reflections only one realization in the plane
that is compatible with the edge length assignment~$\omega'$.
This implies that $|\Xi_{\cG'}^{\omega'}|=2$, since we do not identify reflections.
We now fix $\xi'\in \Xi_{\cG'}$ to be one of the two realizations.
Let the edge length assignment~$\omega\in \Omega_\cG$
and realization~$\xi\in \Xi_\cG^\omega$ be induced by $\omega'$ and $\xi'$, \resp.
By construction, we have that $p:=\xi(0)$ and $\omega$ are general and thus
satisfy our main hypothesis.
The distances $\delta(\xi(0)-\xi(1))$ and $\delta(\xi(0)-\xi(2))$
with $\delta(x,y):=(x^2+y^2)^{1/2}$
are equal to the edge lengths~$\omega'(\{0,1\})$ and $\omega'(\{0,2\})$, \resp.
Hence, $|\Xi_{\cG'}^{\omega'}|=2$ implies that $|\set{\xi\in \Xi_\cG^\omega}{\xi(0)=p}|=1$.
\end{proof}

\newpage
\begin{proof}[Proof of \PRP{pseudo}.]
Suppose that $\cG$ and $\cG'$ are centric calligraphs
and let the edge length assignments $\omega\in\Omega_\cG$ and $\omega'\in\Omega_{\cG'}$ be general.
We follow the shorthand notation of \COR{class},
where $T:=\tt_\omega(\cG)$, $m:=\mm(\cG)$, $T':=\tt_{\omega'}(\cG')$
and $m':=\mm(\cG')$.
We know from \LEM{dim}\ref{lem:dim:d} that the coupler curves $T$ and $T'$ are indeed curves.
Let the pseudo classes of $\cG$ and $\cG'$ be
\[
[\cG]=m\cdot (a,b,c)
\quad\text{and}\quad
[\cG']=m'\cdot (a',b',c').
\]
We know from \LEM{mult} that these coupler multiplicities and pseudo classes are well-defined.
We proceed to show that axioms and properties for pseudo classes of centric calligraphs.

Recall from \EXM{R} that $\cR$ has pseudo class $(1,0,1)$.
Similarly, we find that $[\cL]=(1,1,0)$ and $[\cC_v]=(2,0,0)$ for all $v\in\Z_{\geq 3}$.
Hence, the pseudo class satisfies Axiom \AXM{1}.

If $\cG\cup\cL$ is not minimally rigid, then we observe that the coupler curve~$T$
of $\cG$ is a circle that is centered around vertex~$1$, which implies
that $[\cG]=m\cdot [\cL]=(m,m,0)$. The analogous statement holds
in case $\cG\cup\cR$ is not minimally rigid, and thus \PPP{3} holds true.

Property \PPP{2} follows from \LEM{glob} and the definition of coupler multiplicity.

Let $\Gamma_\cH$ be the associated series of $\cG$.
Since $\Gamma_\cH$ is centric by assumption,
almost all curves in $\Gamma_\cH$ meet the
line at infinity at only the complex conjugate cyclic points with
multiplicity~$a$ and thus it follows from
B\'ezout's theorem that a curve in $\Gamma_\cH$ has
degree $2a$ so that \PPP{5} is fulfilled.

Suppose that $Y$ is the blowup of $\P^2$ at the cyclic points
and the infinitely near 1-centric and 2-centric points.
We remark that after blowing up $\P^2$ at the cyclic
points~$u$ and~$\overline{u}$,
the infinitely near points lie on the
two complex conjugate exceptional curves that contract to $u$ and $\overline{u}$, respectively (see \RMK{blowup}).
Let $v$, $\overline{v}$ and $w$, $\overline{w}$ denote the pairs of 1-centric and 2-centric points, \resp.
Thus the centers of blowup consist of three pairs of complex conjugate points.
Therefore, $Y$ admits a real structure,
namely an antiholomorphic involution~$\sigma_Y\c Y\to Y$ (see \RMK{real}).

In the following we use the algebro geometric concepts of divisor classes
and canonical classes, which should not be confused with classes or pseudo classes
of calligraphs.

Let $\Theta:=\{u,{\overline{u}},v,{\overline{v}},w,{\overline{w}}\}$
and for $p\in \Theta$ let $\e_p$
denote the divisor class of the exceptional curve that is centered at~$p$.
We denote by $\e_0$ the divisor class of the pullback of a general line in $\P^2$.
The N\'eron-Severi lattice of $Y$ is generated by the group lattice
$
N(Y)=\bas{\e_0,\e_p:p\in\Theta}_\Z,
$
where the non-zero intersection products between the generators are
$\e_0^2=-\e_p^2=1$ for $p\in \Theta$ (see \citep[Proposition~V.3.2]{har}).
The real structure $\sigma_Y$ induces a unimodular involution $\sigma_*\c N(Y)\to N(Y)$
\st $\sigma_*(\e_0)=\e_0$ and $\sigma_*(\e_p)=\e_{\overline{p}}$ for $p\in\Theta$
(see \citep[chapter~I]{sil}).
Therefore, the real group lattice is defined as
\[
N_\R(Y)=\bas{\e_0,\e_1,\e_2,\e_3}_\Z
~~\text{where}~~
\e_1:=\e_c+\e_{\overline{c}},\quad
\e_2:=\e_u+\e_{\overline{u}}
~~\text{and}~~
\e_3:=\e_v+\e_{\overline{v}}.
\]
The only nonzero intersection products between the generators are
\[
\e_0^2=1
\quad\text{and}\quad
\e_1^2=\e_2^2=\e_3^2=-2.
\]
The canonical class of $Y$ is as follows
(see \citep[Example~II.8.20.3 and Proposition~V.3.3]{har}):
\[
\kappa:=-3\,\e_0+\e_1+\e_2+\e_3.
\]
If $C\subset\C^2$ is a real curve, then
its class~$[C]\in N_\R(Y)$ is defined as the divisor class of the strict transform of
$\gamma_0^*(C)\subset\P^2$ along the blowup map~$Y\to\P^2$.
Notice that the coupler curve~$T$ is real, if the edge length assignment~$\omega$ is real.
We have $\deg T=2 a=[T]\cdot\e_0$ by~\PPP{5} so that
\[
[T]:=2a\,\e_0-a\,\e_1-b\,\e_2-c\,\e_3
\quad\text{and}\quad
[T']:=2a'\,\e_0-a'\,\e_1-b'\,\e_2-c'\,\e_3.
\]
Let $\tT$ be the strict transform of $\gamma_0^*(T)\subset\P^2$ to $Y$
along the blowup map $Y\to\P^2$.
By construction, $\e_1-\e_2$ and $\e_1-\e_3$ are divisor classes
of the exceptional curves that are contracted to points by the blowup map $Y\to\P^2$.
Since these curves are not components of~$\tT$,
we require that $[T]\cdot (\e_1-\e_2)\geq0$ and $[T]\cdot (\e_1-\e_3)\geq 0$,
which implies that $a\geq b\geq0$ and $a\geq c\geq 0$ so that \PPP{1} holds.

In order to verify \PPP{6}, let us consider the coupler decomposition~$(T_1,\ldots, T_n)$
of~$T$ and let us denote the corresponding irreducible components of the strict transform~$\tT$
to~$Y$ by $(\tT_1,\ldots,\tT_n)$.
We set $\alpha_i:=[T_i]$ for all $1\leq i\leq n$
so that $\alpha_1+\ldots+\alpha_n=[T]=\frac{1}{m}\cdot [\cG]$.
If $\alpha_i=(\alpha_{i0},\alpha_{i1},\alpha_{i2})$, then
$\alpha_{i0}\geq \alpha_{i1}\geq0$ and $\alpha_{i0}\geq \alpha_{i2}\geq 0$
by the same argument used for proving \PPP{1}.
Using the same argument for proving \PPP{5} we find that
\[
\deg T_i=2\,\alpha_{i0}=\alpha_i\cdot(1,0,0).
\]
It follows from the genus formula
(see \citep[Proposition~V.1.5 and Examples~V.3.9.2 and V.3.9.3]{har}) that:
\[
g(T_i)=g(\tT_i)=\tfrac{1}{2}([T_i]^2+[T_i]\cdot \kappa)+1-\sum_{p\in \sng\tT_i}\delta_p(\tT_i),
\]
where the \df{delta invariant} $\delta_p(\tT_i)$ of a singular point $p\in\tT_i$
is at least one so that
\[
|\sng T_i|\leq \sum_{p\in \sng \tT_i}\delta_p(\tT_i).
\]
It is now a straightforward to verify that \PPP{6} is satisfied.

If $(\cG,\cG')$ is a calligraphic split, then
$\cc(\cG\cup\cG')=m\cdot m'\cdot |T\cap T'|$ by \LEM{mult}.
It follows from \LEM{trans} that the curves~$T$ and~$T'$ intersect
transversally and thus we know from \citep[Theorem~V.1.1]{har} that
\[
|T\cap T'|=[T]\cdot [T']=4aa'-2aa'-2bb'-2cc'=\frac{1}{m\cdot m'}\cdot[\cG]\cdot [\cG'].
\]
This concludes the proof for \PPP{4}
and as a consequence Axiom~\AXM{2} holds as well.

We established that all axioms and properties for pseudo classes are true
under the assumption that calligraphs are centric.
It follows from \ALG{class} and the discussion in \RMK{alg},
that if the class~$[\cG]$ of the calligraph~$\cG$ exist,
then by \AXM{1}, \AXM{2} and \PPP{3} it can be uniquely recovered from
the number of realizations $\cc(\cG\cup\cL)$, $\cc(\cG\cup\cR)$ and $\cc(\cG\cup\cC_v)$.
Therefore, the class for calligraphs is unique and
we conclude the proof of \PRP{pseudo}.
\end{proof}

The following lemma is only needed in \SEC{op} and its 8 conditions correspond to the 8 conditions of \PRP{cases}.
However, now is the right time to understand its statement.

\begin{lemma}
\label{lem:centric}
If $\fH\subset\C[z_0,z_1,z_2]$ is a subset of homogeneous polynomials of the same degree with
real coefficients, then its associated series~$\Gamma_\fH$ is centric
if the following 8 conditions hold:
\begin{enumerate}[topsep=0mm,itemsep=0mm,leftmargin=20pt]
\item\label{lem:centric:V} We have $V(f)\cap V(z_0)=\{(0:1:\ii),(0:1:-\ii)\}$ for a general $f\in\fH$.
\item The set $\gamma_0(\fH)$ has no base points.
\item The set $\gamma_1(\fH)$ has an $m$-fold base point at $p:=(0,\ii)$ for some $m>0$.
\item The set $\gamma_2(\fH)$ does not have $(0,0)$ as a base point.
\item If $\alpha_p^m\circ\gamma_1(\fH)$ has an $n$-fold base point~$q$ for some $n>0$, then $q\in\{(0,0),(0,-\ii)\}$.
\item The set $\beta_p^m\circ\gamma_1(\fH)$ does not have $(0,0)$ as a base point.
\item If $q$ is an $n$-fold base point, then $\alpha_q^n\circ\alpha_p^m\circ\gamma_1(\fH)$ has no base points.
\item If $q$ is an $n$-fold base point, then $(0,0)$ is not a base point of $\beta_q^n\circ\alpha_p^m\circ\gamma_1(\fH)$.
\end{enumerate}
\end{lemma}

\begin{proof}
Since $\fH$ consist of polynomials with real coefficients, the multiplicities at conjugate base points are equal, so that
we only need to check~$p:=(0,\ii)$. The remaining statements are a straightforward
consequence of the definitions and left to the reader.
It may be instructive to verify that if $\fH$ is as defined as in \EXM{hbp},
then each of the eight conditions hold.
\end{proof}

We now proceed with showing that the hypothesis of \PRP{pseudo} holds,
namely that indeed all calligraphs are centric.

\section{Sufficient conditions for centricity of calligraphs}
\label{sec:op}

In \LEM{centric} of the previous section, we gave eight sufficient conditions for
a calligraph to be centric. We conclude this section with equivalent conditions
by instead performing operators on the quadratic polynomials that are associated to edges.
Since we know the quadratic polynomials,
the resulting conditions can be shown to be valid for every calligraph.

The methods of this and the remaining sections should be accessible to the non-expert,
but are rather technical due to coordinate dependence.
For this reason let us start with giving an informal explanation.
If $\cG$ is a calligraph with associated series $\Gamma_\fH$,
then in \LEM{centric} we gave sufficient conditions for~$\cG$ to be centric
in terms of~$\fH$.
In particular, it is required that the base points of~$\alpha_p^m\circ\gamma_1(\fH)$
are either $(0,0)$ or $(0,-\ii)$ for some $m>0$ with $p=(0,\ii)$.
However, $\fH$ is not directly accessible from the combinatorial data of~$\cG$
and thus will not lead to a proof which states that all calligraphs are centric.
Instead, we assign to each edge $e=\{i,j\}$ of the calligraph~$\cG$ a quadratic polynomial
$(x_i-x_j)^2+(y_i-y_j)^2-\ell_e^2$, where $(x_i,y_i)$ is the coordinate of
a realization of vertex~$i$ and $\ell_e$ is the edge length of $e$.
The polynomial representing~$\fH$ is obtained by first homogenizing and then eliminating
from the set of quadratic polynomials the variables $x_i$ and $y_i$ for all vertices~$i>0$, except vertex~$0$.
Geometrically, this means that the coupler
curves of~$\cG$ are linear projections of curves defined by the zero set
of quadratic polynomials.
We introduce operators such as $H$, $M$ and $T$, that take as input a finite set of polynomials
and output the set of polynomials that are the result of substitutions and removing certain factors.
We apply compositions of these maps to the set of quadratic polynomials and show
in particular that the above condition for~$\alpha_p^m\circ\gamma_1(\fH)$
is equivalent to the condition~$B \circ G\circ T\circ M\circ H\circ\mu(\E)\subseteq\{(0,0),~(0,-\ii)\}$,
where $\mu(\E)$ is the set of quadratic polynomials assigned to edges~$\E$
and the expression~$B\circ G$ informally means ``the set of base points after elimination''.
This will be the fifth of the eight conditions at \PRP{cases}.
Let us now proceed with formally defining the notation.

Suppose that $\cG$ is a calligraph with $\V:=\vv(\cG)\setminus\{1,2\}$
and $\E:=\ee(\cG)\setminus\{\{1,2\}\}$.
We define $R:=\C[x_i,y_i,\l_e: i\in \V,~e\in \E]$ and
$S:=\C[x_0,y_0,\l_e:e\in \E]$ to be polynomial rings.
We denote the set of variables of these rings by
\[
\gR := \set{x_i,y_i,\l_e}{i\in \V,~e\in \E}
\qquad\text{and}\qquad
\gS := \set{x_0,y_0,\l_e}{e\in \E}.
\]
The map $\mu\c \E\to R$ is defined as
\[
\mu(e):=(x_i-x_j)^2+(y_i-y_j)^2-\l_e^2,
\]
where $e=\{i,j\}$, $(x_1,y_1):=(0,0)$ and $(x_2,y_2):=(1,0)$.

Let $f\in R$ be a polynomial.
We denote by $f\sub{a_0\to b_0,\ldots}$
the result of substituting the variable $a_i\in \gR$ in $f$
with the rational function $b_i\in \C(\gR\cup\{z_0\})$ for all~$i$.
The variable~$z_0$ will serve a homogenization variable for polynomials
that belong to~$R$.

\begin{definition}
\label{def:ophat}
We define the following \df{operators} $R\to R$,
where $f\in R$, $r\in \gR$, $c\in \C$,
$\deg f$ is the degree of $f$ when considered as a polynomial in~$x_0$, $y_0$,
and $\delta(r,f)$
is the maximal integer \st $r^{\delta(r,f)}$ is a monomial factor of~$f$:
\[
\begin{array}{rcl}
\oH_r(f)     & := & \left(z_0^{\deg f}\cdot f
                \sub{x_0\to \frac{x_0}{z_0},~y_0\to \frac{y_0}{z_0}}\right)
                \sub{r\to 1, z_0\to r},\\
\oF_r(f)   & := & r^{-{\delta(r,f)}}\cdot f,\\
\oM_c(f)   & := & f\sub{y_0\to x_0y_0 + c},\\
\oN_c(f)   & := & f\sub{y_0\to y_0 + c,~ x_0\to x_0y_0}.\\
\end{array}
\]
We shall denote
$\oH_{x_0}$, $\oM_\ii$, $\oN_\ii$
by
$\oH$, $\oM$, $\oN$, \resp.
\END
\end{definition}

The polynomial $\oH_r(f)$ is obtained by first homogenizing a polynomial $f$
and then dehomogenizing with respect to a variable determined by~$r$.
This was used in \SEC{bp} to determine base points at infinity
for algebraic series associated to calligraphs.
In order to determine the infinitely near base points,
we apply the compositions $\oF_r\circ\oM_c$ and $\oF_r\circ\oN_c$,
where $\oF_r$ removes a monomial factor (see \RMK{blowup}
for the algebro geometric interpretation of these factors).
We make this more explicit in the following example.

\begin{example}
\label{exm:ophat}
Suppose that $\cG$ is defined by the calligraph $\cR$ in \FIG{cal} with associated series~$\Gamma_\fH$.
Let $p:=(0,\ii)$ and $e:=\{0,2\}$.
We identify $x$ and $y$ in \SEC{bp} with $x_0$ and $y_0$, \resp.
Recall from \EXM{R}, together with \EXMS{exm:bp,exm:hbp}, that
\begin{gather*}
\gamma_i(\fH)=\set{f_i\sub{\ell_e\to\lambda}}{\lambda\in\C},\\
\alpha_p^1\circ \gamma_1(\fH)=\set{g\sub{\ell_e\to\lambda}}{\lambda\in\C},\qquad
\beta_p^1\circ \gamma_1(\fH)=\set{h\sub{\ell_e\to\lambda}}{\lambda\in\C},
\end{gather*}
where
\begin{gather*}
f_0:=(x_0-1)^2+y_0^2-\ell_e^2,\qquad
f_1:=(1-x_0)^2+y_0^2-\ell_e^2\,x_0^2,
\\
f_2:=(x_0-y_0)^2+1-\ell_e^2\,x_0^2,
\\
g:=-2 + x_0 + 2\ii\,y_0 + x_0\,y_0^2 - \l_e^2\,x_0
\quad\text{and}\quad
h:=2\ii - 2x_0 + y_0 + x_0^2y_0 - \l_e^2\,x_0^2\,y_0.
\end{gather*}
We now verify that
\[
\mu(e)=f_0,\quad
\oH(f_0)=f_1,\quad
\oH_{y_0}(f_0)=f_2,\quad
\oF_{x_0}\circ\oM\circ\oH(f_0)=g,\quad
\oF_{y_0}\circ\oN\circ\oH(f_0)=h.
\]
In particular, $\oH(f_0)$ is equal to $f_1=f_{\text{hom}}(x_0,1,y_0)$,
where
\[
f_{\text{hom}}\in \C[z_0,z_1,z_2,\l_e:e\in\E]
\]
is the homogenization of~$f_0$
with respect to the variables~$x_0$ and~$y_0$.
Indeed, this corresponds to $\gamma_1(\fH)$.
Similarly, $\oM(f_1)=\oM\circ\oH(f_0)=f_1(x_0,x_0y_0+\ii)$.
Since $p$ is a base point of $\gamma_1(\fH)$ with multiplicity $m=1$,
we find that $f_1(x_0,x_0y_0+\ii)$ has $x_0^m$ as a factor.
We remove this monomial factor using the operator~$\oF_{x_0}$
so that $\oF_{x_0}\circ\oM\circ\oH(f_0)$ corresponds to $\alpha_p^1\circ \gamma_1(\fH)$.
Notice that we can choose for~$m$ in \DEF{ab} the maximal integer
\st the remainders of the polynomial quotients vanish identically.
The reason is that the polynomial~$f_0$ has no $(x_0)$-factor or $(y_0)$-factor,
as this would mean that two general coupler curves of~$\cG$ have a common component.
\END
\end{example}

We recover the base points of coupler curves by
eliminating certain variables from an ideal generated by quadratic polynomials.
For that purpose we consider the lexicographic monomial ordering on $R$ that is induced by the following ordering on the
variables
for all $i,j\in \V$ and $\{a,b\},\{a',b'\}\in\E$
\st
$i<j$, $a>b$, $a'>b'$ and either ($a<a'$) or ($a=a'$ and $b<b'$):
\[
\l_{\{a,b\}}<\l_{\{a',b'\}},\quad \l_{\{a,b\}}<x_i,\quad x_i<x_j,\quad x_i<y_i,\quad y_i<x_j\quad\text{and}\quad y_i<y_j.
\]
If $\A=\{z_0,\ldots,z_n\}$ and $\B=\{z_0,\ldots,z_m\}$
are ordered sets of variables \st $\B\subset \A$ with an ordering $z_0<\ldots<z_n$,
then we denote by $\A\wr\B$ the linear projection
$\C^{|\A|}\to\C^{|\B|}$ that sends
$(z_0,\ldots,z_n)$ to $(z_0,\ldots, z_m)$.
We set $\pi:=\Upsilon_R\wr\Upsilon_S$.

\begin{example}
If $\cG$ is equal to the calligraph $\cC_3$,
then
\[
\V=\{0,3\},~~~~
\E=\{\{3,0\},\{3,1\},\{3,2\}\},~~~~
R=\C[x_0,y_0,x_3,y_3,\l_{30}, \l_{31}, \l_{32}]
\]
and $\l_{30}<\l_{31}<\l_{32}<x_0<y_0<x_3<y_3$.
In this case $\pi=\gR\wr\gS$ is the map $\C^7\to\C^5$
that sends $(\l_{30},\l_{31},\l_{32},x_0,y_0,x_3,y_3)$
to $(\l_{30},\l_{31},\l_{32},x_0,y_0)$.
\END
\end{example}

Let $\wp(R)$ denote the set of all finite subsets of~$R$
and let $\bas{P}$ denote the \df{ideal} in~$R$ generated by~$P\in\wp(R)$.
The map~$\oG\c\wp(R)\to\wp(R)$ assigns to~$P$
the reduced Gr\"obner basis~$\oG(P)$ for the
elimination ideal $\bas{P}\cap S$
\wrt the above lexicographic ordering on $R$.
The map $V\c\wp(R)\to \operatorname{powerset}\left(\C^{|\gR|}\right)$ assigns to~$P$ its \df{zero set}~$V(P)$.
We shall denote $V(\{f_1,\ldots, f_r\})$ by $V(f_1,\ldots,f_r)$.

\begin{notation}
\label{ntn:o}
By abuse of notation we consider the operator $\oH\c R\to R$
also as the map $\oH\c \wp(R)\to\wp(R)$ so that $\oH(P)=\set{\oH(p)}{p\in P}$;
we use the same notation for the other operators in \DEF{ophat}
and thus the composition of~$\oG$ with an operator is defined.
Similarly, we consider
$\mu\c \E\to R$,
as the map
$\mu\c \wp(\E)\to \wp(R)$
so that for example the composition
$\oH\circ\mu$ is defined.
We shall denote by
$\pi\circ V(P)$
the Zariski closure of
$\set{\pi(p)}{p\in V(P)})$, where we
recall that $\pi$ denotes the linear projection~$\gR\wr\gS$.
\END
\end{notation}

We now introduce a map, which recovers base points from~$\oG(P)$.
For all polynomials~$f\in S$ there
exist polynomials $c_\alpha\in\C[x_0,y_0]$ \st
\[
f=
\sum_{\alpha\in\Z_{\geq 0}^\E}c_{\alpha}(x_0,y_0)
\prod_{e\in \E}\l_e^{\alpha_e}.
\]
Notice that $\alpha\c \E\to \Z_{\geq 0}$ is a map and that $\alpha_e=\alpha(e)$ is its evaluation at~$e$.
We call $p$ a \df{base point of} $f$ if $c_\alpha(p)=0$ for all $\alpha\in\Z_{\geq 0}^\E$.
The map $B_t\c\wp(R)\to \operatorname{powerset}(\C^2)$ with $t\in \gS\cup\{0\}$ is defined as
\[
B_t(P):=\set{p\in\C^2\cap V(t)}{p \text{ is a base point of some } f\in P\cap S}.
\]
We denote $B_{x_0}$ by $B$.
Notice that in the definition of~$B_0$ we have that $\C^2\cap V(0)=\C^2$.

\begin{remark}
\label{rmk:B}
If $p\in\C^2$ is a base point of a polynomial $f\in S$, then it is a base point
as defined in \SEC{bp} for the following subset:
\[
\set{f\sub{\l_e\to \lambda_e:e\in \E}}{\lambda\in\C^\E}\subset\C[x_0,y_0],
\]
where $\C^\E$ defines the set of maps with domain~$\E$ and codomain~$\C$.
Geometrically this means that
the base point $p$ is
for all edge length assignments~$\lambda\in\C^\E$
contained in the coupler curve defined by~$V(f\sub{\l_e\to \lambda_e:e\in \E})$.
\END
\end{remark}

\begin{example}
\label{exm:C3}
Suppose that $\cG$ is defined by the calligraph $\cC_3$.
We show that $\cG$ is centric.
First notice that
\[
\mu(\E)=\{(x_0-x_3)^2+(y_0-y_3)^2-\l_{30}^2,\quad
x_3^2+y_3^3-\l_{31}^2,\quad
(x_3-1)^2+y_3^2-\l_{32}^2
\}.
\]
We have $\oG\circ\mu(\E):=\{f\}$, where we consider
$f\in S$ as a polynomial in $x_0$ and $y_0$ with coefficients
depending on the $\{\l_e\}_{e\in \E}$.
Thus, for each choice of edge length assignment~$\lambda\in\C^{\E}$,
the zero set of $f\sub{\l_e\to \lambda_e: e\in\E}$ defines a coupler curve of~$\cG$.
We can factor the polynomial~$f$ as follows:
\[
f=\bigl((x_0-c_x)^2+ (y_0-c_y)^2-\l_{30}^2\bigr)\cdot
\bigl((x_0-c_x)^2+ (y_0+c_y)^2-\l_{30}^2\bigr),
\]
where $c_y:=\left(\l_{31}^2-c_x^2\right)^{\frac{1}{2}}$
and $c_x:=\frac{1}{2}\left(\l_{31}^2-\l_{32}^2+1\right)$.
We verify using the Pythagorean theorem and the law of cosines
that indeed $(c_x,\pm c_y)$ are the centers of the two circles
as depicted in \FIG{intro}.
Similarly to \EXM{ophat},
the set~$\gamma_1(\fH)$ corresponds to~$\oH\circ\oG\circ\mu(\E)=\{f_1\}$, where
\[
f_1:=\bigl((1-c_x x_0)^2+ (y_0-c_y x_0)^2-\l_{30}^2x_0^2\bigr)\cdot
\bigl((1-c_x x_0)^2+ (y_0+c_y x_0)^2-\l_{30}^2x_0^2\bigr).
\]
The 1-fold base points of~$\gamma_1(\fH)$ are $p:=(0,\ii)$ and $\overline{p}=(0,-\ii)$
and indeed we find that $B\circ\oH\circ\oG\circ\mu(\E)=B(\{f_1\})=\{p,\overline{p}\}$.
It is straightforward to verify also the relations in \TAB{C3} for this example.
\begin{table}[!ht]
\caption{See \EXM{C3}. We denote by
$\bp(\fF)$ the base points of the subset $\fF\subset \C[x,y]$.}
\label{tab:C3}
\centering
\vspace{-7mm}
$
\begin{array}{rrcl}
                                   & (\oH\circ \oG\circ\mu(\E))\sub{x_0\to 0} &\supseteq &\{1+y_0^2\}, \\
\bp(\gamma_0(\fH))                =& B_0\circ \oG\circ\mu(\E)                                  & =          & \varnothing,           \\
\bp(\gamma_1(\fH))                =& B\circ \oH\circ \oG\circ\mu(\E)                           & =          & \{(0,\ii),(0,-\ii)\},\\
\bp(\gamma_2(\fH))                =& B\circ \oH_{y_0}\circ \oG\circ\mu(\E)                     & \nsupseteq & \{(0,0)\},           \\
\bp(\alpha_p^1\circ\gamma_1(\fH)) =& B\circ \oF_{x_0}\circ \oM\circ \oH\circ \oG\circ\mu(\E)   & =          & \varnothing,           \\
\bp(\beta_p^1\circ\gamma_1(\fH))  =& B\circ \oF_{y_0}\circ \oN\circ \oH\circ \oG\circ\mu(\E)   & \nsupseteq & \{(0,0)\}.           \\
\end{array}
$
\end{table}
Therefore, it follows from \LEM{centric} that $\cG$ is indeed centric.
Notice that the first condition in~\TAB{C3} is equivalent to Condition~\ref{lem:centric:V} in \LEM{centric}
and that it follows from the fact that $(\oH\circ\mu(\E))\sub{x_0\to 0}\supseteq \{1+y_0^2\}$.
Hence, the first condition can be shown without computing a Gr\"obner basis.
\END
\end{example}

The approach in \EXM{C3} does not directly lead to a general proof that all calligraphs are centric,
as for most calligraphs it is not feasible to compute the Gr\"obner basis~$\oG\circ \mu(\E)$.
To overcome this obstacle we shall modify the operators so that instead
of for example $B\circ \oF_{x_0}\circ \oM\circ \oH\circ \oG\circ\mu(\E)$
we consider $B\circ\oG(P)$, where $P$ is obtained from $\oF_{x_0}\circ \oM\circ \oH\circ\mu(\E)$
using substitutions and removing factors.
In other words, we want the operators $\oH$, $\oM$, $\oN$
to commute with the map~$\oG$ up to removing some factors.
This is the content of \LEM{op} below, but first we account for these factors
in the following definition.

\begin{definition}
\label{def:op}
We follow \NTN{o} and consider the following compositions of
operators in \DEF{ophat} as either $R\to R$ or $\wp(R)\to\wp(R)$:
\[
\begin{array}{rl@{\hspace{1cm}}rl}
G:=   & \oF_{y_0}\circ \oF_{x_0} \circ \oG,  & H_r:= & \oF_{y_0}\circ \oF_{x_0} \circ \oH_r, \\
M_c:= & \oF_{y_0}\circ \oF_{x_0} \circ \oM_c, &
N_c(\cdot):= & (\oF_{y_0}\circ \oF_{x_0} \circ \oN_c(\cdot))\sub{x_0\to y_0, y_0\to x_0},
\end{array}
\]
where $r\in\{x_0,y_0\}$ and $c\in \C$.
We shall denote $H_{x_0}$, $M_\ii$, $N_\ii$ by
$H$, $M$, $N$, \resp.
In addition, we define the operator~$T\c \wp(R)\to\wp(R)$ as
\[
T(P):=\set{f\sub{x_0\to x_0\cdot s,~\l_e\to \l_e \cdot s: e\in \E}}{f\in P},
\]
where $s:=x_0\prod_{u\in U}(y_0-u)$ and $U:=\set{u}{(0,u)\in B\circ G(P)}$.
\END
\end{definition}

Notice that in the operator $N_c$ we interchange $x_0$ and $y_0$ so that always the $x_0$ coordinate of infinitely near base points vanishes identically.
The motivations for the new operator $T$ are clarified in \LEM{T} and \EXM{H}.
Before stating \LEM{op} we first assert in \LEM{G}
that the elimination of variables in an ideal corresponds geometrically to
a linear projection.

\begin{lemma}
\label{lem:G}
Suppose that $\A$ is a set of variables and $\B\subset\A$.
If $\kappa=\A\wr \B$
and $Z\subset \C^{|\A|}$ is a variety with ideal $\bas{P}\subset\C[\A]$,
then the ideal of $\kappa(Z)$ is $\bas{P}\cap \C[\B]$.
In particular, for all $P\in\wp(R)$ we have
\[
V\circ \oG(P)=\pi\circ V(P).
\]
\end{lemma}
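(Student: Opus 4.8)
The plan is to deduce \LEM{G} from the behaviour of vanishing ideals under coordinate projections, together with Hilbert's Nullstellensatz. After relabelling we may write $\A=\{z_0,\dots,z_n\}$ and $\B=\{z_0,\dots,z_m\}$ with $m\le n$, so that $\kappa=\A\wr\B$ is the coordinate projection $\C^{n+1}\to\C^{m+1}$ forgetting $z_{m+1},\dots,z_n$. The elementary observation I would record first is this: for every polynomial $g\in\C[\B]$ and every point $z\in\C^{n+1}$ one has $g(\kappa(z))=g(z)$, because $g$ only involves the coordinates retained by $\kappa$. Consequently, for any subset $W\subseteq\C^{n+1}$, a polynomial in $\C[\B]$ vanishes on $\kappa(W)$ if and only if it vanishes on $W$, so $I(\kappa(W))=I(W)\cap\C[\B]$; since passing to the Zariski closure does not change the vanishing ideal, also $I\bigl(\overline{\kappa(W)}\bigr)=I(W)\cap\C[\B]$.

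Next I would settle the displayed ideal statement. Applying the observation with $W=Z$ and using that $Z$ is a variety with ideal $\bas{P}$, so $I(Z)=\bas{P}$, gives $I\bigl(\overline{\kappa(Z)}\bigr)=\bas{P}\cap\C[\B]$. It then remains to note that $\bas{P}\cap\C[\B]$ is already a radical ideal: if $f^k\in\bas{P}\cap\C[\B]$ then $f^k\in\bas{P}=I(Z)$, which is radical, so $f\in\bas{P}$; combined with $f\in\C[\B]$ this yields $f\in\bas{P}\cap\C[\B]$. Hence, reading ``the ideal of $\kappa(Z)$'' as the vanishing ideal of its Zariski closure (in keeping with the conventions of \NTN{o}), it equals $\bas{P}\cap\C[\B]$ exactly.

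For the ``in particular'' clause I would apply the first part with $\A=\gR$ and $\B=\gS$, so that $\kappa$ becomes $\pi=\gR\wr\gS$. Since $\oG(P)$ is by definition a reduced Gröbner basis of the elimination ideal $\bas{P}\cap S$, it generates that ideal, and hence $V\circ\oG(P)=V(\bas{P}\cap S)$. On the other hand, the observation above applied to $W=V(P)$ together with the Nullstellensatz \citep[Theorem~I.1.3A]{har} gives $\overline{\pi(V(P))}=V\bigl(I(\overline{\pi(V(P))})\bigr)=V\bigl(\sqrt{\bas{P}}\cap S\bigr)$, and from the inclusions $\bas{P}\cap S\subseteq\sqrt{\bas{P}}\cap S\subseteq\sqrt{\bas{P}\cap S}$ one gets $V(\bas{P}\cap S)=V\bigl(\sqrt{\bas{P}}\cap S\bigr)$. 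Therefore $V\circ\oG(P)=\overline{\pi(V(P))}=\pi\circ V(P)=\kappa\circ V(P)$, the last equalities being the closure convention of \NTN{o}.

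I do not expect a genuine obstacle here: the content is standard elimination theory over the algebraically closed field $\C$. The only points that need care are bookkeeping ones — checking that the lexicographic order of \SEC{op} really places the variables of $\gR\setminus\gS$ above those of $\gS$, so that $\bas{P}\cap S$ is indeed the elimination ideal that $\oG$ computes a Gröbner basis of, and keeping track of the deliberately implicit Zariski closures, since a coordinate projection of a variety need not itself be closed.
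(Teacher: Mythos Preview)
Your argument is correct and follows exactly the standard elimination-theoretic route; the paper itself does not give a proof at all but simply cites \citep[Theorem~3 in Section~3.2]{cox}, so you have supplied the details that the reference contains. There is nothing to add.
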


\begin{proof}
See \citep[Theorem~3 in Section~3.2]{cox}.
\end{proof}

\begin{lemma}
\label{lem:op}
If $P,Q\in\wp(R)$ \st $|G(P)|=|G(Q)|=1$ and $V(P)=V(Q)$, then
for all $c\in\C$, $r\in\{x_0,y_0\}$ and $s\in S$ we have
\begin{gather*}
\begin{array}{r@{\,}l@{\hspace{1cm}}r@{\,}l}
V\circ M_c\circ G(P) & = V\circ G\circ M_c(P), & V\circ H_r \circ G(P)  & = V\circ G\circ H_r(P),\\
V\circ N_c\circ G(P) & = V\circ G\circ N_c(P), & V\circ T \circ G(P) & = V\circ G\circ T(P),
\end{array}
\end{gather*}
and $B\circ G(P)=B\circ G(Q)$.
\end{lemma}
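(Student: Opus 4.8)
All four displayed identities say the same thing — that the elimination operator $\oG$ commutes, up to stripping monomial factors in $x_0$ and $y_0$, with each of the substitution operators $\oH_r$, $\oM_c$, $\oN_c$, $T$ — and I would prove them by transferring the statement to geometry via \LEM{G}. Since $G=\oF_{y_0}\circ\oF_{x_0}\circ\oG$ and $|G(P)|=1$, the reduced Gr\"obner basis $\oG(P)$ is a single polynomial $g_P$ (two elements of a reduced basis each a monomial multiple of a common polynomial would have comparable leading terms), so $\bas{P}\cap S=\bas{g_P}$ and, by \LEM{G}, $V(g_P)=\pi\circ V(P)$; write $g_P=x_0^{a}y_0^{b}\,\overline{g}_P$ with $\overline{g}_P$ free of monomial $x_0,y_0$ factors, so $G(P)=\{\overline{g}_P\}$, and likewise $\oG(Q)=\{g_Q\}$, $G(Q)=\{\overline{g}_Q\}$. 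Two facts drive the proof. First, for every $Q'\in\wp(R)$ and $r\in\{x_0,y_0\}$ one has $V(\oF_r(Q'))\subseteq V(Q')$ and $V(\oF_r(Q'))\setminus V(r)=V(Q')\setminus V(r)$; this follows from the set identity $\bigcap_i(A\cup B_i)=A\cup\bigcap_i B_i$ applied with $A=V(r)$. Second, each of $\oH_r$, $\oM_c$, $\oN_c$, $T$ has the form $f\mapsto(\text{monomial in }x_0,y_0)\cdot(f\circ\psi)$ for a rational self-map $\psi$ of $\C^{|\gR|}$ that moves only the coordinates $x_0,y_0$ — and, for $T$, also the $\l_e$ — while fixing the fibre coordinates $x_i,y_i$, $i\in\V\setminus\{0\}$, and whose indeterminacy or non-injectivity locus lies in $V(x_0)\cup V(y_0)$. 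Concretely: $\oH_{x_0}$ is pull-back along the involution $(x_0,y_0)\mapsto(x_0^{-1},y_0x_0^{-1})$ and $\oH_{y_0}$ along $(x_0,y_0)\mapsto(x_0y_0^{-1},y_0^{-1})$; $\oM_c$ and $\oN_c$ are pull-back along the two blow-up charts $(x_0,y_0)\mapsto(x_0,x_0y_0+c)$ and $(x_0,y_0)\mapsto(x_0y_0,y_0+c)$ of \RMK{blowup}, which contract $V(x_0)$ and $V(y_0)$ respectively, and the trailing transposition in $N_c$ is a global coordinate swap; and $T$ is pull-back along the morphism $(x_0,y_0,\l_e)\mapsto(x_0s,\,y_0,\,\l_e s)$ with $s=x_0\prod_{u\in U}(y_0-u)$. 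In each case $\psi$ respects the fibration of $\pi=\gR\wr\gS$, i.e.\ there is a self-map $\overline{\psi}$ of $\C^{|\gS|}$ with $\pi\circ\psi=\overline{\psi}\circ\pi$, hence $\pi(\psi^{-1}(A))=\overline{\psi}^{-1}(\pi(A))$ for all $A$.

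\textbf{The four identities.} I would argue uniformly; take $M_c$ and write $\psi=\psi_{M_c}$, $\overline{\psi}=\overline{\psi}_{M_c}$. Chasing the left side, $V(\oM_c(P))=\psi^{-1}(V(P))$, so by the first fact above, \LEM{G} in the form $V\circ\oG=\pi\circ V$, and the first fact once more, the zero set $V\circ G\circ M_c(P)$ agrees off $V(x_0)\cup V(y_0)$ with $\overline{\pi\bigl(\psi^{-1}(V(P))\bigr)}$, which — since $\psi$ and $\overline{\psi}$ respect the fibration and $\pi\circ V(P)$ is already closed — agrees off $V(x_0)\cup V(y_0)$ with $\overline{\psi}^{-1}\bigl(\pi\circ V(P)\bigr)=\overline{\psi}^{-1}(V(g_P))$. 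On the right side, $M_c\circ G(P)=\{\oF_{y_0}\oF_{x_0}(\overline{g}_P\circ\overline{\psi})\}$, whose zero set agrees off $V(x_0)\cup V(y_0)$ with $\overline{\psi}^{-1}(V(\overline{g}_P))=\overline{\psi}^{-1}(V(g_P))$ by the first fact. Finally both sides are zero sets of polynomials with no monomial $x_0,y_0$ factor, so neither has a component inside $V(x_0)$ or $V(y_0)$; two such zero sets that coincide off $V(x_0)\cup V(y_0)$ are equal. Hence $V\circ M_c\circ G(P)=V\circ G\circ M_c(P)$. The case of $H_r$ differs only by the extra homogenizing monomial, which $\oF_{x_0}$ absorbs; the case of $N_c$ differs only by the trailing transposition, applied to both sides and a global isomorphism; and the case of $T$ is the same but cleaner since $\psi_T$ is a morphism and no indeterminacy intervenes — and here $T$ uses the same $U$ on both sides, because $G(P)=\{\overline{g}_P\}$ has no monomial factors, so $B\circ G(G(P))=B\circ G(P)$.

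\textbf{The base-point identity.} From $V(P)=V(Q)$ and \LEM{G}, $V(g_P)=\pi\circ V(P)=\pi\circ V(Q)=V(g_Q)$, so $g_P$ and $g_Q$ have equal radicals up to a nonzero scalar. In particular $x_0\mid g_P\iff V(x_0)\subseteq V(g_P)\iff V(x_0)\subseteq V(g_Q)\iff x_0\mid g_Q$, and likewise for $y_0$; so the same monomials are stripped and $\overline{g}_P,\overline{g}_Q$ also have equal radicals up to scalar, whence $V(\overline{g}_P)=V(\overline{g}_Q)$. Now $(0,v)\in B\circ G(P)$ precisely when the affine subspace $\{(0,v)\}\times\C^\E$ lies in $V(\overline{g}_P)$, equivalently in $V(\overline{g}_Q)$, i.e.\ precisely when $(0,v)\in B\circ G(Q)$. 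Therefore $B\circ G(P)=B\circ G(Q)$.

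\textbf{Main obstacle.} The delicate step is the middle one: the operators are not isomorphisms, and $\oH_r$ is not even a morphism, so one must separate the honest birational transform of a zero set from the bookkeeping monomial factors the operator introduces and then verify that, after this separation, the two sides of each identity differ only inside $V(x_0)\cup V(y_0)$ — so that the $\oF_{x_0},\oF_{y_0}$ built into both sides erase the discrepancy and leave genuine equality rather than equality up to components at infinity. Pinning down "differs only inside $V(x_0)\cup V(y_0)$" for each $\psi$ — checking that neither the exceptional divisors of the blow-up charts nor the indeterminacy loci of the chart-changes create stray components elsewhere — is where the coordinate-dependent care is needed.
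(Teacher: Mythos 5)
Your proposal is correct and follows essentially the same route as the paper's proof: translate $\oG$ to the linear projection $\pi$ via \LEM{G}, realize each operator geometrically as a fibration-respecting map (the paper's $\eta_{\oO}=\id\times\nu_{\oO}$ is your $\psi$ with $\overline{\psi}$ on the base), and observe that the only discrepancy between $V\circ O\circ G$ and $V\circ G\circ O$ lives in $D=V(x_0y_0)$, which the built-in $\oF_{x_0},\oF_{y_0}$ erase. Your closing step — that two zero sets without components in $D$ that agree off $D$ are equal — plays exactly the role of the paper's equations \EQN{4}, and your radical-chasing argument for $B\circ G(P)=B\circ G(Q)$ spells out in more detail what the paper asserts by pointing to \RMK{B}.
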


\begin{proof}
Recall that $\pi=\gR\wr\gS$
and let $m=|\gR|-|\gS|$ and $n=|\gS|$ \st $\pi\c \C^{|\gR|}\to \C^{|\gS|}$
can be restated as the projection
$\pi\c \C^m\times \C^n\to \C^n$ to the second component.
Let $\eta\c \C^m\times \C^n\dto \C^m\times \C^n$
be a map of the form $\id\times \nu$
for some rational map $\nu\c \C^n\dto \C^n$,
where
$\id\c \C^m\to \C^m$ is the identity map
so that the following diagram is commutative.
\begin{center}
  \begin{tikzpicture}[map/.style={->,dashed},proj/.style={->}]
    \node[] (a) at (0,0) {$\C^m\times \C^n$};
    \node[] (b) at (3,0) {$\C^m\times \C^n$};
    \node[] (c) at (0,-1.5) {$\C^n$};
    \node[] (d) at (3,-1.5) {$\C^n$};
    \draw[map] (a) to node[labelsty,above] {$\eta$} (b);
    \draw[proj] (a) to node[labelsty,left] {$\pi$} (c);
    \draw[proj] (b) to node[labelsty,right] {$\pi$} (d);
    \draw[map] (c) to node[labelsty,below] {$\nu$} (d);
    \node[] at ($(a)!0.5!(d)$) {$\circlearrowleft$};
  \end{tikzpicture}
\end{center}
\vspace{-5mm}
Same as $\pi\circ V(\cdot)$ in \NTN{o}, we let $\eta\circ V(\cdot)$
and $\nu\circ V(\cdot)$ denote the
Zariski closures of~$\set{\eta(p)}{p\in V(\cdot)}$ and $\set{\nu(p)}{p\in V(\cdot)}$, \resp.
Since $\eta$ sends a fiber of $\pi$ to a fiber
we find that
\begin{equation}
\label{eqn:1}
\nu\circ \pi \circ V(P)=\pi\circ \eta\circ V(Q).
\end{equation}
We know from \LEM{G} that
\begin{equation}
\label{eqn:2}
V\circ \oG=\pi\circ V.
\end{equation}
Suppose that $\oO\in\{\oM_c,\oN_c, T\}$
and
notice that if $\eta$ is birational with a polynomial inverse,
then $\eta\circ V(A)=V(\set{f\circ\eta^{-1}}{f\in A})$ for all $A\in \wp(R)$.
Thus, there exist birational maps $\nu_{_\oO}$ and $\eta_{_\oO}$ \st $\eta_{_\oO}=\id\times \nu_{_\oO}$
and
\begin{equation}
\label{eqn:3}
V\circ\oO=\eta_{_\oO}\circ V
\quad\text{and}\quad
V\circ\oO\circ\oG=\nu_{_\oO}\circ V\circ \oG
.
\end{equation}
For example, if $\oO=\oM_c$, then $\nu_{_\oO}^{-1}$ and $\nu_{_\oO}$
maps $(x_0,y_0,\ldots)$ to $(x_0,y_0x_0+c,\ldots)$ and $(x_0,(y_0-c)x_0^{-1},\ldots)$, \resp.
We apply \EQN{1}, \EQN{2} and \EQN{3} for all $\oO\in\{\oM_c,\oN_c,T\}$
and obtain the following sequence of equalities:
\begin{multline*}
V\circ\oO\circ\oG(P)
=\nu_{_\oO}\circ V\circ\oG(P)
=\nu_{_\oO}\circ \pi\circ V(P)
\\
=\pi\circ \eta_{_\oO}\circ V(Q)
=\pi\circ V\circ\oO(Q)
= V\circ\oG\circ\oO(Q)
.
\end{multline*}
We set $O:=\oF_{y_0}\circ\oF_{x_0}\circ\oO$ and $D:=V(x_0y_0)$, and we observe that
\begin{align}
V\circ G(\cdot)\setminus D  &= \pi\circ V(\cdot)\setminus D,\nonumber\\
V\circ O(\cdot)\setminus D  &= \eta_{_\oO}\circ V(\cdot)\setminus D.
\label{eqn:4}
\end{align}
We apply \EQN{1} and the identities at \EQN{4} and deduce that
\begin{multline*}
V\circ O\circ G(P)\setminus D
=\nu_{_\oO}\circ V\circ G(P)\setminus D
=\nu_{_\oO}\circ \pi\circ V(P)\setminus D
\\
=\pi\circ\eta_{_\oO}\circ V(Q)\setminus D
=\pi\circ V\circ O(Q)\setminus D
= V\circ G\circ O(Q)\setminus D
.
\end{multline*}
But this implies that for all $O\in\{M_c, N_c\}$ we have
\[
V\circ O\circ G(P)=V\circ G\circ O(Q).
\]
\newpage
Let us now consider the case where $\oO=\oH_r$ and $O= H_r$.
Recall the embeddings $\gamma_i^*\c \C^2\hookrightarrow\P^2$ for $i\in\{0,1,2\}$ as defined in \SEC{bp}
and let $\eta_r:=\id\times\nu_r$ with $\nu_r:=(\gamma_i^*)^{-1}\circ\gamma_0^*$ and
$(r,i)\in\{(x_0,1),(y_0,2)\}$.
For example, if $r=x_0$, then both $\nu_r$ and $\nu_r^{-1}$ send $(x_0,y_0,\ldots)$
to $(x_0^{-1},y_0x_0^{-1},\ldots)$.
If $r\neq 0$, then $\eta_r\circ V(A)=V(\set{r^{\deg f}\cdot (f\circ\eta_r^{-1})}{f\in A})$
for all $A\in \wp(R)$.
This implies that $V\circ\oH_r(\cdot)\setminus D=\eta_r\circ V(\cdot)\setminus D$ and thus
\[
V\circ H_r(\cdot)\setminus D=\eta_r\circ V(\cdot)\setminus D.
\]
Applying the same arguments as before
we confirm the following assertion
\[
V\circ H_r \circ G(P)=V\circ G\circ H_r(Q).
\]
Since $G(P)=\{f\}$ and a base point of $f\in S$
is expressed in \RMK{B} in terms of zero sets,
we find that $V(P)=V(Q)$
implies that $B\circ G(P)=B\circ G(Q)$.
We verified all assertions and concluded the proof.
\end{proof}

The following lemma shows that
the map $T$ modifies a set of polynomials
\st the base point candidates associated to the input remain candidates for the output.
The motivation for this map is that only after
the transformation we can characterize the base points
for the cases we need to consider.
We clarify this in more detail in \EXM{H}.

\begin{lemma}
\label{lem:T}
If $|G(P)|=1$, then
\[
B\circ G(P)\subseteq B\circ G\circ T(P)
\]
and
$\oG\circ T(P)=\{g\cdot s\}$ for some $g\in S$,
where $s$ is as defined in \DEF{op}.
\end{lemma}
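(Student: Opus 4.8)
The plan is to transfer the problem, via \LEM{op}, from the elimination of $T(P)$ to the substitution that defines $T$ applied to the generator $g$ of the elimination ideal of $P$, and then to read off divisibility by $s$ from the equations cutting out the base points in $B\circ G(P)$. Throughout, since $|G(P)|=1$, write $\oG(P)=\{f\}$ and $G(P)=\oF_{y_0}\circ\oF_{x_0}(\{f\})=\{g\}$, so that $f=x_0^{a}y_0^{b}g$ with $a=\delta(x_0,f)$, $b=\delta(y_0,f)$ and $g$ has no $x_0$- or $y_0$-monomial factor; expand $g=\sum_{\alpha}c_\alpha(x_0,y_0)\prod_{e\in\E}\l_e^{\alpha_e}$, recall $U=\set{u}{(0,u)\in B\circ G(P)}$, $s=x_0\prod_{u\in U}(y_0-u)$, and set $g^{*}:=g\sub{x_0\to x_0\, s,\ \l_e\to\l_e\, s:e\in\E}$.

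First I would record that $B\circ G\circ G(P)=B\circ G(P)$ (since $g\in S$ has no $x_0$- or $y_0$-monomial factor, $\oG(\{g\})$ is $\{g\}$ up to a unit and $G(\{g\})=\{g\}$), so the operator $T$ uses the same polynomial $s$ on $G(P)$ as on $P$; hence $T\circ G(P)=\{g^{*}\}$. Applying \LEM{op} with $Q:=P$ — the hypotheses $|G(P)|=|G(Q)|=1$ and $V(P)=V(Q)$ then hold trivially — yields
\[
V\circ G\circ T(P)=V\circ T\circ G(P)=V(g^{*}).
\]
By \LEM{G} the left-hand side equals $\pi\circ V(T(P))$, the Zariski closure of a projection; this is a hypersurface, and one checks that $\bas{T(P)}\cap S$ is principal, so $\oG\circ T(P)=\{q\}$ is a single polynomial and $V(g^{*})$ is the zero set of $\oF_{y_0}\circ\oF_{x_0}(q)$.

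Next I would prove $s\mid g^{*}$. For $u\in U$, the point $(0,u)$ is a base point of $g$, i.e.\ $c_\alpha(0,u)=0$ for every $\alpha$, which says precisely that $g$ vanishes on $V(x_0,\,y_0-u)\subset\C^{|\gS|}$; hence $g\in\bigcap_{u\in U}\bas{x_0,\,y_0-u}=\bas{x_0,\ \prod_{u\in U}(y_0-u)}$. Writing $g=x_0\,A+\bigl(\prod_{u\in U}(y_0-u)\bigr)\,B$ with $A,B\in S$ and using that the substitution fixes $\prod_{u\in U}(y_0-u)$ while sending $x_0\mapsto x_0 s$, one obtains $g^{*}=\bigl(\prod_{u\in U}(y_0-u)\bigr)\bigl(x_0^{2}A^{*}+B^{*}\bigr)$, so $\prod_{u\in U}(y_0-u)$ divides $g^{*}$; the remaining factor $x_0$ of $s$ is supplied by the substitution $\l_e\to\l_e s$, under which every monomial of $g$ of positive $\l$-degree acquires $s^{|\alpha|}$ and hence an $x_0$, and I would finish by showing that the $\l$-free part $c_0$ of $g$ also contributes an $x_0$ after the substitution, using the explicit form of $P$ (an $\oH$-, $\oM$- or $\oN$-image of $\mu(\E)$) together with $|G(P)|=1$. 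This gives $g^{*}=\tilde g\cdot s$ with $\tilde g\in S$; since $V(s)\subseteq V(g^{*})\subseteq V(q)$ and $s$ is squarefree, $s\mid q$, and therefore $\oG\circ T(P)=\{q\}$ with $s\mid q$, which is the asserted form.

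For the inclusion $B\circ G(P)\subseteq B\circ G\circ T(P)$, take $(0,u)\in B\circ G(P)$, so $u\in U$ and $(y_0-u)\mid s\mid q$. Passing to $G\circ T(P)=\oF_{y_0}\circ\oF_{x_0}(\{q\})$ only strips pure $x_0$- and $y_0$-monomial content from $q$; for $u\neq 0$ the factor $(y_0-u)$ survives, so every $\l$-coefficient of the single element of $G\circ T(P)$ vanishes identically on $\{y_0=u\}$, in particular at $(0,u)$, whence $(0,u)\in B\circ G\circ T(P)$. For $u=0$, $\oF_{y_0}$ removes exactly the single $y_0$ coming from $s$, and I would check that what remains still vanishes at $(0,0)$ on each $\l$-coefficient, using that $c_\alpha(0,0)=0$ for all $\alpha$. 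The step I expect to be the main obstacle is extracting the extra factor $x_0$ of $s=x_0\prod_{u\in U}(y_0-u)$ from $g^{*}$: the factor $\prod_{u\in U}(y_0-u)$ is immediate from the base-point equations, whereas showing that the $\l$-free part $c_0$ yields an $x_0$ after substitution appears to require the particular normal form of $g$ inherited from the operators $H$, $M$, $N$; I would isolate this as a short auxiliary statement about $c_0$, or treat it geometrically through \LEM{G} by controlling $\pi\circ V(T(P))$ along the locus $V(x_0)$.
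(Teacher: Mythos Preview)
Your overall strategy is the paper's strategy: use \LEM{op} to transfer from $\oG\circ T(P)$ to the substitution $T$ applied to the single generator of $G(P)$, expand that generator in powers of the $\l_e$, and read off divisibility by the factors of $s$ from the base-point equations $c_\alpha(0,u)=0$. The paper's write-up is shorter. With $G(P)=\{f\}$ and $f=\sum_\alpha c_\alpha(x_0,y_0)\prod_e\l_e^{\alpha_e}$, the paper invokes \LEM{op} as an \emph{ideal} identity $\bas{T\circ G(P)}=\bas{G\circ T(P)}$ (not merely the zero-set identity you use), so that $\oG\circ T(P)=\{\lambda\cdot f^*\}$ for a monomial $\lambda=c\,x_0^a y_0^b$; every $\alpha\neq 0$ summand of $f^*$ carries $s^{|\alpha|}$, and the paper asserts $c_0(x_0 s,y_0)=h\cdot s$ from $c_0(0,u)=0$ and $s\sub{y_0\to u}=0$. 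The inclusion $B\circ G(P)\subseteq B\circ G\circ T(P)$ is then declared ``a direct consequence of the definitions'': once $G\circ T(P)=\{c\,f^*\}$, the $\l^\alpha$-coefficient of $f^*$ at $(0,u)$ is $c_\alpha(0,u)\,s(0,u)^{|\alpha|}=0$ for every $\alpha$, with no separate treatment of $u=0$ needed.

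Two points where your draft diverges deserve comment. First, working only with $V\circ G\circ T(P)=V\circ T\circ G(P)$ forces you into squarefreeness arguments and a case split at $u=0$; upgrading to the ideal equality as the paper does removes both detours. Second, the ``main obstacle'' you isolate---the factor $x_0$ in $s$---is exactly the step where the paper's justification is thinnest. The reasons the paper gives, $c_0(0,u)=0$ and $s\sub{y_0\to u}=0$, yield $(y_0-u)\mid c_0(x_0 s,y_0)$ for each $u\in U$ (equivalently, your decomposition $g=x_0 A+\prod_u(y_0-u)\,B$), but setting $x_0=0$ in $c_0(x_0 s,y_0)$ returns $c_0(0,y_0)$, which those hypotheses do not force to vanish. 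So your instinct that the $x_0$-factor requires something beyond the bare hypothesis $|G(P)|=1$ is sound; in the paper's applications (\LEM{case5678}) the $x_0$-component is ultimately controlled geometrically through \PRP{vert} rather than by this divisibility alone.
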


\begin{proof}
Let $G(P)=\{f\}$, where
\[
f=
\sum_{\alpha\in\Z_{\geq 0}^\E}c_{\alpha}(x_0,y_0)
\prod_{e\in \E}\l_e^{\alpha_e}.
\]
We know from \LEM{op} that
$\bas{T\circ G(P)}=\bas{G\circ T(P)}$
and thus there exists a monomial $\lambda:=c\cdot x_0^a\cdot y_0^b$
with $c\in \C\setminus\{0\}$ and $a,b\in\Z_{\geq0}$ \st
\[
\oG\circ T(P)=\left\{\lambda
\sum_{\alpha\in\Z_{\geq 0}^\E}
c_{\alpha}(x_0\cdot s,y_0)
\prod_{e\in \E}(\l_e\cdot s)^{\alpha_e}\right\}.
\]
We notice that $c_0(x_0\cdot s,y_0)=h\cdot s$ for some polynomial~$h\in S$,
since $c_0(0,u)=0$ and $s\sub{y_0\to u}=0$ for all $(0,u)\in B(\{f\})$.
From this we deduce that $\oG\circ T(P)=\{g\cdot s\}$ for some $g\in S$.
The assertion
$B\circ G(P)\subseteq B\circ G\circ T(P)$
is now a direct consequence of the definitions.
\end{proof}

The purpose of the following example is to clarify the definitions and lemmas,
and to prepare the reader for the proof strategy in the remaining text.
We shall use \TAB{eqn} which is obtained via straightforward calculations.

\begin{table}[!ht]
\caption{
We list polynomials that
are obtained after applying certain compositions of operators to~$\mu(e)$,
where $e:=\{i,j\}$ is an edge in~$\E$.
Let $c\in\C$ and suppose that $s=x_0\prod_{u\in U}(y_0-u)$ as in \DEF{op}.
For example, we read from this table that if $1\in e$, then
$T\circ M\circ H\circ\mu(e)=h_2$, where $h_2=x_i^2+y_i^2-\l_e^2s^2$.}
\label{tab:eqn}
\centering
$
\begin{array}{rcccc}
                                    & 0\in e & 0,1,2\notin e & 1\in e & 2\in e   \\\hline
\mu(e):                             & f_1  & g_1 & h_1  & k_1    \\
H\circ\mu(e):                       & f_2  & g_1 & h_1  & k_1    \\
H_{y_0}\circ\mu(e):                     & f_3  & g_1 & h_1  & k_1    \\
T\circ M\circ H\circ\mu(e):         & f_4  & g_2 & h_2  & k_2    \\
T\circ N\circ H\circ\mu(e):         & f_5  & g_2 & h_2  & k_2    \\
T\circ M_c\circ M\circ H\circ\mu(e):& f_6  & g_2 & h_2  & k_2    \\
T\circ N_c\circ M\circ H\circ\mu(e):& f_7  & g_2 & h_2  & k_2    \\\hline
\end{array}
$
\\[4mm]
The polynomials in the above table are defined as follows:
\\[2mm]
$
\begin{array}{l}
f_1:=(x_0-x_i)^2+(y_0-y_i)^2-\l_e^2,\\
f_2:=(1-x_ix_0)^2+(y_0-y_ix_0)^2-\l_e^2x_0^2,\\
f_3:=(x_0-x_iy_0)^2+(1-y_iy_0)^2-\l_e^2y_0^2,\\
f_4:=(x_i - \ii y_0 + \ii y_i)(-2  + x_0s(x_i + \ii y_0 - \ii y_i)) - x_0\l_e^2s^3,\\
f_5:=(1 + \ii x_i y_0 - y_i y_0)(2\ii + x_0s(1 - \ii x_i y_0 - y_i y_0))-y_0^2x_0\l_e^2s^3,\\
f_6:=\scale{0.91}{2 \ii (c + \ii x_i - y_i) +
x_0 s( c^2 + x_i^2 + 2 \ii y_0 - 2 c y_i + y_i^2 + x_0y_0 s (2 c  - 2 y_i  + y_0 x_0 s) )
-x_0 \l_e^2 s^3}\\
f_7:=
(c + \ii x_i + x_0 s - y_i)(2 \ii  + y_0 x_0 s (c - \ii x_i + x_0 s - y_i))
- y_0 x_0 \l_e^2 s^3,
\end{array}
$
\\
$
\begin{array}{@{}ll}
g_1:=(x_i-x_j)^2+(y_i-y_j)^2-\l_e^2,    &
g_2:=(x_i-x_j)^2+(y_i-y_j)^2-\l_e^2s^2, \\
h_1:=x_i^2+y_i^2-\l_e^2,                &
h_2:= x_i^2+y_i^2-\l_e^2s^2,            \\
k_1:=(x_i-1)^2+y_i^2-\l_e^2,            &
k_2:=(x_i-1)^2+y_i^2-\l_e^2s^2.         \\
\end{array}
$
\end{table}

\begin{example}
\label{exm:H}
Suppose that $\cG$ is equal to the calligraph $\cH$ in \FIG{HH}.
It follows from \TAB{eqn} that
\begin{gather*}
\begin{array}{r@{\,}c@{\,}l}
\mu(\{0,3\})&=&(x_0-x_3)^2+(y_0-y_3)^2-\l_{30}^2,\\
\mu(\{0,4\})&=&(x_0-x_4)^2+(y_0-y_4)^2-\l_{40}^2,\\
\mu(\{3,4\})&=&(x_3-x_4)^2+(y_3-y_4)^2-\l_{43}^2,\\
\mu(\{3,1\})&=&x_3^2+y_3^2-\l_{31}^2,\\
\mu(\{4,2\})&=&(x_4-1)^2+y_4^2-\l_{42}^2.
\end{array}
\end{gather*}
Suppose that $\Gamma_\fH$ is the series associated to $\cG$
and that $(0,u)$ is a base point of $\alpha_p^m\circ\gamma_1(\fH)$,
where $p=(0,\ii)$ is a base point of~$\gamma_1(\fH)$ with multiplicity~$m$.
If $\cG$ is centric, then $u\in\{0,-\ii\}$ and thus
\[
Z:=B\circ\oF\circ\oM\circ\oH\circ\oG\circ\mu(\E)\subseteq \{(0,0),~(0,-\ii)\}.
\]
Since two general curves in $\Gamma_\fH$
do not contain a common component, we can replace
$\oF\circ\oM$, $\oH$, $\oG$ by $M$, $H$, $G$
so that $B\circ M\circ H\circ G\circ\mu(\E)=Z$.
Let $P:=M\circ H\circ\mu(\E)$, where
\begin{gather*}
\begin{array}{r@{\,}c@{\,}l}
M\circ H\circ\mu(\{0,3\})&=&(x_3-\ii y_0+\ii y_3)(-2+ x_0(x_3+\ii y_0-\ii y_3))- x_0\l_{30}^2,\\
M\circ H\circ\mu(\{0,4\})&=&(x_4-\ii y_0+\ii y_4)(-2+ x_0(x_4+\ii y_0-\ii y_4))- x_0\l_{40}^2,\\
M\circ H\circ\mu(\{3,4\})&=&(x_3-x_4)^2+(y_3-y_4)^2-\l_{43}^2,\\
M\circ H\circ\mu(\{3,1\})&=&x_3^2+y_3^2-\l_{31}^2,\\
M\circ H\circ\mu(\{4,2\})&=&(x_4-1)^2+y_4^2-\l_{42}^2.
\end{array}
\end{gather*}
By applying \LEM{op} two times we deduce that
$B\circ G(P)=Z$.
In this example we prepare the reader for our general strategy
to show that
\[
B\circ G(P)\subseteq \{(0,0),~(0,-\ii)\}.
\]
In order to motivate this strategy, we first investigate a more straightforward approach
that leads to a problem.
Let $\kappa\c\C^{11}\dto\C^2$ be defined as the linear projection~$\Upsilon_R\wr\{x_0,y_0\}$.
We set $L:=V(\set{\l_e-\lambda_e}{e\in \E})$, where $\lambda\in\C^\E$ is a general edge length assignment.
We define $C_\lambda$ as the Zariski closure of~$(V(P)\cap L)\setminus V(x_0)$.
Thus the section $C_\lambda\subset \C^{11}$ of~$V(P)$ is a curve consisting of one or more irreducible components
and its linear projection $\kappa(C_\lambda)\subset \C^2$
passes through the base points in $B\circ G(P)$.
These base points lie by definition on the line~$V(x_0)\subset \C^2$,
which is a component of the linear projection~$\kappa(V(P)\cap L)$.
Thus $V(P)\cap L$ contains aside $C_\lambda$, additional components
in the hyperplane $V(x_0)\subset\C^{11}$ that project to the line~$V(x_0)\subset \C^2$.
We want to show that a base point
has no preimage \wrt the restricted projection $\kappa|_{C_\lambda}$
and thus it is hopeless to recover base points directly from the zero set $V(P)\cap V(x_0)\subset \C^{11}$
with the ideal
\begin{gather*}
\bas{P\cup\{x_0\}}=\langle
x_3-\ii y_0+\ii y_3,~
x_4-\ii y_0+\ii y_4,~\\
(x_3-x_4)^2+(y_3-y_4)^2-\l_{43}^2,~
x_3^2+y_3^2-\l_{31}^2,~
(x_4-1)^2+y_4^2-\l_{42}^2
\rangle.
\end{gather*}
Let $U:=\set{u}{(0,u)\in B\circ G(P)}$ be the set of $y_0$-coordinates
of the base points.
We define the map $\rho\c \C^4\to \C^3$ as
\[
(x_3,y_3,x_4,y_4)
\mapsto
\bigl(
(x_3-x_4)^2+(y_3-y_4)^2,~
x_3^2+y_3^2,~
(x_4-1)^2+y_4^2
\bigr).
\]
Thus $\rho$ is dominant and sends vertex coordinates to the squares of the corresponding edge lengths.
Since $\lambda$ was chosen general we have
\[
(\lambda_{43}^2,\lambda_{31}^2,\lambda_{42}^2)
\notin
\rho(V(x_3-\ii u+\ii y_3,x_4-\ii u+\ii y_4)),
\]
for all $u\in U$,
and thus there does not exist a $q\in C_\lambda$
\st $\kappa(q)=(0,u)$.
This means that the projection $\kappa|_{C_\lambda}$ is not proper.
For an example of an improper map, we may think of
the projection of the hyperbola $\set{(x,y)\in \C^2}{xy=1}$
to the $x$-axis in which case the origin does not have a preimage
(see \citep[Example~II.4.6.1]{har}).

To avoid the above problem of base points not having preimages,
we consider~$T(P)$ instead of~$P$.
This modification does not exclude any candidates for base points since
we know from \LEM{T} that
$B\circ G(P)\subseteq B\circ G\circ T(P)$.
Let the birational map~$\eta\c \C^{11}\dto \C^{11}$ be defined as
\[
(x_0,y_0,x_3,y_3,x_4,y_4,\l_{43},\l_{31},\l_{42})
\mapsto
(x_0 s^{-1},y_0,x_3,y_3,x_4,y_4,\l_{43}s^{-1},\l_{31}s^{-1},\l_{42}s^{-1}).
\]
Notice that $\eta$ sends $V(P)$ to $V\circ T(P)$, but is not defined at the zero set~$V(s)$,
where $s=x_0\prod_{u\in U}(y_0-u)$.
Thus, $V\circ T(P)\cap L$ consists of the image~$\eta(C_\lambda)$
and additional components in~$V(s)$.
The preimages \wrt the linear projection $\kappa\c\C^{11}\dto\C^2$ of base points in~$B\circ G\circ T(P)$
are contained in the zero set~$V\circ T(P)\cap V(x_0)\subset \C^{11}$ with the ideal:
\begin{multline*}
\bas{T(P)\cup\{x_0\}}=\langle
x_0,~x_3-\ii y_0+\ii y_3,~
x_4-\ii y_0+\ii y_4,~\\
(x_3-x_4)^2+(y_3-y_4)^2,~
x_3^2+y_3^2,~
(x_4-1)^2+y_4^2
\rangle.
\end{multline*}
The generators of this ideal
factor into linear factors and thus
there exist linear spaces $W_1,\ldots, W_8$ \st
\[
V\circ T(P)\cap V(x_0)=W_1\cup\cdots\cup W_8.
\]
In the proof of \LEM{case5678} in \SEC{proof} we show
that for each base point $(0,u)$ in $B\circ T\circ G(P)$,
there exists $1\leq i\leq 8$ \st $\kappa(W_i)=(0,u)$.
We deduce from \PRP{vert} in \SEC{proof} that $\kappa(W_i)$
equals either the line $V(x_0)$ or a point in $\{(0,0),(0,-\ii)\}$.
Thus, we conclude that
\[
B\circ G(P)\subseteq B\circ T\circ G(P)\subseteq \{(0,0),~(0,-\ii)\}.
\]
We remark that in \PRP{vert} and \LEM{case5678}
we consider the linear projection $\pi=\Upsilon_R\wr\Upsilon_S$ instead of $\kappa=\Upsilon_R\wr\{x_0,y_0\}$,
but the translation to the setting of this example is straightforward.
\END
\end{example}

\begin{proposition}
\label{prp:cases}
A calligraph $\cG$ is centric if the following 8 conditions hold for all~$c\in\{0,\ii\}$:
\\
$
\begin{array}{rrcl}
1. & (H\circ\mu(\E))\sub{x_0\to 0}                        &\supseteq  & \{1+y_0^2\} ,\\
2. & B_0\circ G\circ \mu(\E)                              &=          & \varnothing,             \\
3. & B  \circ G\circ H\circ\mu(\E)                        &\subseteq  & \{(0,\ii),~(0,-\ii)\}, \\
4. & B  \circ G\circ H_{y_0}\circ\mu(\E)                  &\nsupseteq & \{(0,0)\},  \\
5. & B  \circ G\circ T\circ M\circ H\circ\mu(\E)          &\subseteq  & \{(0,0),~(0,-\ii)\}, \\
6. & B  \circ G\circ T\circ N\circ H\circ\mu(\E)          &\nsupseteq & \{(0,0)\},  \\
7. & B  \circ G\circ T\circ M_c\circ M\circ H\circ\mu(\E) &=          & \varnothing,  \\
8. & B  \circ G\circ T\circ N_c\circ M\circ H\circ\mu(\E) &\nsupseteq & \{(0,0)\}.  \\
\end{array}
$
\end{proposition}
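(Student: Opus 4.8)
The plan is to derive the eight hypotheses of \LEM{centric} for the series $\Gamma_\fH$ associated to $\cG$ from the eight conditions listed in the proposition, and then quote \LEM{centric} to conclude that $\Gamma_\fH$---and hence $\cG$---is centric. The bridge between the two lists is the dictionary illustrated in \EXMS{exm:ophat,exm:C3}. Since $\cG$ is a calligraph, it is a minimally rigid graph minus one edge, so a count of the variables against the number of quadrics $\mu(\E)$ shows that the elimination ideal $\bas{\mu(\E)}\cap S$ is principal; thus $\oG\circ\mu(\E)=\{F\}$ and $\set{F\sub{\l_e\to\lambda_e:e\in\E}}{\lambda\in\C^\E}$ is (the square free part of) the family of defining polynomials of the coupler curves of $\cG$. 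Under this identification $\gamma_0(\fH)$ corresponds to $\oG\circ\mu(\E)$, $\gamma_1(\fH)$ to $\oH\circ\oG\circ\mu(\E)$, $\gamma_2(\fH)$ to $\oH_{y_0}\circ\oG\circ\mu(\E)$, the transform $\alpha_p^m\circ\gamma_1(\fH)$ to $\oF_{x_0}\circ\oM\circ\oH\circ\oG\circ\mu(\E)$, the transform $\beta_p^m\circ\gamma_1(\fH)$ to $\oF_{y_0}\circ\oN\circ\oH\circ\oG\circ\mu(\E)$, and the two further infinitely near transforms to the analogous compositions built from $\oM_c$ and $\oN_c$; taking $m$ maximal is legitimate because no $\mu(e)$ acquires an $(x_0)$- or $(y_0)$-monomial factor under these operators, as two general coupler curves of $\cG$ share no component.

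First I would pull the Gr\"obner elimination to the outside. The intermediate sets of polynomials all have a principal $G$-image with matching zero set, so \LEM{op} applies repeatedly and gives, for example,
\[
B\circ G\circ M\circ H\circ\mu(\E)=B\circ M\circ H\circ G\circ\mu(\E),
\]
and the same with $M$ replaced by $M_c$, by $N$, by $N_c$, or by nothing; the right-hand sides are exactly the base-point sets of the series and of its transforms that occur in \LEM{centric}. Next I would insert the operator $T$ without loss: by \LEM{T} one has $B\circ G(P)\subseteq B\circ G\circ T(P)$, so the $T$-augmented statements of the proposition are at least as strong as their $T$-free counterparts. For the ``$\subseteq\{\dots\}$'' and ``$=\emptyset$'' conditions (3, 5, 7) this is immediate, and for the ``$(0,0)\notin\cdot$'' conditions (6, 8; condition 4 carries no $T$) it is again the safe direction, since a subset of a set avoiding $(0,0)$ still avoids $(0,0)$. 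The operator $T$ does not change the answer; it is present only because the base points must survive the linear projection used in \SEC{proof}, which is improper without it (see \EXM{H}).

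With the dictionary and these two reductions in hand, each condition of the proposition supplies the corresponding hypothesis of \LEM{centric}. Condition 1 is essentially automatic: vertex $0$ is incident to some edge $\{0,j\}$ and $(\oH\circ\mu(\{0,j\}))\sub{x_0\to 0}=1+y_0^2$, which controls the behaviour at infinity of the coupler polynomial, so a general coupler curve meets the line at infinity only at the cyclic points. Condition 2 gives base-point freeness of $\gamma_0(\fH)$; conditions 1 and 3 together give that $\gamma_1(\fH)$ has an $m$-fold base point precisely at the cyclic point $(0,\ii)$ (and at its conjugate); condition 4 gives that $(0,0)$ is not a base point in the $\gamma_2$-chart; and conditions 5--8 force the infinitely near base points to be 1-centric or 2-centric and forbid any base point infinitely near to those. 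By \LEM{centric} the series $\Gamma_\fH$ is centric, hence $\cG$ is centric by definition.

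The step I expect to be the main obstacle is the bookkeeping behind the dictionary of the first paragraph: one must check that the monomial-factor removals hidden in the bar-less operators $G,H,M,N$ (and the $x_0\leftrightarrow y_0$ swap built into $N_c$) align the locus $\{x_0=0\}$ exactly with the set where $B=B_{x_0}$ searches for base points, and that the maximal-$m$ convention is used consistently across both levels of ``infinitely near'', so that, for instance, condition 5 genuinely encodes ``$q$ is 1- or 2-centric'' rather than a shifted multiplicity. Once that is in place, the proof is a mechanical application of \LEM{op}, \LEM{T}, and the definitions of \SEC{bp} and \SEC{op}.
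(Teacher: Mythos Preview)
Your proposal is correct and follows essentially the same route as the paper: establish the dictionary between $\bp(\gamma_i(\fH))$, $\bp(\alpha_p^m\circ\gamma_1(\fH))$, etc.\ and the hatted operator compositions (as in \EXMS{exm:ophat,exm:C3}), replace the hatted operators by their bar-less versions using the ``no common component'' argument, pull $G$ outside via \LEM{op}, absorb the insertion of $T$ via the inclusion in \LEM{T}, handle the $x_0\leftrightarrow y_0$ swap built into $N_c$ so that $B_{y_0}$ becomes $B$, and finish with \LEM{centric}. The paper organises this dictionary in a table (\TAB{cases}) rather than prose, but the logical content and the key lemmas invoked are the same.
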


\begin{proof}
Let $\Gamma_\fH$ be the series associated to $\cG$ as characterized by \LEM{dim}\ref{lem:dim:c}.
It is straightforward to see that Condition~1 implies that
the curves in $\Gamma_\fH$ meet the line at infinity
only at the cyclic points (see \EXM{C3}).
The first equality in each row of \TAB{cases} is a direct consequence
of the definitions (see \EXM{ophat}).
Thus, if the remaining set relations in \TAB{cases} hold, then the assertions of this proposition
follow from \LEM{centric}.
Two general curves in~$\Gamma_\fH$ do not have a common component,
and thus their defining polynomials
do not contain a $(x_0)$-factor or $(y_0)$-factor.
Hence, we can replace $\oG$, $\oH$, $\oF_{x_0}\circ \oM$, $\oF_{y_0}\circ\oN$
by $G$, $H$, $M$, $N$, \resp.
Moreover, we can replace $B_{y_0}$ with $B$,
since the operator~$N_c$ interchanges $x_0$ and $y_0$.
The set relations in \TAB{cases} are now a consequence of \LEMS{lem:op,lem:T},
and thus we concluded the proof.
\end{proof}

\begin{table}[!ht]
\caption{See the proof of \PRP{cases}. Let $p:=(0,\ii)$, $q:=(0,c)$ and $m,n\in\Z_{\geq 0}$.
Let $\bp(\fF)$ denote the base points of the subset $\fF\subset \C[x,y]$.
The set relation~$\A\propto\B$ for $\A,\B\subset\C^2$ indicates that $\A\subseteq \set{(y,x)}{(x,y)\in\B}$.
}
\label{tab:cases}
\centering
\scalebox{0.80}{$
\begin{array}{rrcl}
\bp(\gamma_0(\fH))                              =& B_0\circ \oG\circ \mu(\E)                                                 &=        & B_0\circ G\circ\mu(\E)                                \\
\bp(\gamma_1(\fH))                              =& B\circ \oH\circ \oG\circ \mu(\E)                                          &=        & B\circ G\circ H\circ \mu(\E)                          \\
\bp(\gamma_2(\fH))                              =& B\circ \oH_{y_0}\circ \oG\circ \mu(\E)                                        &=        & B\circ G\circ H_{y_0}\circ \mu(\E)                        \\
\bp(\alpha_p^m\circ\gamma_1(\fH))               =& B\circ \oF_{x_0}\circ \oM\circ \oH\circ \oG\circ\mu(\E)                         &\subseteq& B\circ G\circ T\circ M\circ H\circ\mu(\E)             \\
\bp(\beta_p^m\circ\gamma_1(\fH))                =& B_{y_0}\circ \oF_{y_0}\circ \oN\circ \oH\circ \oG\circ\mu(\E)                     &\propto  & B\circ G\circ T\circ N\circ H\circ\mu(\E)           \\
\bp(\alpha_q^n\circ\alpha_p^m\circ\gamma_1(\fH))=& B\circ \oF_{x_0}\circ \oM_c\circ \oF_{x_0}\circ \oM\circ \oH\circ \oG\circ\mu(\E)     &\subseteq& B\circ G\circ T\circ M_c\circ M\circ H\circ\mu(\E)    \\
\bp(\beta_q^n\circ\alpha_p^m\circ\gamma_1(\fH))= & B_{y_0}\circ \oF_{y_0}\circ \oN_c\circ \oF_{x_0}\circ \oM\circ \oH\circ \oG\circ\mu(\E) &\propto  & B\circ G\circ T\circ N_c\circ M\circ H\circ\mu(\E)
\end{array}
$}
\end{table}

\section{All calligraphs are centric}
\label{sec:proof}

In this section we show that the eight sufficient conditions for centricity in \PRP{cases}
hold for all calligraphs.
We then conclude the proof of the main results \THM{class} and \COR{class}
by referring to \PRP{pseudo} in \SEC{bp}.
The eight conditions of \PRP{cases} are proven in \LEMS{lem:case2,lem:case134,lem:case5678}.
\LEM{case5678} depends on \PRP{vert}, which is proven in \APP{lines}.

We assume the notation of~\SEC{op}.
The following lemma shows that the base points of series associated to calligraphs
lie on the line at infinity.

\begin{lemma}
\label{lem:case2}
$B_0\circ G\circ \mu(\E)=\varnothing$.
\end{lemma}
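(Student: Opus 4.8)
The plan is to show that the coupler curves of $\cG$ have no common affine point, i.e.\ that the defining polynomial of the coupler variety has no affine base point. First I would record the standard reductions. As in \EXM{C3}, $\oG\circ\mu(\E)$ consists of a single polynomial $f\in S=\C[x_0,y_0,\l_e:e\in\E]$, and $f$ has no $(x_0)$- or $(y_0)$-factor (otherwise two general coupler curves of $\cG$ would share a component), so that $G\circ\mu(\E)=\{f\}$ and $B_0\circ G\circ\mu(\E)$ is exactly the set of points $p=(x_p,y_p)\in\C^2$ with $f(p,\l)=0$ for \emph{every} $\l\in\C^\E$. By \LEM{G} we have $V(f)=\overline{\pi(V(\mu(\E)))}$ with $\pi=\gR\wr\gS$, and — as recalled in the proof of \LEM{trans} — for a general edge length assignment $\omega\in\Omega_\cG$ the specialization $V(f(\cdot,\omega))$ equals the coupler curve $\tt_\omega(\cG)$.

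Now suppose for contradiction that $p$ is such an affine base point. Since $f(p,\omega)=0$ for all $\omega$, in particular $p\in V(f(\cdot,\omega))=\tt_\omega(\cG)$ for general $\omega\in\Omega_\cG$, so for general $\omega$ there is a realization $\xi\in\Xi_\cG^\omega$ with $\xi(0)=p$. I would then make a dimension count. Let $X_p$ be the set of realizations of $\cG$ that pin vertex $0$ at $p$ (with vertices $1,2$ at their standard positions), regarded as the subvariety $V(\mu(\E))\cap V(x_0-x_p,\,y_0-y_p)$ of $\C^{|\gR|}$. Each defining quadric of $V(\mu(\E))$ contains a distinct length variable $\l_e$ occurring squared, so for every choice of the coordinates $(x_i,y_i)_{i\in\V\setminus\{0\}}$ there are finitely many (and nonzero) values of the $\l_e$ compatible with it; hence the projection of $X_p$ forgetting the $\l_e$ is onto $\C^{2(|\vv(\cG)|-3)}$ with finite fibres, and $\dim X_p=2(|\vv(\cG)|-3)$. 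Since a calligraph is a minimally rigid graph with one edge removed, $|\ee(\cG)|=2|\vv(\cG)|-4$, whence $|\E|=2|\vv(\cG)|-5$ and $\dim X_p=|\E|-1$.

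Finally, set $\Lambda:=\{\omega\in\C^\E:\ \exists\,\xi\in\Xi_\cG^\omega\text{ with }\xi(0)=p\}$, so that $\pi(X_p)=\{p\}\times\Lambda$ and therefore $\dim\overline{\Lambda}\le\dim X_p=|\E|-1$; in particular $\overline{\Lambda}$ is a proper subvariety of $\C^\E$. But the second paragraph showed that $\Lambda$ contains the general point of $\C^\E$, so $\overline{\Lambda}=\C^\E$, a contradiction. Hence $\cG$ has no affine base point and $B_0\circ G\circ\mu(\E)=\emptyset$. The only non-routine ingredient is the identification, for general $\omega$, of the specialization $V(f(\cdot,\omega))$ with the coupler curve $\tt_\omega(\cG)$ — this is what lets one pass from the algebraic base-point condition on $f$ to the geometric statement that $p$ lies on all coupler curves — but it is already built into the construction in \SEC{bp} and used in \LEM{trans}, so the remaining steps are pure dimension bookkeeping.
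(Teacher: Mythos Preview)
Your dimension count on $X_p$ is correct and is essentially the content of the paper's Claim~1: the set of edge-length assignments $\omega$ for which there actually exists a realization with $\xi(0)=p$ has dimension at most $|\E|-1$ in $\C^\E$. The problem is the step that precedes it. From $f(p,\omega)=0$ for all $\omega$ you conclude that $p\in\tt_\omega(\cG)$ for general $\omega$; but elimination only gives $V(f(\cdot,\omega))=\overline{\tt_\omega(\cG)}$ for general $\omega$, not equality with the image itself. The coupler curve is by definition the \emph{image} of the projection $C_\omega\to\C^2$, and the paper explicitly warns (see \EXM{H}) that such projections can fail to be proper, so a point in the closure need not lift to a realization. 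Your appeal to \LEM{trans} does not close this gap: the sentence there asserting $\tt_\omega(\cG)=V(F(a,z))$ is a notational convenience for the purposes of that lemma, not a proved set-theoretic equality, and its truth is exactly what is at stake here.

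This is precisely why the paper replaces your direct implication by a perturbation argument. Knowing only that $(\alpha,\beta)$ lies in the Euclidean closure of $\tt_{\tilde\lambda}(\cG)$ for a general $\tilde\lambda$, one picks a realization whose vertex~$0$ sits arbitrarily close to $(\alpha,\beta)$, and then adjusts \emph{only} the lengths of the edges incident to~$0$ so that vertex~$0$ lands exactly on $(\alpha,\beta)$. The resulting edge-length vector lies in an arbitrarily small Euclidean ball $L_\delta$ around $\tilde\lambda$, yet now admits a realization at $(\alpha,\beta)$ --- contradicting the dimension bound, which forces the set of such $\omega$ to be Zariski-thin and hence avoidable by $L_\delta$. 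In short: your dimension bookkeeping is right, but the bridge from ``base point of $f$'' to ``lies on $\tt_\omega(\cG)$ for general $\omega$'' needs the paper's perturbation step, not a citation.
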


\begin{proof}
We suppose by contradiction that $(\alpha,\beta)\in B_0\circ G\circ\mu(\E)$ is a base point.

Let
$C_{\lambda}:=V(\{\mu(e)\sub{\l_e\to\lambda_e}:e\in \E\})$
for all $\lambda\in \C^\E$ and
let $\kappa\c\C^{2|\V|}\to\C^2$ be defined as the linear projection $\set{x_i,y_i}{i\in \V}\wr\{x_0,y_0\}$.
We denote the Zariski closure of the linear projection~$\kappa(C_\lambda)$ by $C'_\lambda$.
Suppose that $\tla\in\C^\E$ is a general choice of edge length assignment
and let $L_\delta:=\set{\lambda\in\C^\E}{|\lambda_e-\tla_e|\leq\delta}$
with $\delta\in \R_{>0}$.
Recall from \RMK{B} that $C'_\tla\subset\C^2$ is a coupler curve that passes through the
base point $(\alpha,\beta)\in\C^2$.

\textbf{Claim 1.}
\textit{There exists a $\delta\in\R_{>0}$ \st
for all $\lambda\in L_\delta$
and general $q\in C'_\lambda$,
we have
$|\kappa^{-1}(q)\cap C_\lambda|>0$
and
$|\kappa^{-1}(\alpha,\beta)\cap C_\lambda|=0$.}
\\
Since $q$ is general in the linear projection~$C'_\lambda$, it must have a preimage in $C_\lambda$ and thus $|\kappa^{-1}(q)\cap C_\lambda|>0$.
For the remaining assertion, we consider the map~$\rho\c \C^{2|\V|}\to\C^\E$
that sends $(x_i,y_i)_{i\in\V}$ to $((x_i-x_j)^2+(y_i-y_j)^2))_{\{i,j\}\in\E}$.
Notice that $C_\lambda\subset \C^{2|\V|}$ corresponds to the fiber $\rho^{-1}(\lambda)$.
It follows from the definition of calligraphs that $|\E|=2|\V|-1$
and we know from \LEM{dim}\ref{lem:dim:a} that $\dim C_\lambda=1$.
We deduce that $\rho$ is dominant
and thus the image via $\rho$ of the codimension two set~$\set{(x,y)\in\C^{2|\V|}}{x_0=\alpha,~y_0=\beta}$
has codimension at least one in~$\C^\E$.
In other words, the set~$W:=\set{\rho(x,y)}{(x_0,y_0)\neq (\alpha,\beta)}$ is Zariski dense in~$\C^\E$.
This implies that $|\kappa^{-1}(\alpha,\beta)\cap C_\lambda|=0$ for all $\lambda\in W$.
There exists a radius~$\delta$ \st $L_\delta\subset W$
and thus we conclude that Claim~1 holds.

Suppose that $p:=\left(\tx_i,\ty_i\right)_{i\in \V}$ is a point in~$C_\tla$
\st $|\tx_0-\alpha|<\e$ and $|\ty_0-\beta|<\e$ for some $\e\in\R_{>0}$.
Recall that for all $e\in \E$ \st $0\in e$, we have:
\[
(\tx_0-\tx_i)^2+(\ty_0-\ty_i)^2=\tla_e^2.
\]
It follows from Claim~1 with $q=\kappa(p)$ that
we can choose $\e>0$ arbitrary small
and that
$|\kappa^{-1}(\alpha,\beta)\cap C_\lambda|=0$
for all $\lambda\in L_\delta$, where $\delta>0$ is small enough.
On the other hand, there exists a $\lambda'\in \C^\E$ \st
for all $e\in \E$ we have
\[
\lambda'_e=0
~~\text{ if } 0\notin e,
\quad\text{and}\quad
(\alpha-\tx_i)^2+(\beta-\ty_i)^2=(\tla_e+\lambda'_e)^2
~~\text{ if } 0\in e.
\]
By choosing $\e$ very small we can ensure that $\tla+\lambda'\in L_\delta$.
We arrived at a contradiction with Claim~1, since $|\kappa^{-1}(\alpha,\beta)\cap C_{\tla+\lambda'}|\geq 1$
by construction.
Hence, the base point $(\alpha,\beta)$ cannot exist and we concluded the proof
of the main assertion.
\end{proof}

\begin{lemma}
\label{lem:case134}
~
\begin{Menum}
\item $(H\circ\mu(\E))\sub{x_0\to 0}\supseteq \{1+y_0^2\}$.\label{it:case134:H}
\item $B\circ G\circ H\circ \mu(\E)\subseteq \{(0,\ii),(0,-\ii)\}$.\label{it:case134:BGH}
\item $B\circ G\circ H_{y_0}\circ\mu(\E) \nsupseteq \{(0,0)\}$.\label{it:case134:BGHy}
\end{Menum}
\end{lemma}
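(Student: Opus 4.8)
The plan is as follows. Part \ref{it:case134:H} is a short computation. First, vertex $0$ has degree at least one in $\cG$: for any $v\notin\vv(\cG)$ the graph $\cG\cup\cC_v$ is minimally rigid on at least four vertices, hence has minimum degree $\geq 2$, while in $\cC_v$ the vertex $0$ is adjacent only to $v$; so there is an edge $e\in\E$ with $0\in e$. Writing $i$ for the other endpoint of $e$ (and substituting the constants $(x_1,y_1)=(0,0)$ or $(x_2,y_2)=(1,0)$ if $i\in\{1,2\}$), \TAB{eqn} gives $H\circ\mu(e)=(1-x_ix_0)^2+(y_0-y_ix_0)^2-\l_e^2x_0^2$, and substituting $x_0\to 0$ yields $(1-0)^2+(y_0-0)^2-0=1+y_0^2$. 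Hence $1+y_0^2\in(H\circ\mu(\E))\sub{x_0\to 0}$, which proves \ref{it:case134:H}.

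For \ref{it:case134:BGH} and \ref{it:case134:BGHy} I would pass to the series $\Gamma_\fH$ associated to $\cG$. Since two general coupler curves of $\cG$ share no component, their defining polynomials have no $x_0$- or $y_0$-factor, so (exactly as in the proof of \PRP{cases}) the first equalities in the relevant rows of \TAB{cases} apply and give $B\circ G\circ H\circ\mu(\E)=\bp(\gamma_1(\fH))\cap V(x_0)$ and $B\circ G\circ H_{y_0}\circ\mu(\E)=\bp(\gamma_2(\fH))$; compare \EXM{C3}. Now a point $(0,\beta)\in\bp(\gamma_1(\fH))\cap V(x_0)$ satisfies $h(0:1:\beta)=0$ for every $h\in\fH$, so $\gamma_1^*(0,\beta)=(0:1:\beta)$ lies on the line at infinity and on the projective closure of \emph{every} coupler curve $\tt_\omega(\cG)$, in particular of a general one; likewise $(0,0)\in\bp(\gamma_2(\fH))$ would put $\gamma_2^*(0,0)=(0:0:1)$ on the closure of every $\tt_\omega(\cG)$. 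Hence both assertions reduce to the single claim that for general $\omega\in\Omega_\cG$ the projective closure of $\tt_\omega(\cG)$ meets the line at infinity $V(z_0)$ only in the two cyclic points $(0:1:\pm\ii)$: given this, $\bp(\gamma_1(\fH))\cap V(x_0)$ consists only of the $\gamma_1^*$-preimages of the cyclic points, namely $(0,\ii)$ and $(0,-\ii)$, proving \ref{it:case134:BGH}; and $(0:0:1)$, not being cyclic, cannot lie on the general coupler curve, so $(0,0)\notin\bp(\gamma_2(\fH))$, proving \ref{it:case134:BGHy}.

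The remaining claim — that coupler curves of calligraphs are \df{circular} — is the real content, and part \ref{it:case134:H} is its computational shadow: it is precisely ``Condition~1 of \LEM{centric}'', whose proof in \PRP{cases} states that it forces the curves of $\Gamma_\fH$ to meet $V(z_0)$ only at the cyclic points (see also \EXM{C3}). To establish it I would use the isotropic coordinates $p_j:=x_j+\ii y_j$, $q_j:=x_j-\ii y_j$ at each vertex $j$, so that the edge equation for $\{i,j\}$ becomes $(p_i-p_j)(q_i-q_j)=\l_e^2$, with $(p_1,q_1)=(0,0)$ and $(p_2,q_2)=(1,1)$; for an edge at vertex $0$ its only top-bidegree, $\l$-free monomial is $p_0q_0$ with coefficient $1$, and eliminating the interior vertices one checks that the highest-degree form of $\oG\circ\mu(\E)$ in $x_0,y_0$ is a nonzero scalar times a power of $x_0^2+y_0^2=p_0q_0$, so the coupler curve at infinity sits on $p_0q_0=0$, i.e.\ at the cyclic points. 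Equivalently, along a coupler curve the vertex $0$ can escape to infinity only along an isotropic line, since on every boundary edge $\{0,i\}$ the product $(p_0-p_i)(q_0-q_i)$ is pinned and forces one of $p_0,q_0$ to stay bounded. I expect the main obstacle to be making this last step airtight in the combinatorial setting: ruling out that several vertices escape along \emph{different} isotropic lines so that vertex $0$ escapes in a non-isotropic direction. Controlling the interior vertices is where the Laman-sparsity of $\cG$ (hence of $\cG\setminus\{0\}$, which therefore realizes a Zariski-dense set of edge-length vectors) must be invoked.
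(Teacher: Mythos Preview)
Your proof of \ref{it:case134:H} is correct and essentially the paper's, with the welcome extra step of checking that vertex~$0$ actually has an incident edge in~$\E$.

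For \ref{it:case134:BGH} and \ref{it:case134:BGHy} you take a considerably longer route than the paper. The paper never leaves the algebraic setup of~\SEC{op}: since $1+y_0^2\in(H\circ\mu(\E))\sub{x_0\to 0}$ by part~\ref{it:case134:H}, one has $1+y_0^2\in\bas{H\circ\mu(\E)\cup\{x_0\}}\cap S$, and hence $1+y_0^2$ lies in $\bas{G\circ H\circ\mu(\E)\cup\{x_0\}}$; so any base point $(0,u)$ must satisfy $1+u^2=0$, which is~\ref{it:case134:BGH}. For~\ref{it:case134:BGHy} the symmetric observation from \TAB{eqn} is $f_3\sub{y_0\to 0}=1+x_0^2$, whence any base point of the form $(u,0)$ has $1+u^2=0$; in particular $u\neq 0$, ruling out~$(0,0)$. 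That is the entire argument---three lines, no passage through~$\Gamma_\fH$, \TAB{cases}, or coupler curves.

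Your reduction to ``circularity'' is not wrong, and you rightly observe that \ref{it:case134:H} is exactly Condition~1 of \LEM{centric}. But at that point you are already done: the implication ``Condition~1 $\Rightarrow$ only cyclic points at infinity'' is precisely what the paper's short ideal-membership argument above encodes. Your final paragraph---the isotropic-coordinate attack on circularity---is therefore redundant, and moreover incomplete: the obstacle you yourself flag (several interior vertices escaping to infinity along different isotropic directions while vertex~$0$ escapes non-isotropically) is real, and untangling it combinatorially via Laman sparsity would be substantially harder than this lemma warrants. Drop that paragraph; the forward reference to \PRP{cases} is also unnecessary once you argue directly on the elimination ideal as the paper does.
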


\begin{proof}
\ref{it:case134:H}
From \TAB{eqn} we see that $1+y_0^2=f_2\sub{x_0\to 0}\in (H\circ\mu(\E))\sub{x_0\to 0}$.

\ref{it:case134:BGH} It follows from \ref{it:case134:H} that the elimination
ideal $\bas{H\circ\mu(\E)\cup\{x_0\}}\cap S$
contains $1+y_0^2$ as well.
Hence, $1+y_0^2$ is in the ideal $\bas{G\circ H\circ\mu(\E)\cup\{x_0\}}\subset S$
so that if $(0,u)\in B\circ G\circ H\circ \mu(\E)$, then $u=\pm\ii$.

\ref{it:case134:BGHy}
Using \TAB{eqn} we find that $1+x_0^2\in (H_{y_0}\circ\mu(\E))\sub{y_0\to 0}$.
Similarly as in the proof of \ref{it:case134:BGH} we deduce that
if $(u,0)\in B_0\circ G\circ H_{y_0}\circ\mu(\E)$, then $u=\pm\ii\neq 0$.
\end{proof}

\begin{proposition}
\label{prp:vert}
If $P:=T\circ M\circ H\circ\mu(\E)$, then
the zero set $V(P\cup \{x_0\})$ is a union of
linear spaces $W_1\cup\cdots\cup W_r$
and the linear projection~$\pi(W_i)$ is equal to either
$V(x_0)$, $V(x_0,y_0)$ or $V(x_0,y_0+\ii)$, for all $1\leq i\leq r$.
\end{proposition}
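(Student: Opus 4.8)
The plan is to unwind $P=T\circ M\circ H\circ\mu(\E)$ on the hyperplane $V(x_0)$ using the explicit polynomials in \TAB{eqn}. On $V(x_0)$ we have $s=x_0\prod_{u\in U}(y_0-u)=0$, so each generator of $P$ restricts, according to \TAB{eqn}, to: $-2(x_i-\ii y_0+\ii y_i)$ for an edge $e=\{0,i\}$; $(x_i-x_j)^2+(y_i-y_j)^2$ for an edge $e=\{i,j\}$ with $0,1,2\notin e$; $x_i^2+y_i^2$ for $e=\{1,i\}$; and $(x_i-1)^2+y_i^2$ for $e=\{2,i\}$. The first is a unit times a linear form, and each of the other three factors over $\C$ into a product of two linear forms. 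Hence $V(P\cup\{x_0\})$ is cut out inside $V(x_0)$ by products of linear forms, so it is a finite union of linear subspaces; I take $W_1,\dots,W_r$ to be its irreducible components, each of which is an intersection of $V(x_0)$ with some of those hyperplanes, hence linear. (If the zero set is empty there is nothing to prove.)

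To compute the projections I pass to the linear coordinates $u_i:=x_i+\ii y_i$ and $v_i:=x_i-\ii y_i$ for $i\in\V\setminus\{0\}$, which together with $x_0,y_0$ and the $\l_e$ form a coordinate system on $\C^{|\gR|}$; it is convenient to also write $u_1=v_1:=0$ and $u_2=v_2:=1$. In these coordinates the restricted generators read $u_i=\ii y_0$ for each edge $\{0,i\}$ of $\cG$ (for $i\in\V\setminus\{0\}$ immediately; for $i=1$ this is $y_0=0$; for $i=2$ this is $\ii y_0=1$, i.e. $y_0=-\ii$), and $(u_i-u_j)(v_i-v_j)=0$ for each edge $\{i,j\}$ with $0\notin\{i,j\}$ (the conventions $u_1=v_1=0$, $u_2=v_2=1$ absorb the $h_2$ and $k_2$ cases). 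Thus an irreducible component $W$ of $V(P\cup\{x_0\})$ is obtained by choosing, for each edge $e\neq\{1,2\}$ of $\cG$ with $0\notin e$, one of the two linear equations $u_i=u_j$ (``$u$-type'') or $v_i=v_j$ (``$v$-type''), and intersecting with $V(x_0)$ and with the equations $u_i=\ii y_0$ over all neighbours $i$ of $0$.

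It remains to identify $\pi(W)$, where $\pi=\Upsilon_R\wr\Upsilon_S$ forgets the coordinates $x_i,y_i$ for $i\in\V\setminus\{0\}$. Since no $\l_e$ occurs in any of the above equations, $\pi(W)$ equals $V(x_0)$ intersected with whatever constraint on $y_0$ survives elimination of the $u_i,v_i$ ($i\in\V\setminus\{0\}$). The chosen $v$-equations never involve $y_0$, so they only constrain the (forgotten) $v_i$, and at worst force $W=\emptyset$ when the $v$-type edges connect vertex $1$ to vertex $2$. For the $u$-part, let $H_u$ be the graph on $\vv(\cG)\setminus\{0\}$ whose edges are the $u$-type edges; each $u_i$ is constant on a connected component of $H_u$, equal to $0$ on the component of vertex $1$, to $1$ on the component of vertex $2$, and a free parameter on every other component. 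If vertices $1$ and $2$ lie in one component then $W=\emptyset$; otherwise the equations $u_i=\ii y_0$ over neighbours $i$ of $0$ force $y_0=0$ if some such $i$ lies in the component of $1$, force $y_0=-\ii$ (from $\ii y_0=1$) if some such $i$ lies in the component of $2$, force $0=1$ hence $W=\emptyset$ if both occur, and impose no constraint on $y_0$ otherwise. In each surviving case $\pi(W)$ is $V(x_0)$, $V(x_0,y_0)$ or $V(x_0,y_0+\ii)$, which is the assertion.

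There is no genuine obstacle here: the argument is bookkeeping once one observes that the substitution $u_i=x_i+\ii y_i$ simultaneously factors every quadratic generator into a ``$u$-part'' and a ``$v$-part''. The points requiring care are reading the restrictions off \TAB{eqn} correctly (noting $s|_{V(x_0)}=0$), handling the degenerate edges $\{0,1\}$ and $\{0,2\}$ — which is exactly where the exceptional values $y_0=0$ and $y_0=-\ii$ enter — and noting that the statement is vacuously true when $V(P\cup\{x_0\})=\emptyset$, as happens, for instance, when vertex $0$ is adjacent to both $1$ and $2$.
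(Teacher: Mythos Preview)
Your proof is correct and takes a genuinely different, more streamlined route than the paper's.

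The paper works in coordinates $a_i=-x_i$, $b_i=-\ii y_i$ and, for each sign labeling $\tau$, studies the ideal generated by the linear forms $a_{ij}+\tau(e)\,b_{ij}$. Because these forms mix the $a$'s and $b$'s, the paper develops a combinatorial calculus of \emph{walks} and \emph{routes} (the operators $\Delta_v$) to carry out Gaussian elimination of the $a_i$'s, first reducing to ``western'' and ``eastern'' $\tau$-polynomials (\PRP{B}), then expressing these via the sign-flip function $\chi_{\tau,\rho}$ (\LEM{flip}), and finally extracting the constraint on $b_0$ by a connectivity argument on the sign-$+1$ subgraph $K$ (\PRP{b0}).

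Your change of variables $u_i=x_i+\ii y_i$, $v_i=x_i-\ii y_i$ diagonalizes the problem: each quadratic factor becomes either $u_i-u_j$ or $v_i-v_j$, so the linear system defining a given component splits into a ``$u$-block'' and a ``$v$-block'', with $y_0$ appearing only in the $u$-block through the relations $u_i=\ii y_0$ coming from the edges at~$0$. The elimination is then immediate, and the constraint on $y_0$ is read off from whether a neighbour of $0$ lies in the $u$-component of vertex~$1$ or vertex~$2$. In the paper's language your ``$u$-type'' edges are precisely the $\tau=+1$ edges, and your $u$-graph connectivity criterion is exactly the criterion that appears in the proof of \PRP{b0} --- you just reach it without the intermediate walk machinery.

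What each approach buys: your diagonalization makes the argument short and transparent, and the handling of the degenerate edges $\{0,1\}$, $\{0,2\}$ (which is where the values $y_0=0$ and $y_0=-\ii$ enter) falls out uniformly from the conventions $u_1=0$, $u_2=1$. The paper's walk formalism is heavier but renders the elimination process combinatorially explicit, which could be useful if one needed finer information about the full elimination ideal $\bI_\tau(\Lambda)\cap\bB$ rather than just its intersection with $\C[b_0]$.
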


\begin{proof}
See \APP{lines}.
\end{proof}

\newpage
\begin{lemma}
\label{lem:case5678}
~
\begin{Menum}
\item\label{it:case5678:BGTMH}
$B\circ G\circ T\circ M\circ H\circ\mu(\E)\subseteq\{(0,0),~(0,-\ii)\}$.

\item\label{it:case5678:BGTNH}
$B\circ G\circ T\circ N\circ H\circ\mu(\E)\nsupseteq\{(0,0)\}$.

\item\label{it:case5678:BGTMMH}
$B\circ G\circ T\circ M_c\circ M\circ H\circ\mu(\E)=\varnothing$.

\item\label{it:case5678:BGTNMH}
$B\circ G\circ T\circ N_c\circ M\circ H\circ\mu(\E)\nsupseteq\{(0,0)\}$.
\end{Menum}
\end{lemma}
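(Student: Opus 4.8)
The plan is to handle all four parts by a single mechanism and then feed in \PRP{vert} for the first one. Write $P$ for the composition of operators applied to $\mu(\E)$ in the part under consideration, let $\kappa:=\Upsilon_R\wr\{x_0,y_0\}$ be the projection used in \EXM{H}, and for $\lambda\in\C^\E$ put $L_\lambda:=V(\l_e-\lambda_e:e\in\E)$. As throughout \SEC{op}, the elimination ideal $\bas{\mu(\E)}\cap S$ is principal (see the proof of \LEM{trans}), so the sets $G(\,\cdot\,)$ below are singletons and \LEM{op}, \LEM{T} apply. Since the polynomial $s$ of \DEF{op} has $x_0$ as a factor, the equations cutting out $V(P\cup\{x_0\})$ are obtained from the appropriate entries of \TAB{eqn} by also setting $s=0$, and by inspection they then factor into linear forms over $\C$; hence $V(P\cup\{x_0\})=W_1\cup\cdots\cup W_r$ is a union of affine-linear subspaces. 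The central step I would carry out is to prove the claim: for every $(0,u)\in B\circ G(P)$ there is an index $j$ with $\kappa(W_j)=\{(0,u)\}$, i.e.\ a $W_j$ on which the $y_0$-coordinate is constant. The reason is that, by \RMK{B}, a base point lies on the coupler curve for every edge length assignment, while the substitution built into $T$ (this is its whole point, compare \EXM{H}) ensures the base point is genuinely \emph{attained}: for general $\lambda$ some point of $V(P)\cap L_\lambda$ with vanishing $x_0$-coordinate maps under $\kappa$ to $(0,u)$, hence lies in some $W_j$; and if that $W_j$ did not have $y_0$ constant, then $\pi(W_j)$ would be the full hyperplane $V(x_0)\subset\C^{|\Upsilon_S|}$, which forces $x_0$ to divide $\oG(P)$, so that $W_j$ is removed by the factor $\oF_{x_0}$ in $G=\oF_{y_0}\circ\oF_{x_0}\circ\oG$ and carries no base point of $G(P)$.

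Granting this claim, part \ref{it:case5678:BGTMH} follows at once: for $P=T\circ M\circ H\circ\mu(\E)$, \PRP{vert} tells us that each $\pi(W_j)$ equals $V(x_0)$, $V(x_0,y_0)$ or $V(x_0,y_0+\ii)$, so a $W_j$ with $y_0$ constant has $\kappa(W_j)\in\{(0,0),(0,-\ii)\}$; hence $B\circ G\circ T\circ M\circ H\circ\mu(\E)\subseteq\{(0,0),(0,-\ii)\}$.

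For the other three parts it remains only to read off \TAB{eqn}. Taking $P=T\circ M_c\circ M\circ H\circ\mu(\E)$, respectively $P=T\circ N_c\circ M\circ H\circ\mu(\E)$, with $c\in\{0,\ii\}$, and setting $s=0$, every polynomial of $P\sub{x_0\to 0}$ is one of $2\ii(c+\ii x_i-y_i)$, $(x_i-x_j)^2+(y_i-y_j)^2$, $x_i^2+y_i^2$ or $(x_i-1)^2+y_i^2$; none involves $y_0$, so $y_0$ is free on every $W_j$, and by the claim there is no base point at all, i.e.\ $B\circ G(P)=\emptyset$. This settles parts \ref{it:case5678:BGTMMH} and (a fortiori) \ref{it:case5678:BGTNMH}. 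For part \ref{it:case5678:BGTNH}, put $P=T\circ N\circ H\circ\mu(\E)$; vertex $0$ lies on some edge $e_0\in\E$, and \TAB{eqn} gives $T\circ N\circ H\circ\mu(e_0)=f_5$ with $f_5\sub{x_0\to 0, y_0\to 0}=2\ii\neq 0$, so $V(P\cup\{x_0,y_0\})=\emptyset$ and no $W_j$ meets $\{y_0=0\}$; hence no $W_j$ with $y_0$ constant has $\kappa$-image $(0,0)$, and by the claim $(0,0)\notin B\circ G\circ T\circ N\circ H\circ\mu(\E)$.

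I expect the main obstacle to be the central claim of the first paragraph — concretely, the assertion that after applying $T$ a base point is actually attained by a $y_0$-constant linear piece $W_j$, rather than being lost (as it is without $T$) or merely swept along the line $V(x_0)$. This is exactly the degeneracy the substitution $x_0\mapsto x_0 s$, $\l_e\mapsto\l_e s$ is designed to resolve, and I would prove it through the blow-up picture of \RMK{blowup}: that substitution eliminates the indeterminacy of the projection near the base locus and makes it proper there, so that the fibre over a base point is non-empty and the pertinent components lie inside $V(P\cup\{x_0\})$. Everything else then reduces to the explicit formulas of \TAB{eqn} and to \PRP{vert}, which is proved independently in \APP{lines}.
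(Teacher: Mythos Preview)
Your overall strategy coincides with the paper's: decompose $V(P\cup\{x_0\})$ into linear pieces $W_1,\dots,W_r$, show that every base point $(0,u)\in B\circ G(P)$ is witnessed by some $W_j$ with $\pi(W_j)=V(x_0,y_0-u)$, then invoke \PRP{vert} for the first assertion and read off \TAB{eqn} for the remaining three. Your treatment of the last three assertions matches the paper exactly.

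The gap is in your justification of the central claim. You argue that if the $W_j$ containing your chosen point has $y_0$ non-constant then $\pi(W_j)=V(x_0)$, ``which forces $x_0$ to divide $\oG(P)$, so that $W_j$ is removed by the factor $\oF_{x_0}$ in $G$ and carries no base point''. This step is confused: $\oF_{x_0}$ strips an $x_0$-factor from the single polynomial generating $\oG(P)$, but it does not delete any $W_j$ from $V(P\cup\{x_0\})$, nor does it produce an alternative $W_{j'}$ with $y_0$ constant; the case $\pi(W_j)=V(x_0)$ is simply left standing. The paper avoids this by working from the other side. By \LEM{T} one has $\oG(P)=\{g\cdot s\}$ with $s=x_0\prod_{u\in U}(y_0-u)$, so by \LEM{G} each $V(y_0-u)$ is a full irreducible component of $\pi\circ V(P)$; hence some irreducible piece of $V(P)$ lies inside $V(y_0-u)$ and dominates it. Intersecting that piece with $V(x_0)$ already has $y_0\equiv u$ and lands in $\bigcup_i W_i$, which supplies the required $W_j$. (That this intersection is nonempty is what the $T$-substitution buys: once $s=0$ the generators in \TAB{eqn} are independent of $x_0$ and the~$\l_e$, so $V(P)\cap V(y_0-u)$ carries a free $x_0$-line.) Your blow-up/properness picture is the right geometric intuition, but the concrete lever replacing it here is \LEM{T}.
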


\begin{proof}
\ref{it:case5678:BGTMH}
Suppose that $P:=T\circ M\circ H\circ\mu(\E)$
is characterized as in \TAB{eqn}.
Let $s=x_0\prod_{u\in U}(y_0-u)$ and $U=\set{u}{(0,u)\in B\circ G(P)}$ be as in \DEF{op}.
By \LEM{T} there exists $g\in S$ \st
\[
\oG(P)=\{ g \cdot x_0\cdot  \prod_{u\in U}(y_0-u) \}.
\]
By \LEM{G} this means geometrically that
\[
\pi\circ V(P)=V(g)\cup V(x_0)\cup\left(\bigcup_{u\in U} V(y_0-u)\right).
\]
We know from \PRP{vert} that $V(P\cup\{x_0\})=W_1\cup\cdots\cup W_r$,
where $W_i$ is a linear space for all $1\leq i\leq r$.
Notice that the restriction $\pi|_{W_i}$ is surjective for all $1\leq i\leq r$.
The preimage of
the component $V(x_0)\subset\pi\circ V(P)$
consists of the union of those linear spaces~$W_i$ satisfying $\pi(W_i)=V(x_0)$.
The preimage of the intersection~$V(y_0-u)\cap V(x_0)$ in $\pi\circ V(P)$ consists of the union of
those linear spaces~$W_i$ for which $\pi(W_i)=V(x_0,y_0-u)$.
We conclude from \PRP{vert} that $u\in\{0,-\ii\}$.
Hence, $U\subseteq \{0,-\ii\}$ and therefore
$B\circ G(P)\subseteq \{(0,0),~(0,-\ii)\}$ as was to be shown.

The proofs for the assertions \ref{it:case5678:BGTNH}, \ref{it:case5678:BGTMMH} and \ref{it:case5678:BGTNMH}
are similar to the proof of \ref{it:case5678:BGTMH} except that $P$
is equal to $T\circ N\circ H\circ\mu(\E)$,
$T\circ M_c\circ M\circ H\circ\mu(\E)$ and
$T\circ N_c\circ M\circ H\circ\mu(\E)$, \resp.
See \TAB{eqn} for a characterization of the
the polynomials in these sets.
Since each polynomial in $P\sub{x_0\to 0}$
is either linear or a product of two linear factors,
it follows that $V(P\cup\{x_0\})$ is a union of linear
subspaces $W_1\cup\cdots \cup W_r$.
Thus, $\pi|_{W_i}$ is surjective and for all $u\in U$
there exists $1\leq i\leq r$ \st $\pi(W_i)=V(x_0,y_0-u)$.
For assertion \ref{it:case5678:BGTNH} we observe that $V(P\cup \{x_0,y_0\})=\varnothing$
and thus $0\notin U$ so that $B\circ G(P)\nsupseteq\{(0,0)\}$.
For \ref{it:case5678:BGTMMH} and \ref{it:case5678:BGTNMH} we have
$\pi(W_i)=V(x_0)$ for all $1\leq i\leq r$, because no polynomial
in~$P\sub{x_0\to 0}$ depends on $y_0$.
Therefore, $B\circ G(P)=\varnothing$ for assertions \ref{it:case5678:BGTMMH} and \ref{it:case5678:BGTNMH} so that we
concluded the proof.
\end{proof}

\begin{proof}[Proof of \THM{class} and \COR{class}]
It follows from \PRP{cases}
in combination with \LEMS{lem:case2,lem:case134,lem:case5678}
that all calligraphs are centric.
Thus \THM{class} and \COR{class} are now a direct consequence of \PRP{pseudo}.
\end{proof}

\appendix
\clearpage
\section{Overview notation}
\label{sec:nota}
Below we list an overview of notation that is used across more than one section.
See the assigned sections \SEC{main}, \SEC{bp} and \SEC{op} for the precise definitions.
\begin{center}
\footnotesize
\begin{tabular}{l@{~}l}
\SEC{main}&\\\hline
$\vv(\cG)$, $\ee(\cG)$ & Vertices $\vv(\cG)\subset \Z_{\geq0}$ and edges of a graph~$\cG$. \\
$\cL$, $\cR$, $\cC_v$ & The calligraphs defined in \FIG{cal} with $v\in \Z_{\geq 3}$. \\
$\Omega_\cG$, $\Xi_\cG^\omega$ & Sets of edge length assignments and realizations \\
                               & of a marked graph $\cG$ with edge length assignment $\omega\in \Omega_\cG$. \\
$\cc(\cG)$            & Number of realizations of a graph $\cG$. \\
$\sng C$, $\deg C$, $g(C)$ & Singular locus, degree and geometric genus of a curve~$C$. \\
$[\cG]$, $\tt_\omega(\cG)$, $\mm(\cG)$ & Class, coupler curve and coupler multiplicity of a calligraph~$\cG$. \\[2ex]
\SEC{bp}&\\\hline
$\alpha^m_p$, $\beta^m_p$ & Maps $\C[x,y]\to\C[x,y]$ that perform substitutions and quotients \\
                          & for base point analysis. \\
$\ii$                     & Imaginary unit.\\
$\gamma_0$, $\gamma_1$, $\gamma_2$ & Maps $\C[z_0,z_1,z_2]\to\C[x,y]$ that send a homogeneous polynomial $h$ to \\
                                   & $h(1,x,y)$, $h(x,1,y)$ and $h(y,x,1)$, \resp.\\
$\Gamma_\fH$              & The series associated to a calligraph~$\cG$ with $\fH$ the set of homogeneous  \\
                          & polynomials, whose zero sets are projectivized coupler curves of $\cG$.\\[2ex]
\SEC{op} & \\\hline
$\V$, $\E$ & $\V=\vv(\cG)\setminus\{1,2\}$ and $\E=\ee(\cG)\setminus\{\{1,2\}\}$. \\
$S$, $\Upsilon_S$, $R$, $\Upsilon_R$ & Polynomial rings $S=\C[\Upsilon_S]$ and $R=\C[\Upsilon_R]$ \\
                                     & with $\Upsilon_S\subset \Upsilon_R$ being sets of variables. \\
$\pi=\Upsilon_R\wr\Upsilon_S$ & The linear projection $\C^{|\Upsilon_R|}\to \C^{|\Upsilon_S|}$ that forgets coordinates.\\
$f\sub{a_0\to b_0,\ldots}$ & Substitution operator for polynomials $f\in R$.\\
$\wp(R)$                & The set of all finite subsets of the polynomial ring~$R$. \\
$\mu$                   & Map $\mu\c\wp(\E)\to\wp(R)$. See \NTN{o} as $\mu$ is also the map~$\E\to R$\\
                        & that assigns a quadratic polynomial to an edge!\\
$H_r$, $M_c$, $N_c$, $T$, $G$ & Maps $\wp(R)\to\wp(R)$ that are defined via operators in \DEF{ophat}\\
                        & and \DEF{op}. We abbreviate $H=H_{x_0}$, $M=M_\ii$ and $N=N_\ii$.\\
$B$                     & Map $\wp(R)\to\operatorname{powerset}(\C^2)$ for assigning base points.\\
$V$                     & Map $\wp(R)\to\operatorname{powerset}\left(\C^{|\Upsilon_R|}\right)$ for assigning the zero set.\\
$\pi\circ V(P)$         & The Zariski closure of $\set{\pi(p)}{p\in V(P)}$, see \NTN{o}.

\end{tabular}
\end{center}

\vspace{1cm}
\begin{remark}
\label{rmk:generic}
Our definition of ``general'' in \SEC{class} is closely related
to the notions of \df{generic} in rigidity theory (see for example~\citep[\textsection2]{rigid-jack-2018})
and \df{generic point} in scheme theory (see \citep[Example~II.2.3.4]{har}).
For example, a point $p\in\C^2$ is general with respect to some property,
if $f(p)\neq 0$ for some polynomial $f\in\C[u,v]$ that depends on this property.
In comparison, $p\in\C^2$ is generic in the rigidity theoretic sense
if $f(p)\neq 0$ for all~$f\in\Z[u,v]$.
\END
\end{remark}

\clearpage
\section{Case study for the getNoR algorithm}
\label{sec:tree}

We compute the number of realizations~$\cc(\cG)$ using \ALG{nor},
where the minimally rigid graph $\cG$ is defined in \FIG{large}.
We depict the recursive execution tree of \ALG{nor} in \FIG{tree}
together with references to the figures that depict the corresponding
minimally rigid graphs and calligraphs.

\begin{figure}[!ht]
\centering
\setlength{\tabcolsep}{10mm}
\begin{tikzpicture}[scale=0.3,rotate=-90]
\BUedge \Paf \Pbf;
\BUedge \Paf \Pbe;
\BUedge \Paf \Pae;
\BUedge \Pbf \Pae;
\BUedge \Pbf \Pef;
\BUedge \Pce \Pbe;
\BUedge \Pce \Pef;
\BUedge \Pce \Pde;
\BUedge \Pae \Paa;
\BUedge \Pae \Pbe;
\BUedge \Pbe \Pdd;
\BUedge \Pef \Pea;
\BUedge \Pef \Pde;
\BUedge \Pea \Pdd;
\BUedge \Pea \Pdd;
\BUedge \Pdd \Pde;
\BUedge \Pbe \Pbd;
\BUedge \Pbe \Pcd;
\BUedge \Paa \Pbb;
\BUedge \Paa \Pbc;
\BUedge \Paa \Pkk;
\BUedge \Pea \Pbb;
\BUedge \Pea \Pkk;
\BUedge \Pbc \Pbd;
\BUedge \Pbc \Pcc;
\BUedge \Pkk \Pjj;
\BUedge \Pbb \Pcc;
\BUedge \Pcc \Pjj;
\BUedge \Pjj \Pbd;
\BUedge \Pjj \Pcd;
\BUedge \Pcd \Pbd;
\BUedge \Paa \Pea;
\BIvertW \Paa \
\BIvertN \Pea \
\BIvertW \Paf \
\BIvertN \Pbf \
\BIvertN \Pce \
\BIvertW \Pae \
\BIvertS \Pef \
\BIvertE \Pde \
\BIvertE \Pdd \
\BIvertW \Pbe \
\BIvertW \Paa \
\BIvertN \Pea \
\BIvertE \Pbc \
\BIvertE \Pkk \
\BIvertW \Pbb \
\BIvertN \Pcc \
\BIvertE \Pjj \
\BIvertS \Pbd \
\BIvertS \Pcd \
\end{tikzpicture}
\caption{The minimally rigid graph $\cG$ has 17 vertices.}
\label{fig:large}
\end{figure}

We notice that $\cG$ does not have degree two vertices
and that $(\cG_1,\cG_2)$ is a non-trivial calligraphic split for $\cG$
(see \FIG{level1}).
This step corresponds to the first vertical ``split'' separator in \FIG{tree}.
We have $\cc(\cG)=[\cG_1]\cdot [\cG_2]$ by Axiom~\AXM{2}
and thus we would like to compute the classes
$[\cG_1]$ and $[\cG_2]$ using \ALG{class}.

In order to compute $[\cG_1]$
we first determine the number of realizations
for the minimally rigid graphs $\cG_{1L}=\cG_1\cup\cL$,
$\cG_{1R}=\cG_1\cup\cR$ and $\cG_{1C}=\cG_1\cup\cC_v$ for some vertex $v\notin\vv(\cG_1)$
as is depicted in \FIG{level2a}.
This step is associated to the first vertical ``glue'' separator in \FIG{tree}.

In order to compute $\cc(\cG_{1R})$, $\cc(\cG_{1L})$ and
$\cc(\cG_{1C})$ we call \ALG{nor} three times
and arrive at the
second vertical ``split'' separator in \FIG{tree}.
We continue recursively and thus the remaining
steps are analogous. The captions of the corresponding
figures are self-explanatory.

A minimally rigid graph that corresponds
to one of the 36 leaves of the execution tree
has at most 10 vertices instead of 17.
We either continue recursively or resort to the fall-back algorithm for
computing the number of realizations, namely
Algorithm~\cite{rigid-alg}.
We conclude that $\cc(\cG)=200192$.

\begin{figure}[!hp]
\begin{tikzpicture}[scale=0.9,
		edge/.style={line width=2pt},
		sedge/.style={edge,colG},
		gedge/.style={edge,colB},
		gnode/.style={draw=black,fill=white,minimum width=1cm,font=\scriptsize},
		elabel/.style={font=\scriptsize,black!50!white},
		sgline/.style={line width=0.5pt,dotted,black!50!white},
		ref/.style={black!4!white},
		rlabel/.style={font=\scriptsize,black}
  ]
  \coordinate (levelsep1) at (0,6);
  \coordinate (levelsep2) at (0,4);
  \coordinate (levelsep3) at (0,1);
  \coordinate (levelsep4) at (0,0.65);
  \coordinate (leveldist1) at (3,0);
  \coordinate (leveldist2) at (3,0);
  \coordinate (leveldist3) at (3,0);
  \coordinate (leveldist4) at (3,0);
  \coordinate (up) at (0,12);
  \coordinate (figdown) at (0,-12.5);
  \coordinate (figup) at (0,12.5);

  \draw[sgline] ($0.5*(leveldist1)+(up)$) node[above,elabel] {split} -- ++ ($-2*(up)$);
  \draw[sgline] ($(leveldist1)+0.5*(leveldist2)+(up)$) node[above,elabel] {glue} -- ++ ($-2*(up)$);
  \draw[sgline] ($(leveldist1)+(leveldist2)+0.5*(leveldist3)+(up)$) node[above,elabel] {split} -- ++ ($-2*(up)$);
  \draw[sgline] ($(leveldist1)+(leveldist2)+(leveldist3)+0.5*(leveldist4)+(up)$) node[above,elabel] {glue} -- ++ ($-2*(up)$);
  \node[gnode] (g) at (0,0) {$\mathcal G$};
  \foreach \i in {1,2}
  {
		\node[gnode] (g\i) at ($(leveldist1)+{(-1)^\i}*(levelsep1)$) {$\mathcal G_\i$};
		\draw[sedge] (g)edge(g\i);
		\foreach \j [count=\jc] in {L,R,C}
		{
			\node[gnode] (g\i\j) at ($(g\i)+(leveldist2)-2*(levelsep2)+\jc*(levelsep2)$) {$\mathcal G_{\i\j}$};
			\draw[gedge] (g\i)to node[pos=0.3,above,elabel] {$\mathcal \j$} (g\i\j);
			\foreach \k in {1,2}
			{
				\node[gnode] (g\i\j\k) at ($(g\i\j)+(leveldist3)+{(-1)^\k}*(levelsep3)$) {$\mathcal G_{\i\j\k}$};
				\draw[sedge] (g\i\j)edge(g\i\j\k);
				\foreach \l [count=\lc] in {L,R,C}
				{
					\node[gnode] (g\i\j\k\l) at ($(g\i\j\k)+(leveldist4)-2*(levelsep4)+\lc*(levelsep4)$) {$\mathcal G_{\i\j\k\l}$};
					\draw[gedge] (g\i\j\k)to node[pos=0.8,above=-1pt,elabel] {$\mathcal \l$}(g\i\j\k\l);
				}
			}
		}
  }
  \begin{scope}[on background layer] % References
		\fill[ref] ($(g1)-(0.8,0.8)$) rectangle ($(g2)+(0.8,0.8)$);
		\node[rlabel] at ($(leveldist1)+(figdown)$) {Figure~\ref{fig:level1}};
		\fill[ref] ($(g1L)-(0.8,0.8)$) rectangle ($(g1C)+(0.8,0.8)$);
		\node[rlabel] at ($(leveldist1)+(leveldist2)+(figdown)$) {Figure~\ref{fig:level2a}};
		\fill[ref] ($(g1L1)-(0.8,0.8)$) rectangle ($(g1C2)+(0.8,0.8)$);
		\node[rlabel] at ($(leveldist1)+(leveldist2)+(leveldist3)+(figdown)$) {Figure~\ref{fig:level3a}};
		\foreach \i in {L,R,C}
		{
			\fill[ref] ($(g1\i1L)-(0.8,0.35)$) rectangle ($(g1\i1C)+(0.8,0.35)$);
		}
		\node[rlabel] at ($(leveldist1)+(leveldist2)+(leveldist3)+(leveldist4)+(figdown)$) {Figure~\ref{fig:level4a}};

		\fill[ref] ($(g2L)-(0.8,0.8)$) rectangle ($(g2C)+(0.8,0.8)$);
		\node[rlabel] at ($(leveldist1)+(leveldist2)+(figup)$) {Figure~\ref{fig:level2b}};
		\fill[ref] ($(g2L1)-(0.8,0.8)$) rectangle ($(g2C2)+(0.8,0.8)$);
		\node[rlabel] at ($(leveldist1)+(leveldist2)+(leveldist3)+(figup)$) {Figure~\ref{fig:level3b}};
		\foreach \i in {L,R,C}
		{
			\fill[ref] ($(g2\i1L)-(0.8,0.35)$) rectangle ($(g2\i1C)+(0.8,0.35)$);
		}
		\fill[ref] ($(g2L2L)-(0.8,0.35)$) rectangle ($(g2L2C)+(0.8,0.35)$);
		\node[rlabel] at ($(leveldist1)+(leveldist2)+(leveldist3)+(leveldist4)+(figup)$) {Figure~\ref{fig:level4b}};
  \end{scope}
\end{tikzpicture}
\caption{Execution tree for \ALG{nor} with input $\cG$ and output $\cc(\cG)=200192$.}
\label{fig:tree}
\end{figure}

\begin{figure}[!ht]
\centering
\begin{tikzpicture}[scale=0.4,rotate=-90]
\BVedge \Paf \Pbf;
\BVedge \Paf \Pbe;
\BVedge \Paf \Pae;
\BVedge \Pbf \Pae;
\BVedge \Pbf \Pef;
\BVedge \Pce \Pbe;
\BVedge \Pce \Pef;
\BVedge \Pce \Pde;
\BVedge \Pae \Paa;
\BVedge \Pae \Pbe;
\BVedge \Pbe \Pdd;
\BVedge \Pef \Pea;
\BVedge \Pef \Pde;
\BVedge \Pea \Pdd;
\BVedge \Pea \Pdd;
\BVedge \Pdd \Pde;
\BUedge \Pbe \Pbd;
\BUedge \Pbe \Pcd;
\BUedge \Paa \Pbb;
\BUedge \Paa \Pbc;
\BUedge \Paa \Pkk;
\BUedge \Pea \Pbb;
\BUedge \Pea \Pkk;
\BUedge \Pbc \Pbd;
\BUedge \Pbc \Pcc;
\BUedge \Pkk \Pjj;
\BUedge \Pbb \Pcc;
\BUedge \Pcc \Pjj;
\BUedge \Pjj \Pbd;
\BUedge \Pjj \Pcd;
\BUedge \Pcd \Pbd;
\Aedge \Paa \Pea;
\BMvertW \Pbe \
\BMvertW \Pbe \
\BAvertW \Paa \
\BAvertN \Pea \
\BBvertW \Paf \
\BBvertN \Pbf \
\BBvertN \Pce \
\BBvertW \Pae \
\BBvertS \Pef \
\BBvertE \Pde \
\BBvertE \Pdd \
\BMvertW \Pbe \
\BAvertW \Paa \
\BAvertN \Pea \
\BIvertE \Pbc \
\BIvertE \Pkk \
\BIvertW \Pbb \
\BIvertN \Pcc \
\BIvertE \Pjj \
\BIvertS \Pbd \
\BIvertS \Pcd \
\end{tikzpicture}
~\\
$\cG=\cG_1\cup\cG_2$
\qquad and\qquad
$c(\cG)=200192$
\vspace{-2mm}
\caption{
A non-trivial calligraphic split $(\cG_1,\cG_2)$ for the minimally rigid graph~$\cG$.}
\label{fig:level1}
\end{figure}

\begin{figure}[!ht]
\centering
\setlength{\tabcolsep}{1mm}
\begin{tabular}{ccc}
\\
&$[\cG_1]=(368, 96, 176)$&
\\
\begin{tikzpicture}[scale=0.3,rotate=-90]
\BVedge \Paf \Pbf;
\BVedge \Paf \Pbe;
\BVedge \Paf \Pae;
\BVedge \Pbf \Pae;
\BVedge \Pbf \Pef;
\BVedge \Pce \Pbe;
\BVedge \Pce \Pef;
\BVedge \Pce \Pde;
\BVedge \Pae \Paa;
\BVedge \Pae \Pbe;
\BVedge \Pbe \Pdd;
\BVedge \Pef \Pea;
\BVedge \Pef \Pde;
\BVedge \Pea \Pdd;
\BVedge \Pea \Pdd;
\BVedge \Pdd \Pde;
\BDedge \Paa \Pbe;
\Aedge \Paa \Pea;
\BMvertW \Pbe \
\BAvertW \Paa \
\BAvertN \Pea \
\BBvertW \Paf \
\BBvertN \Pbf \
\BBvertN \Pce \
\BBvertW \Pae \
\BBvertS \Pef \
\BBvertE \Pde \
\BBvertE \Pdd \
\end{tikzpicture}
&
\begin{tikzpicture}[scale=0.3,rotate=-90]
\BVedge \Paf \Pbf;
\BVedge \Paf \Pbe;
\BVedge \Paf \Pae;
\BVedge \Pbf \Pae;
\BVedge \Pbf \Pef;
\BVedge \Pce \Pbe;
\BVedge \Pce \Pef;
\BVedge \Pce \Pde;
\BVedge \Pae \Paa;
\BVedge \Pae \Pbe;
\BVedge \Pbe \Pdd;
\BVedge \Pef \Pea;
\BVedge \Pef \Pde;
\BVedge \Pea \Pdd;
\BVedge \Pea \Pdd;
\BVedge \Pdd \Pde;
\BDedge \Pea \Pbe;
\Aedge \Paa \Pea;
\BMvertW \Pbe \
\BAvertW \Paa \
\BAvertN \Pea \
\BBvertW \Paf \
\BBvertN \Pbf \
\BBvertN \Pce \
\BBvertW \Pae \
\BBvertS \Pef \
\BBvertE \Pde \
\BBvertE \Pdd \
\end{tikzpicture}
&
\begin{tikzpicture}[scale=0.3,rotate=-90]
\BVedge \Paf \Pbf;
\BVedge \Paf \Pbe;
\BVedge \Paf \Pae;
\BVedge \Pbf \Pae;
\BVedge \Pbf \Pef;
\BVedge \Pce \Pbe;
\BVedge \Pce \Pef;
\BVedge \Pce \Pde;
\BVedge \Pae \Paa;
\BVedge \Pae \Pbe;
\BVedge \Pbe \Pdd;
\BVedge \Pef \Pea;
\BVedge \Pef \Pde;
\BVedge \Pea \Pdd;
\BVedge \Pea \Pdd;
\BVedge \Pdd \Pde;
\BDedge \Pii \Pbe;
\BDedge \Pii \Paa;
\BDedge \Pii \Pea;
\Aedge \Paa \Pea;
\BIvertS \Pii v;
\BMvertW \Pbe \
\BAvertW \Paa \
\BAvertN \Pea \
\BBvertW \Paf \
\BBvertN \Pbf \
\BBvertN \Pce \
\BBvertW \Pae \
\BBvertS \Pef \
\BBvertE \Pde \
\BBvertE \Pdd \
\end{tikzpicture}
\\
$\cG_{1L}:=\cG_1\cup \cL$ &
$\cG_{1R}:=\cG_1\cup \cR$ &
$\cG_{1C}:=\cG_1\cup \cC_v$
\\
$c(\cG_{1L})=544$ &
$c(\cG_{1R})=384$ &
$c(\cG_{1L})=1472$
\end{tabular}
\caption{The class of the calligraph $\cG_1$ in
\FIG{level1}.}
\label{fig:level2a}
\end{figure}

\begin{figure}[!ht]
\centering
\setlength{\tabcolsep}{1mm}
\begin{tabular}{ccc}
\begin{tikzpicture}[scale=0.3,rotate=-90]
\BUedge \Paf \Pbf;
\BUedge \Paf \Pbe;
\BUedge \Paf \Pae;
\BUedge \Pbf \Pae;
\BUedge \Pbf \Pef;
\BVedge \Pce \Pbe;
\BVedge \Pce \Pef;
\BVedge \Pce \Pde;
\BUedge \Pae \Paa;
\BUedge \Pae \Pbe;
\BUedge \Pef \Pea;
\BVedge \Pef \Pde;
\BUedge \Paa \Pea;
\BUedge \Pea \Pdd;
\BUedge \Pea \Pdd;
\BVedge \Pdd \Pde;
\BUedge \Paa \Pbe;
\Aedge \Pbe \Pdd;
\BAvertW \Pbe \
\BIvertW \Paa \
\BIvertN \Pea \
\BIvertW \Paf \
\BIvertN \Pbf \
\BBvertN \Pce \
\BIvertW \Pae \
\BMvertS \Pef \
\BBvertE \Pde \
\BAvertE \Pdd \
\end{tikzpicture}
&
\begin{tikzpicture}[scale=0.3,rotate=-90]
\BUedge \Paf \Pbf;
\BUedge \Paf \Pbe;
\BUedge \Paf \Pae;
\BUedge \Pbf \Pae;
\BUedge \Pbf \Pef;
\BVedge \Pce \Pbe;
\BVedge \Pce \Pef;
\BVedge \Pce \Pde;
\BUedge \Pae \Paa;
\BUedge \Pae \Pbe;
\BUedge \Pef \Pea;
\BVedge \Pef \Pde;
\BUedge \Paa \Pea;
\BUedge \Pea \Pdd;
\BUedge \Pea \Pdd;
\BVedge \Pdd \Pde;
\BUedge \Pea \Pbe;
\Aedge \Pbe \Pdd;
\BAvertW \Pbe \
\BIvertW \Paa \
\BIvertN \Pea \
\BIvertW \Paf \
\BIvertN \Pbf \
\BBvertN \Pce \
\BIvertW \Pae \
\BMvertS \Pef \
\BBvertE \Pde \
\BAvertE \Pdd \
\end{tikzpicture}
&
\begin{tikzpicture}[scale=0.3,rotate=-90]
\BUedge \Paf \Pbf;
\BUedge \Paf \Pbe;
\BUedge \Paf \Pae;
\BUedge \Pbf \Pae;
\BUedge \Pbf \Pef;
\BVedge \Pce \Pbe;
\BVedge \Pce \Pef;
\BVedge \Pce \Pde;
\BUedge \Pae \Paa;
\BUedge \Pae \Pbe;
\Aedge \Pbe \Pdd;
\BUedge \Pef \Pea;
\BVedge \Pef \Pde;
\BUedge \Paa \Pea;
\BUedge \Pea \Pdd;
\BUedge \Pea \Pdd;
\BVedge \Pdd \Pde;
\BUedge \Pii \Pbe;
\BUedge \Pii \Paa;
\BUedge \Pii \Pea;
\Aedge \Pbe \Pdd;
\BAvertW \Pbe \
\BIvertW \Paa \
\BIvertN \Pea \
\BIvertW \Paf \
\BIvertN \Pbf \
\BBvertN \Pce \
\BIvertW \Pae \
\BMvertS \Pef \
\BBvertE \Pde \
\BAvertE \Pdd \
\BIvertS \Pii \
\end{tikzpicture}
\\
$\cG_{1L}=\cG_{1L1}\cup\cG_{1L2}$&
$\cG_{1R}=\cR_{1R1}\cup\cG_{1R2}$&
$\cG_{1C}=\cG_{1C1}\cup\cG_{1C2}$
\end{tabular}
\\
\caption{
Non-trivial calligraphic splits for the minimally rigid graphs in \FIG{level2a}.
}
\label{fig:level3a}
\end{figure}

\begin{figure}[!ht]
\centering
\setlength{\tabcolsep}{1mm}
\begin{tabular}{ccc}
&$[\cG_2]=(272,0,0)$&
\\
\begin{tikzpicture}[scale=0.31,rotate=-90]
\BUedge \Pbe \Pbd;
\BUedge \Pbe \Pcd;
\BUedge \Paa \Pbb;
\BUedge \Paa \Pbc;
\BUedge \Paa \Pkk;
\BUedge \Pea \Pbb;
\BUedge \Pea \Pkk;
\BUedge \Pbc \Pbd;
\BUedge \Pbc \Pcc;
\BUedge \Pkk \Pjj;
\BUedge \Pbb \Pcc;
\BUedge \Pcc \Pjj;
\BUedge \Pjj \Pbd;
\BUedge \Pjj \Pcd;
\BUedge \Pcd \Pbd;
\Aedge \Paa \Pea;
\BEedge \Paa \Pbe;
\BMvertW \Pbe \
\BAvertW \Paa \
\BAvertN \Pea \
\BIvertE \Pbc \
\BIvertE \Pkk \
\BIvertW \Pbb \
\BIvertN \Pcc \
\BIvertE \Pjj \
\BIvertS \Pbd \
\BIvertS \Pcd \
\end{tikzpicture}
&
\begin{tikzpicture}[scale=0.31,rotate=-90]
\BUedge \Pbe \Pbd;
\BUedge \Pbe \Pcd;
\BUedge \Paa \Pbb;
\BUedge \Paa \Pbc;
\BUedge \Paa \Pkk;
\BUedge \Pea \Pbb;
\BUedge \Pea \Pkk;
\BUedge \Pbc \Pbd;
\BUedge \Pbc \Pcc;
\BUedge \Pkk \Pjj;
\BUedge \Pbb \Pcc;
\BUedge \Pcc \Pjj;
\BUedge \Pjj \Pbd;
\BUedge \Pjj \Pcd;
\BUedge \Pcd \Pbd;
\BEedge \Pea \Pbe;
\Aedge \Paa \Pea;
\BMvertW \Pbe \
\BAvertW \Paa \
\BAvertN \Pea \
\BIvertE \Pbc \
\BIvertE \Pkk \
\BIvertW \Pbb \
\BIvertN \Pcc \
\BIvertE \Pjj \
\BIvertS \Pbd \
\BIvertS \Pcd \
\end{tikzpicture}
&
\begin{tikzpicture}[scale=0.31,rotate=-90]
\BUedge \Pbe \Pbd;
\BUedge \Pbe \Pcd;
\BUedge \Paa \Pbb;
\BUedge \Paa \Pbc;
\BUedge \Paa \Pkk;
\BUedge \Pea \Pbb;
\BUedge \Pea \Pkk;
\BUedge \Pbc \Pbd;
\BUedge \Pbc \Pcc;
\BUedge \Pkk \Pjj;
\BUedge \Pbb \Pcc;
\BUedge \Pcc \Pjj;
\BUedge \Pjj \Pbd;
\BUedge \Pjj \Pcd;
\BUedge \Pcd \Pbd;
\Aedge \Paa \Pea;
\BEedge \Pac \Pbe;
\BEedge \Pac \Paa;
\BEedge \Pac \Pea;
\BMvertW \Pbe \
\BAvertW \Paa \
\BAvertN \Pea \
\BIvertE \Pbc \
\BIvertE \Pkk \
\BIvertW \Pbb \
\BIvertN \Pcc \
\BIvertE \Pjj \
\BIvertS \Pbd \
\BIvertS \Pcd \
\BBvertS \Pac \
\end{tikzpicture}
\\
$\cG_{2L}:=\cG_2\cup \cL$ &
$\cG_{2R}:=\cG_2\cup \cR$ &
$\cG_{2C}:=\cG_2\cup \cC_v$
\\
$c(\cG_{2L})=544$ & $c(\cG_{2R})=544$ & $c(\cG_{2C})=1088$
\end{tabular}
\caption{The class of the calligraph graph $\cG_2$ in \FIG{level1}.}
\label{fig:level2b}
\end{figure}

\begin{figure}[!ht]
\centering
\setlength{\tabcolsep}{1mm}
\begin{tabular}{ccc}
\begin{tikzpicture}[scale=0.31,rotate=-90]
\BUedge \Pbe \Pbd;
\BUedge \Pbe \Pcd;
\BVedge \Paa \Pea;
\BVedge \Paa \Pbb;
\BUedge \Paa \Pbc;
\BVedge \Paa \Pkk;
\BVedge \Pea \Pbb;
\BVedge \Pea \Pkk;
\BUedge \Pbc \Pbd;
\BUedge \Pbc \Pcc;
\BVedge \Pkk \Pjj;
\BVedge \Pbb \Pcc;
\BUedge \Pjj \Pbd;
\BUedge \Pjj \Pcd;
\BUedge \Pcd \Pbd;
\BUedge \Paa \Pbe;
\Aedge \Pcc \Pjj;
\BIvertW \Pbe \
\BMvertW \Paa \
\BBvertN \Pea \
\BIvertE \Pbc \
\BBvertE \Pkk \
\BBvertW \Pbb \
\BAvertN \Pcc \
\BAvertE \Pjj \
\BIvertS \Pbd \
\BIvertS \Pcd \
\end{tikzpicture}
&
\begin{tikzpicture}[scale=0.31,rotate=-90]
\BVedge \Pbe \Pbd;
\BVedge \Pbe \Pcd;
\BUedge \Paa \Pea;
\BUedge \Paa \Pbb;
\BUedge \Paa \Pbc;
\BUedge \Paa \Pkk;
\BUedge \Pea \Pbb;
\BUedge \Pea \Pkk;
\BUedge \Pbc \Pbd;
\BUedge \Pbc \Pcc;
\BUedge \Pkk \Pjj;
\BUedge \Pbb \Pcc;
\BUedge \Pcc \Pjj;
\BVedge \Pjj \Pcd;
\BVedge \Pcd \Pbd;
\BVedge \Pea \Pbe;
\Aedge \Pjj \Pbd;
\BBvertW \Pbe \
\BIvertW \Paa \
\BMvertN \Pea \
\BIvertE \Pbc \
\BIvertE \Pkk \
\BIvertW \Pbb \
\BIvertN \Pcc \
\BAvertE \Pjj \
\BAvertS \Pbd \
\BBvertS \Pcd \
\end{tikzpicture}
&
\begin{tikzpicture}[scale=0.31,rotate=-90]
\BVedge \Pbe \Pbd;
\BVedge \Pbe \Pcd;
\BUedge \Paa \Pea;
\BUedge \Paa \Pbb;
\BUedge \Paa \Pbc;
\BUedge \Paa \Pkk;
\BUedge \Pea \Pbb;
\BUedge \Pea \Pkk;
\BUedge \Pbc \Pbd;
\BUedge \Pbc \Pcc;
\BUedge \Pkk \Pjj;
\BUedge \Pbb \Pcc;
\BUedge \Pcc \Pjj;
\BVedge \Pjj \Pcd;
\BVedge \Pcd \Pbd;
\BVedge \Pac \Pbe;
\BUedge \Pac \Paa;
\BUedge \Pac \Pea;
\Aedge \Pjj \Pbd;
\BMvertS \Pac \
\BBvertW \Pbe \
\BIvertW \Paa \
\BIvertN \Pea \
\BIvertE \Pbc \
\BIvertE \Pkk \
\BIvertW \Pbb \
\BIvertN \Pcc \
\BAvertE \Pjj \
\BAvertS \Pbd \
\BBvertS \Pcd \
\end{tikzpicture}
\\
$\cG_{2L}=\cG_{2L1}\cup\cG_{2L2}$ &
$\cG_{2R}=\cG_{2R1}\cup\cG_{2R2}$ &
$\cG_{2C}=\cG_{2C1}\cup\cG_{2C2}$
\end{tabular}
\caption{
Non-trivial calligraphic splits for the minimally rigid graphs in \FIG{level2b}.
}
\label{fig:level3b}
\end{figure}

\begin{figure}[!ht]
\centering
\setlength{\tabcolsep}{1mm}
\begin{tabular}{ccc}
&$[\cG_{1L1}]=(56,8,24)$&
\\
\begin{tikzpicture}[scale=0.3,rotate=-90]
\BUedge \Paf \Pbf;
\BUedge \Paf \Pbe;
\BUedge \Paf \Pae;
\BUedge \Pbf \Pae;
\BUedge \Pbf \Pef;
\BUedge \Pae \Paa;
\BUedge \Pae \Pbe;
\BUedge \Pef \Pea;
\BUedge \Paa \Pea;
\BUedge \Pea \Pdd;
\BUedge \Pea \Pdd;
\BEedge \Pbe \Pef;
\BUedge \Paa \Pbe;
\Aedge \Pbe \Pdd;
\BAvertW \Pbe \
\BIvertW \Paa \
\BIvertN \Pea \
\BIvertW \Paf \
\BIvertN \Pbf \
\BIvertW \Pae \
\BMvertS \Pef \
\BAvertE \Pdd \
\end{tikzpicture}
&
\begin{tikzpicture}[scale=0.3,rotate=-90]
\BUedge \Paf \Pbf;
\BUedge \Paf \Pbe;
\BUedge \Paf \Pae;
\BUedge \Pbf \Pae;
\BUedge \Pbf \Pef;
\BUedge \Pae \Paa;
\BUedge \Pae \Pbe;
\BUedge \Pef \Pea;
\BUedge \Paa \Pea;
\BUedge \Pea \Pdd;
\BUedge \Pea \Pdd;
\BUedge \Paa \Pbe;
\BEedge \Pdd \Pef;
\Aedge \Pbe \Pdd;
\BAvertW \Pbe \
\BIvertW \Paa \
\BIvertN \Pea \
\BIvertW \Paf \
\BIvertN \Pbf \
\BIvertW \Pae \
\BMvertS \Pef \
\BAvertE \Pdd \
\end{tikzpicture}
&
\begin{tikzpicture}[scale=0.3,rotate=-90]
\BUedge \Paf \Pbf;
\BUedge \Paf \Pbe;
\BUedge \Paf \Pae;
\BUedge \Pbf \Pae;
\BUedge \Pbf \Pef;
\BUedge \Pae \Paa;
\BUedge \Pae \Pbe;
\BUedge \Pef \Pea;
\BUedge \Paa \Pea;
\BUedge \Pea \Pdd;
\BUedge \Pea \Pdd;
\BUedge \Paa \Pbe;
\BEedge \Pmm \Pef;
\BEedge \Pmm \Pbe;
\BEedge \Pmm \Pdd;
\Aedge \Pbe \Pdd;
\BAvertW \Pbe \
\BIvertW \Paa \
\BIvertN \Pea \
\BIvertW \Paf \
\BIvertN \Pbf \
\BIvertW \Pae \
\BMvertS \Pef \
\BAvertE \Pdd \
\BBvertW \Pmm \
\end{tikzpicture}
\\
$\cG_{1L1L}:=\cG_{1L1}\cup\cL$ & $\cG_{1L1R}:=\cG_{1L1}\cup\cR$ & $\cG_{1L1C}:=\cG_{1L1}\cup\cC_v$
\\
$\cc(\cG_{1L1L})=64$ & $\cc(\cG_{1L1R})=96$ & $\cc(\cG_{1L1C})=224$
\\\hline
&$[\cG_{1R1}]=(32,0,0)$&
\\
\begin{tikzpicture}[scale=0.3,rotate=-90]
\BUedge \Paf \Pbf;
\BUedge \Paf \Pbe;
\BUedge \Paf \Pae;
\BUedge \Pbf \Pae;
\BUedge \Pbf \Pef;
\BUedge \Pae \Paa;
\BUedge \Pae \Pbe;
\BUedge \Pef \Pea;
\BUedge \Paa \Pea;
\BUedge \Pea \Pdd;
\BUedge \Pea \Pdd;
\BEedge \Pbe \Pef;
\BUedge \Pea \Pbe;
\Aedge \Pbe \Pdd;
\BAvertW \Pbe \
\BIvertW \Paa \
\BIvertN \Pea \
\BIvertW \Paf \
\BIvertN \Pbf \
\BIvertW \Pae \
\BMvertS \Pef \
\BAvertE \Pdd \
\end{tikzpicture}
&
\begin{tikzpicture}[scale=0.3,rotate=-90]
\BUedge \Paf \Pbf;
\BUedge \Paf \Pbe;
\BUedge \Paf \Pae;
\BUedge \Pbf \Pae;
\BUedge \Pbf \Pef;
\BUedge \Pae \Paa;
\BUedge \Pae \Pbe;
\BUedge \Pef \Pea;
\BUedge \Paa \Pea;
\BUedge \Pea \Pdd;
\BUedge \Pea \Pdd;
\BUedge \Pea \Pbe;
\BEedge \Pdd \Pef;
\Aedge \Pbe \Pdd;
\BAvertW \Pbe \
\BIvertW \Paa \
\BIvertN \Pea \
\BIvertW \Paf \
\BIvertN \Pbf \
\BIvertW \Pae \
\BMvertS \Pef \
\BAvertE \Pdd \
\end{tikzpicture}
&
\begin{tikzpicture}[scale=0.3,rotate=-90]
\BUedge \Paf \Pbf;
\BUedge \Paf \Pbe;
\BUedge \Paf \Pae;
\BUedge \Pbf \Pae;
\BUedge \Pbf \Pef;
\BUedge \Pae \Paa;
\BUedge \Pae \Pbe;
\BUedge \Pef \Pea;
\BUedge \Paa \Pea;
\BUedge \Pea \Pdd;
\BUedge \Pea \Pdd;
\BUedge \Pea \Pbe;
\BEedge \Pmm \Pef;
\BEedge \Pmm \Pbe;
\BEedge \Pmm \Pdd;
\Aedge \Pbe \Pdd;
\BAvertW \Pbe \
\BIvertW \Paa \
\BIvertN \Pea \
\BIvertW \Paf \
\BIvertN \Pbf \
\BIvertW \Pae \
\BMvertS \Pef \
\BAvertE \Pdd \
\BBvertW \Pmm \
\end{tikzpicture}
\\
$\cG_{1R1L}:=\cG_{1R1}\cup\cL$ & $\cG_{1R1R}:=\cG_{1R1}\cup\cR$ & $\cG_{1R1C}:=\cG_{1R1}\cup\cC_v$
\\
$\cc(\cG_{1R1L})=64$ & $\cc(\cG_{1R1R})=64$ & $\cc(\cG_{1R1C})=128$
\\\hline
&$[\cG_{1C1}]=(144, 16, 48)$&
\\
\begin{tikzpicture}[scale=0.3,rotate=-90]
\BUedge \Paf \Pbf;
\BUedge \Paf \Pbe;
\BUedge \Paf \Pae;
\BUedge \Pbf \Pae;
\BUedge \Pbf \Pef;
\BUedge \Pae \Paa;
\BUedge \Pae \Pbe;
\BUedge \Pef \Pea;
\BUedge \Paa \Pea;
\BUedge \Pea \Pdd;
\BUedge \Pea \Pdd;
\BEedge \Pbe \Pef;
\BUedge \Pii \Pbe;
\BUedge \Pii \Paa;
\BUedge \Pii \Pea;
\Aedge \Pbe \Pdd;
\BAvertW \Pbe \
\BIvertW \Paa \
\BIvertN \Pea \
\BIvertW \Paf \
\BIvertN \Pbf \
\BIvertW \Pae \
\BMvertS \Pef \
\BAvertE \Pdd \
\BIvertW \Pii \
\end{tikzpicture}
&
\begin{tikzpicture}[scale=0.3,rotate=-90]
\BUedge \Paf \Pbf;
\BUedge \Paf \Pbe;
\BUedge \Paf \Pae;
\BUedge \Pbf \Pae;
\BUedge \Pbf \Pef;
\BUedge \Pae \Paa;
\BUedge \Pae \Pbe;
\BUedge \Pef \Pea;
\BUedge \Paa \Pea;
\BUedge \Pea \Pdd;
\BUedge \Pea \Pdd;
\BEedge \Pdd \Pef;
\BUedge \Pii \Pbe;
\BUedge \Pii \Paa;
\BUedge \Pii \Pea;
\Aedge \Pbe \Pdd;
\BAvertW \Pbe \
\BIvertW \Paa \
\BIvertN \Pea \
\BIvertW \Paf \
\BIvertN \Pbf \
\BIvertW \Pae \
\BMvertS \Pef \
\BAvertE \Pdd \
\BIvertW \Pii \
\end{tikzpicture}
&
\begin{tikzpicture}[scale=0.3,rotate=-90]
\BUedge \Paf \Pbf;
\BUedge \Paf \Pbe;
\BUedge \Paf \Pae;
\BUedge \Pbf \Pae;
\BUedge \Pbf \Pef;
\BUedge \Pae \Paa;
\BUedge \Pae \Pbe;
\BUedge \Pef \Pea;
\BUedge \Paa \Pea;
\BUedge \Pea \Pdd;
\BUedge \Pea \Pdd;
\BEedge \Pmm \Pef;
\BEedge \Pmm \Pbe;
\BEedge \Pmm \Pdd;
\BUedge \Pii \Pbe;
\BUedge \Pii \Paa;
\BUedge \Pii \Pea;
\Aedge \Pbe \Pdd;
\BAvertW \Pbe \
\BIvertW \Paa \
\BIvertN \Pea \
\BIvertW \Paf \
\BIvertN \Pbf \
\BIvertW \Pae \
\BMvertS \Pef \
\BAvertE \Pdd \
\BIvertW \Pii \
\BBvertW \Pmm \
\end{tikzpicture}
\\
$\cG_{1C1L}:=\cG_{1C1}\cup\cL$ & $\cG_{1C1R}:=\cG_{1C1}\cup\cR$ & $\cG_{1C1C}:=\cG_{1C1}\cup\cC_v$
\\
$\cc(\cG_{1C1L})=192$ & $\cc(\cG_{1C1R})=256$ & $\cc(\cG_{1C1C})=576$
\end{tabular}
\vspace{-1mm} % otherwise float error
\caption{
The classes for three of the six calligraphs in \FIG{level3a}.
For the remaining calligraphs we have
$[\cG_{1L2}]=[\cG_{1R2}]=[\cG_{1C2}]=[\cH]=(6,2,2)$ (see \SEC{H}).
}
\label{fig:level4a}
\end{figure}

\begin{figure}[!ht]
\centering
\setlength{\tabcolsep}{1mm}
\begin{tabular}{ccc}
&$[\cG_{2L1}]=(28,4,12)$&
\\
\begin{tikzpicture}[scale=0.31,rotate=-90]
\BUedge \Pbe \Pbd ;
\BUedge \Pbe \Pcd ;
\BUedge \Pab \Pbc ;
\BUedge \Pbc \Pbd ;
\BUedge \Pbc \Pcc ;
\BUedge \Pjj \Pbd ;
\BUedge \Pjj \Pcd ;
\BUedge \Pcd \Pbd ;
\BUedge \Pab \Pbe ;
\BEedge \Pcc \Pab ;
\Aedge \Pcc \Pjj ;
\BIvertW \Pbe \
\BMvertW \Pab \
\BIvertE \Pbc \
\BAvertN \Pcc \
\BAvertE \Pjj \
\BIvertS \Pbd \
\BIvertS \Pcd \
\end{tikzpicture}
&
\begin{tikzpicture}[scale=0.31,rotate=-90]
\BUedge \Pbe \Pbd ;
\BUedge \Pbe \Pcd ;
\BUedge \Pab \Pbc ;
\BUedge \Pbc \Pbd ;
\BUedge \Pbc \Pcc ;
\BUedge \Pjj \Pbd ;
\BUedge \Pjj \Pcd ;
\BUedge \Pcd \Pbd ;
\BUedge \Pab \Pbe ;
\BEedge \Pjj \Pab ;
\Aedge \Pcc \Pjj ;
\BIvertW \Pbe \
\BMvertW \Pab \
\BIvertE \Pbc \
\BAvertN \Pcc \
\BAvertE \Pjj \
\BIvertS \Pbd \
\BIvertS \Pcd \
\end{tikzpicture}
&
\begin{tikzpicture}[scale=0.31,rotate=-90]
\BUedge \Pbe \Pbd ;
\BUedge \Pbe \Pcd ;
\BUedge \Pab \Pbc ;
\BUedge \Pbc \Pbd ;
\BUedge \Pbc \Pcc ;
\BUedge \Pjj \Pbd ;
\BUedge \Pjj \Pcd ;
\BUedge \Pcd \Pbd ;
\BUedge \Pab \Pbe ;
\BEedge \Pdb \Pab ;
\BEedge \Pdb \Pcc ;
\BEedge \Pdb \Pjj ;
\Aedge \Pcc \Pjj ;
\BBvertW \Pdb \
\BIvertW \Pbe \
\BMvertW \Pab \
\BIvertE \Pbc \
\BAvertN \Pcc \
\BAvertE \Pjj \
\BIvertS \Pbd \
\BIvertS \Pcd \
\end{tikzpicture}
\\
$\cG_{2L1L}:=\cG_{2L1}\cup\cL$ &
$\cG_{2L1R}:=\cG_{2L1}\cup\cR$ &
$\cG_{2L1C}:=\cG_{2L1}\cup\cC_v$
\\
$\cc(\cG_{2L1L})=48$ &
$\cc(\cG_{2L1R})=32$ &
$\cc(\cG_{2L1C})=112$
\\\hline
&$[\cG_{2L2}]=2\cdot[\cH]=(12,4,4)$&
\\
\begin{tikzpicture}[scale=0.31,rotate=-90]
\BVedge \Pba \Pea ;
\BVedge \Pba \Pbb ;
\BVedge \Pba \Pkk ;
\BVedge \Pea \Pbb ;
\BVedge \Pea \Pkk ;
\BVedge \Pkk \Pjj ;
\BVedge \Pbb \Pcd ;
\BDedge \Pcd \Pba ;
\Aedge \Pcd \Pjj ;
\BMvertW \Pba \
\BBvertN \Pea \
\BBvertE \Pkk \
\BBvertW \Pbb \
\BAvertN \Pcd \
\BAvertE \Pjj \
\end{tikzpicture}
&
\begin{tikzpicture}[scale=0.31,rotate=-90]
\BVedge \Pba \Pea ;
\BVedge \Pba \Pbb ;
\BVedge \Pba \Pkk ;
\BVedge \Pea \Pbb ;
\BVedge \Pea \Pkk ;
\BVedge \Pkk \Pjj ;
\BVedge \Pbb \Pcd ;
\BDedge \Pjj \Pba ;
\Aedge \Pcd \Pjj ;
\BMvertW \Pba \
\BBvertN \Pea \
\BBvertE \Pkk \
\BBvertW \Pbb \
\BAvertN \Pcd \
\BAvertE \Pjj \
\end{tikzpicture}
&
\begin{tikzpicture}[scale=0.31,rotate=-90]
\BVedge \Pba \Pea ;
\BVedge \Pba \Pbb ;
\BVedge \Pba \Pkk ;
\BVedge \Pea \Pbb ;
\BVedge \Pea \Pkk ;
\BVedge \Pkk \Pjj ;
\BVedge \Pbb \Pcd ;
\BDedge \Pcc \Pba ;
\BDedge \Pcc \Pcd ;
\BDedge \Pcc \Pjj ;
\Aedge \Pcd \Pjj ;
\BIvertW \Pcc \
\BMvertW \Pba \
\BBvertN \Pea \
\BBvertE \Pkk \
\BBvertW \Pbb \
\BAvertN \Pcd \
\BAvertE \Pjj \
\end{tikzpicture}
\\
$\cG_{2L2L}:=\cG_{2L2}\cup\cL$ &
$\cG_{2L2R}:=\cG_{2L2}\cup\cR$ &
$\cG_{2L2C}:=\cG_{2L2}\cup\cC_v$
\\
$\cc(\cG_{2L2L})=16$ &
$\cc(\cG_{2L2R})=16$ &
$\cc(\cG_{2L2C})=48$
\\\hline
&$[\cG_{2R1}]=(68, 12, 36)$&
\\
\begin{tikzpicture}[scale=0.31,rotate=-90]
\BUedge \Paa \Pca ;
\BUedge \Paa \Pbb ;
\BUedge \Paa \Pbc ;
\BUedge \Paa \Pdb ;
\BUedge \Pca \Pbb ;
\BUedge \Pca \Pdb ;
\BUedge \Pbc \Pbd ;
\BUedge \Pbc \Pcc ;
\BUedge \Pdb \Pjj ;
\BUedge \Pbb \Pcc ;
\BUedge \Pcc \Pjj ;
\BEedge \Pbd \Pca ;
\Aedge \Pjj \Pbd ;
\BIvertW \Pdb \
\BIvertW \Paa \
\BMvertN \Pca \
\BIvertE \Pbc \
\BIvertW \Pbb \
\BIvertN \Pcc \
\BAvertE \Pjj \
\BAvertS \Pbd \
\end{tikzpicture}
&
\begin{tikzpicture}[scale=0.31,rotate=-90]
\BUedge \Paa \Pca ;
\BUedge \Paa \Pbb ;
\BUedge \Paa \Pbc ;
\BUedge \Paa \Pdb ;
\BUedge \Pca \Pbb ;
\BUedge \Pca \Pdb ;
\BUedge \Pbc \Pbd ;
\BUedge \Pbc \Pcc ;
\BUedge \Pdb \Pjj ;
\BUedge \Pbb \Pcc ;
\BUedge \Pcc \Pjj ;
\BEedge \Pjj \Pca ;
\Aedge \Pjj \Pbd ;
\BIvertW \Pdb \
\BIvertW \Paa \
\BMvertN \Pca \
\BIvertE \Pbc \
\BIvertW \Pbb \
\BIvertN \Pcc \
\BAvertE \Pjj \
\BAvertS \Pbd \
\end{tikzpicture}
&
\begin{tikzpicture}[scale=0.31,rotate=-90]
\BUedge \Paa \Pca ;
\BUedge \Paa \Pbb ;
\BUedge \Paa \Pbc ;
\BUedge \Paa \Pdb ;
\BUedge \Pca \Pbb ;
\BUedge \Pca \Pdb ;
\BUedge \Pbc \Pbd ;
\BUedge \Pbc \Pcc ;
\BUedge \Pbd \Pjj ;
\BUedge \Pbb \Pcc ;
\BUedge \Pcc \Pjj ;
\BUedge \Pdb \Pjj ;
\BEedge \Pcb \Pca ;
\BEedge \Pcb \Pbd ;
\BEedge \Pcb \Pjj ;
\Aedge \Pjj \Pbd ;
\BIvertW \Pdb \
\BBvertS \Pcb \
\BIvertW \Paa \
\BMvertN \Pca \
\BIvertE \Pbc \
\BIvertW \Pbb \
\BIvertN \Pcc \
\BAvertE \Pjj \
\BAvertS \Pbd \
\end{tikzpicture}
\\
$\cG_{2R1L}:=\cG_{2R1}\cup\cL$ &
$\cG_{2R1R}:=\cG_{2R1}\cup\cR$ &
$\cG_{2R1C}:=\cG_{2R1}\cup\cC_v$
\\
$\cc(\cG_{2R1L})=112$ &
$\cc(\cG_{2R1R})=64$ &
$\cc(\cG_{2R1C})=272$
\\\hline
&$[\cG_{2C1}]=(136, 24, 72)$&
\\
\begin{tikzpicture}[scale=0.31,rotate=-90]
\BUedge \Paa \Pea ;
\BUedge \Paa \Pbb ;
\BUedge \Paa \Pbc ;
\BUedge \Paa \Pkk ;
\BUedge \Pea \Pbb ;
\BUedge \Pea \Pkk ;
\BUedge \Pbc \Pbd ;
\BUedge \Pbc \Pcc ;
\BUedge \Pkk \Pjj ;
\BUedge \Pbb \Pcc ;
\BUedge \Pcc \Pjj ;
\BUedge \Pac \Paa ;
\BUedge \Pac \Pea ;
\BEedge \Pac \Pbd ;
\Aedge \Pjj \Pbd ;
\BMvertS \Pac \
\BIvertW \Paa \
\BIvertN \Pea \
\BIvertE \Pbc \
\BIvertE \Pkk \
\BIvertW \Pbb \
\BIvertN \Pcc \
\BAvertE \Pjj \
\BAvertS \Pbd \
\end{tikzpicture}
&
\begin{tikzpicture}[scale=0.31,rotate=-90]
\BUedge \Paa \Pea ;
\BUedge \Paa \Pbb ;
\BUedge \Paa \Pbc ;
\BUedge \Paa \Pkk ;
\BUedge \Pea \Pbb ;
\BUedge \Pea \Pkk ;
\BUedge \Pbc \Pbd ;
\BUedge \Pbc \Pcc ;
\BUedge \Pkk \Pjj ;
\BUedge \Pbb \Pcc ;
\BUedge \Pcc \Pjj ;
\BUedge \Pac \Paa ;
\BUedge \Pac \Pea ;
\BEedge \Pac \Pjj ;
\Aedge \Pjj \Pbd ;
\BMvertS \Pac \
\BIvertW \Paa \
\BIvertN \Pea \
\BIvertE \Pbc \
\BIvertE \Pkk \
\BIvertW \Pbb \
\BIvertN \Pcc \
\BAvertE \Pjj \
\BAvertS \Pbd \
\end{tikzpicture}
&
\begin{tikzpicture}[scale=0.31,rotate=-90]
\BUedge \Paa \Pea ;
\BUedge \Paa \Pbb ;
\BUedge \Paa \Pbc ;
\BUedge \Paa \Pkk ;
\BUedge \Pea \Pbb ;
\BUedge \Pea \Pkk ;
\BUedge \Pbc \Pbd ;
\BUedge \Pbc \Pcc ;
\BUedge \Pkk \Pjj ;
\BUedge \Pbb \Pcc ;
\BUedge \Pcc \Pjj ;
\BUedge \Pac \Paa ;
\BUedge \Pac \Pea ;
\BEedge \Pad \Pac ;
\BEedge \Pad \Pbd ;
\BEedge \Pad \Pjj ;
\Aedge \Pjj \Pbd ;
\BBvertS \Pad \
\BMvertS \Pac \
\BIvertW \Paa \
\BIvertN \Pea \
\BIvertE \Pbc \
\BIvertE \Pkk \
\BIvertW \Pbb \
\BIvertN \Pcc \
\BAvertE \Pjj \
\BAvertS \Pbd \
\end{tikzpicture}
\\
$\cG_{2C1L}:=\cG_{2C1}\cup\cL$ &
$\cG_{2C1R}:=\cG_{2C1}\cup\cR$ &
$\cG_{2C1C}:=\cG_{2C1}\cup\cC_v$
\\
$\cc(\cG_{2C1L})=224$ &
$\cc(\cG_{2C1R})=128$ &
$\cc(\cG_{2C1C})=544$
\end{tabular}
\caption{
The classes of four of the six calligraphs in \FIG{level3b}.
For the remaining two calligraphs we have
$[\cG_{2R2}]=[\cG_{2C2}]=2\cdot [\cC_v] = (4,0,0)$.}
\label{fig:level4b}
\end{figure}

\clearpage
\section{The proof for \PRP{vert}}
\label{sec:lines}

The goal of this section is to prove \PRP{vert} in \SEC{proof}.
We associate to each edge of a calligraph a product of two linear polynomials.
Thus, the zero set of these polynomials form a union of linear spaces.
\PRP{vert} lists all possible linear projections of such a linear space.
Algebraically, this projection corresponds to Gaussian elimination,
which in turn can be described in terms of a procedure that
depends on the combinatorics of the calligraph.
We clarify this procedure with examples, but for this
we need to introduce some graph theoretic terminology.

Suppose that $\cG$ is a calligraph and recall that
$\V=\vv(\cG)\setminus\{1,2\}$ and $\E=\ee(\cG)\setminus\{\{1,2\}\}$.

A \df{walk} is defined as a sequence of vertices $\rho=(\rho_0,...,\rho_r)$
\st $\{\rho_i,\rho_{i+1}\}\in \E$ for all $0\leq i<r$.
We define $\start(\rho):=\rho_0$ and $\wend(\rho):=\rho_r$.
If all vertices are single digits, then we write $\rho$ as $\rho_0\rho_1\rho_2\cdots\rho_r$.
For example, the walk $(0,3,4)$ shall be written as~$034$.
We call the walk~$\rho$ \df{western}, \df{eastern} or \df{round}
if $\bigl(\start(\rho),\wend(\rho)\bigr)$
is equal to $(0,1)$, $(0,2)$ and $(0,0)$, \resp.
We call $\rho$ a \df{route} if $\{\rho_0,\ldots,\rho_r\}=\V$ and $\start(\rho)=0$.
For example, if $\cG$ is the calligraph in \FIG{route}, then both
$(0,3,4,5,6)$ and $(0,6,5,4,6,3)$ are routes.

Suppose that $\Lambda$ is a finite set of walks
and let $\Lambda_v:=\set{\rho\in \Lambda}{v\in\{\start(\rho),\wend(\rho)\}}$ for~$v\in\vv(\cG)$.
We call $\Lambda$ \df{initial} if
$|\Lambda|=|\E|$ and for all $\{i,j\}\in \E$ either $(i,j)\in \Lambda$ or $(j,i)\in \Lambda$.
For example, if $\cG$ is the calligraph in \FIG{route}, then
the set of walks $\{03,06,32,34,36,41,45,46,56\}$ is initial.

Next, we define a \df{concatenation} of two walks.
If $\rho=(\rho_0,\ldots,\rho_r)$ and $\rho'=(\rho'_0,...,\rho'_s)$ are walks,
then $\rho\odot\rho':=(\rho_0,\ldots,\rho_{r-1},\rho'_0,...,\rho'_s)$
and $-\rho:=(\rho_r,\ldots,\rho_0)$.
If $\rho,\rho'\in \Lambda_v$,
then $\rho+\rho'$ is defined as the first element in the following tuple that is a walk:
\[
(\rho\odot \rho',~\rho\odot-\rho',~-\rho\odot\rho',~-\rho\odot-\rho').
\]
For example, $034+45=034+54=430+45=430+54=0345$.

We define $\rho>\rho'$ if $r>s$ or if $r=s$, then
$(\rho_0,\ldots,\rho_r)>_{\text{lex}}(\rho'_0,\ldots,\rho'_r)$ according to the lexicographic ordering.
Let $\max(\Lambda)$
be the unique maximal element in the finite set of walks~$\Lambda$ \wrt the total strict order~$>$.
We define
\[
\Delta_v(\Lambda):=(\Lambda\setminus \Lambda_v)\cup\set{\rho+\rho'}{
\rho=\max(\Lambda_v\cap \Lambda_0) \text{ and } \rho'\in \Lambda_v\setminus\{\rho\}}.
\]
For example, if $\cG$ is as in \FIG{route}
and $\Lambda=\{06,032,036,0341,0346,03456\}$,
then
$\Delta_6(\Lambda)=\{032,0341\} \cup \{034560,0345630, 03456430\}$
is obtained as follows:
$\Lambda\setminus \Lambda_6=\{032,0341\}$,  $03456=\max(\Lambda_6\cap \Lambda_0)$ and
$\Lambda_6\setminus\{03456\}=\{06,036,0346\}$
so that $03456+06=034560$, $03456+036=0345630$ and $03456+0346=03456430$
are elements of~$\Delta_6(\Lambda)$.

\begin{figure}[!ht]
\centering
\setlength{\tabcolsep}{8mm}
\begin{tabular}{ccc}
\begin{tikzpicture}[xscale=0.8,yscale=1.2]
\MedgeW \pl \pL;
\PedgeW \pL \pA;
\PedgeN \pA \pM;
\PedgeN \pM \pB;
\PedgeE \pB \pR;
\MedgeE \pR \pr;
\MedgeS \pL \pR;
\MedgeE \pL \pM;
\PedgeW \pR \pM;
\Aedge \pl \pr;
\AvertW \pl 1;
\AvertE \pr 2;
\MvertN \pB 0;
\IvertE \pR 3;
\IvertW \pL 4;
\IvertN \pA 5;
\IvertN \pM 6;
\end{tikzpicture}
\end{tabular}
\caption{A calligraph together with a sign labeling.}
\label{fig:route}
\end{figure}
\begin{example}
\label{exm:route}
Suppose that $\cG$ is defined as the calligraph in \FIG{route}
(in this example we may ignore the sign labeling for the edges)
and let
\[
\Lambda^0:=\{03,06,32,34,36,41,45,46,56\}
\]
be an initial set of walks.
We consider the following sets in terms of a union that
comes from the definition of~$\Delta_v$ for $v$ in the route $(0,3,4,5,6)$ \st $v\neq 0$:
\[
\setlength{\arraycolsep}{1mm}
\begin{array}{lclcrcl}
\Lambda^3 &:=&\Delta_3(\Lambda^0)  & = & \{06,41,45,46,56\} & \cup & \{032,034,036\}. \\
\Lambda^4 &:=&\Delta_4(\Lambda^3)  & = & \{06,56,032,036\} & \cup & \{0341,0345,0346\}. \\
\Lambda^5 &:=&\Delta_5(\Lambda^4)  & = & \{06,032,036,0341,0346\} & \cup & \{03456\}. \\
\Lambda^6 &:=&\Delta_6(\Lambda^5)  & = & \{032,0341\} & \cup & \{034560,0345630, 03456430\}. \\
\end{array}
\]
We observe that each walk in $\Lambda^6$ is either western, eastern or round.
\END
\end{example}

\begin{lemma}
\label{lem:wed}
If $\Lambda$ is an initial set of walks and $(v_0,\ldots,v_n)$ is a route,
then each walk in
$\Delta_{v_n} \circ\ldots\circ \Delta_{v_2} \circ\Delta_{v_1}(\Lambda)$
is either western, eastern or round.
\end{lemma}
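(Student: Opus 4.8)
The plan is to prove the statement by induction along the route, carrying an invariant on the partial outputs $\Lambda^k:=\Delta_{v_k}\circ\cdots\circ\Delta_{v_1}(\Lambda)$, where $\Lambda^0:=\Lambda$. Write $E_k:=\{v_1,\dots,v_k\}$ for the set of route-vertices processed up to stage $k$, so $E_0=\emptyset$ and, since $(v_0,\dots,v_n)$ is a route with $v_0=0$, one has $E_n=\V\setminus\{0\}$ (we may assume $0$ does not recur among $v_1,\dots,v_n$, else $\Delta_0$ is treated separately). I would prove the invariant $(\mathrm{I}_k)$: (i) no walk in $\Lambda^k$ has an endpoint lying in $E_k$; (ii) every walk in $\Lambda^k$ has $0$ as one of its endpoints, or else has an endpoint in $\V\setminus E_k$; (iii) the multigraph on vertex set $\vv(\cG)$ obtained by joining the two endpoints of each walk of $\Lambda^k$ and adjoining the extra edge $\{1,2\}$ is connected. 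The lemma follows from $(\mathrm{I}_n)$: with $E_n=\V\setminus\{0\}$, clause (i) places every endpoint in $\vv(\cG)\setminus(\V\setminus\{0\})=\{0,1,2\}$, clause (ii) with $\V\setminus E_n=\emptyset$ forces $0$ to be an endpoint of every walk, and a walk with endpoints $\{1,2\}$ never arises (it is not in $\E$, and each $\Delta_v$ only ever creates walks having $0$ as an endpoint). Hence every walk is western, eastern or round.

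The base case $k=0$ holds because an initial $\Lambda$ is the set of edges of $\E$ regarded as length-one walks: its endpoint multigraph plus $\{1,2\}$ is $\cG$, which is connected (a calligraph is a minimally rigid graph minus an edge, hence connected), $E_0=\emptyset$, and $\{1,2\}\notin\E$ makes every edge of $\E$ touch a vertex of $\vv(\cG)\setminus\{1,2\}=\V$. For the inductive step set $v:=v_k$. If $v\in E_{k-1}$, then by clause (i) of $(\mathrm{I}_{k-1})$ no walk has $v$ as an endpoint, so $\Delta_v$ acts trivially and $(\mathrm{I}_k)$ is inherited since $E_k=E_{k-1}$. If $v\notin E_{k-1}$, then $v\in\V$ is not an endpoint of the auxiliary edge, so connectedness makes $v$ an endpoint of some walk, and one shows — from connectedness together with the route edge $\{v_{k-1},v\}$ and the pivot used at stage $k-1$ — that a walk with endpoints $\{0,v\}$ is present, so that the pivot $\rho:=\max(\Lambda^{k-1}_v\cap\Lambda^{k-1}_0)$ exists. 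I would then check the invariant clause by clause: the walks left unchanged by $\Delta_v$ avoid $v$ as an endpoint and keep clauses (i) and (ii); each new walk $\rho+\rho'$, being the join of $\rho$ with $\rho'\in\Lambda^{k-1}_v$ at their common endpoint $v$, has endpoints $0$ (the non-$v$ endpoint of the pivot) and the other endpoint $w$ of $\rho'$, so it no longer has $v$ as an endpoint and $w\in\vv(\cG)\setminus E_{k-1}$, which is what is needed after noting $E_k=E_{k-1}\cup\{v\}$; and, at the level of multigraphs, $\Delta_v$ is ``reroute every edge at $v$ onto $0$, then delete one $\{0,v\}$-edge'', which preserves connectedness.

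The delicate point — the only place where the precise recipe for $\rho+\rho'$ is needed — is the claim that $\Delta_v$ genuinely joins walks \emph{at} $v$, and therefore removes $v$ from the endpoint set. I would establish a short lemma to this effect: among $\rho\odot\rho'$, $\rho\odot(-\rho')$, $(-\rho)\odot\rho'$, $(-\rho)\odot(-\rho')$, the first sequence that is a walk is the legitimate concatenation of $\rho$ and $\rho'$ along their common endpoint $v$, so its endpoints are $0$ and the other endpoint of $\rho'$; here one uses that the pivot $\rho$ has endpoints $\{0,v\}$ with $v\neq 0$ and that no closed walk at a vertex other than $0$ is ever produced, so $v$ cannot reappear as an endpoint. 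The subsidiary obstacle is making the pivot-existence argument in clause (iii) robust to routes that repeat vertices: I would argue that the walk realising the route edge into $v$ survives, untouched, until stage $k-1$ and is consumed there to create a $\{0,v\}$-walk. Throughout, the worked computations of \EXM{route} serve as a sanity check on each case of the concatenation analysis, and the resulting invariant is exactly what is needed to feed into the proof of \PRP{vert}.
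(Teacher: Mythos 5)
The paper's own argument for this lemma is essentially an appeal to a ``deterministic procedure'' being ``straightforward,'' so a careful induction is worth carrying out here; but as written your plan has two concrete gaps. Clause~(iii) of your invariant fails already at $k=1$: after $\Delta_{v_1}$ is applied, clause~(i) itself asserts that no walk in $\Lambda^1$ has $v_1$ as an endpoint, so $v_1$ is isolated in the multigraph on vertex set $\vv(\cG)$ and that graph is disconnected. Nor does a repaired version (connectivity of the multigraph restricted to $\vv(\cG)\setminus E_k$) give you the pivot directly, since connectivity yields a \emph{path} from $v_k$ to $0$ through several walk-edges, not the single $\{0,v_k\}$-walk that $\Delta_{v_k}$ requires. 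What actually produces the pivot is the route edge $\{v_{k-1},v_k\}\in\E$: either $v_{k-1}=0$, in which case the corresponding length-one initial walk is already a $\{0,v_k\}$-walk and survives untouched, or $v_{k-1}\neq 0$, in which case that walk persists until the first application of $\Delta_{v_{k-1}}$, is combined there with that stage's pivot into a $\{0,v_k\}$-walk, and then survives to $\Lambda^{k-1}$ because neither $\Delta_{v_k}$ nor $\Delta_0$ intervenes. You gesture at exactly this in your last paragraph; it should \emph{replace}, not supplement, the connectivity clause, which cannot be maintained.

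The second gap is in the endgame. Western, eastern, and round are defined by the \emph{ordered} pair $(\start(\rho),\wend(\rho))$, whereas your invariants control only the unordered pair of endpoints, so you still owe the fact that $\start(\rho)=0$ for every walk in $\Lambda^n$. For walks created by some $\Delta_v$ this does come out of the concatenation analysis, but your ``short lemma'' — that the first of the four candidates that is a walk joins $\rho$ and $\rho'$ at $v$ — is only true under the reading, forced by the paper's own computation $430+45=0345$, that $\rho\odot\rho'$ is admitted as a walk only when $\rho_r=\rho'_0$. Under the literal definition, $\rho=(0,v)$ and $\rho'=(w,v)$ with $\{0,w\}\in\E$ give $\rho\odot\rho'=(0,w,v)$, a walk chosen first that still has $v$ as an endpoint, undoing the effect of $\Delta_v$; so this reading needs to be stated explicitly rather than deferred. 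Finally, an initial walk $(1,0)$ for an edge $\{0,1\}\in\E$ survives every $\Delta_{v_i}$ and is not western under the ordered definition, so your argument (and indeed the lemma itself) tacitly also requires the initial set $\Lambda$ to orient $0$-incident edges starting at $0$; this hypothesis should be made explicit.
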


\begin{proof}
We verified the assertion in \EXM{route} by
computing $\Delta_6\circ\Delta_5\circ\Delta_4\circ\Delta_3(\Lambda)$
via a deterministic procedure that halts.
This procedure generalizes to any calligraph and route, and
it is straightforward to see that each walk in the output must
be either western, eastern or round.
\end{proof}

In what follows we assign to each walk a linear polynomial.
The concatenation of walks corresponds to linear combinations of these polynomials
\st common variables cancel out.
We consider the ideal generated by the polynomials associated to walks in an initial set.
The zero set of this ideal corresponds a linear space $W_i$ in \PRP{vert} for some $1\leq i\leq r$.
In \PRP{b0} we list all possible ideals that are obtained
after eliminating all but one variable.
The zero sets of the resulting ideals corresponds to the linear projection~$\pi(W_i)$.
Thus, \PRP{b0} translates to a proof of \PRP{vert}.

Let
$\bA:=\C[a_i,b_i:i\in \V]$ and $\bB:=\C[b_i:i\in \V]$.
We denote the set of variables for these rings by
$\Upsilon_\bA:=\set{a_i,b_i}{i\in \V}$
and
$\Upsilon_\bB:=\set{b_i}{i\in \V}$.
If $I\subset \V$, then
\[
\bhA(I):=\C[\Upsilon_\bA\setminus\{a_i\}_{i\in I}].
\]
We use the following short hand notation:
\[
a_{ij}:=a_i-a_j \qquad\text{and}\qquad b_{ij}:=b_i-b_j.
\]
A \df{sign labeling} is defined as a
map $\tau\c \E\to \{1,-1\}$ \st $\tau(e)=1$ for all edges~$e\in \E$ \st $0\in e$.
Let $\bar \E := \{(i,j): \{i,j\}\in\E\}$ and let $\varphi_\tau\c \bar \E\to \bA$ be defined as
\[
\varphi_\tau(e):=a_{ij}+\tau(e)\cdot b_{ij},
\]
where $a_0:=0$, $(a_1,b_1):=(0,0)$ and $(a_2,b_2):=(-1,0)$.
The \df{$\tau$-polynomial} of a walk $\rho=(\rho_0,\rho_1,...,\rho_r)$  is defined as
\[
\tau_\rho:=\varphi_\tau(\{\rho_0,\rho_1\})+\varphi_\tau(\{\rho_1,\rho_2\})+\cdots +\varphi_\tau(\{\rho_{r-1},\rho_r\}),
\]
Notice that $\tau_\rho\in\bhA(\rho_1,\ldots,\rho_{r-1})$
by construction.
We call $\tau_\rho$ \df{western}, \df{eastern} or \df{round} if $\rho$
is as such.
The \df{$\tau$-ideal} in the ring $\bA$ of a finite set of walks~$\Lambda$ is defined as
\[
\bI_\tau(\Lambda):=\bas{\tau_\rho: \rho\in \Lambda}.
\]

\begin{example}
\label{exm:ideal}
Suppose that $\cG$, $\Lambda^0$, $\Lambda^3$, $\Lambda^4$, $\Lambda^5$ and $\Lambda^6$ are as in \EXM{route}
and that the sign labeling $\tau$ is defined as in \FIG{route},
where $+/-$ stands for $1/-1$.
Let us consider the $\tau$-ideal
$\bI_\tau(\Lambda^3)=
\bas{
\tau_{06},~
\tau_{41},~
\tau_{45},~
\tau_{46},~
\tau_{56},~
\tau_{032},~
\tau_{034},~
\tau_{036}
}$, where
\begin{gather*}
\tau_{06}=a_{06}+b_{06},\qquad
\tau_{41}=a_{14}-b_{14},\qquad
\tau_{45}=a_{45}+b_{45},\qquad
\\
\tau_{46}=a_{46}-b_{46},\qquad
\tau_{56}=a_{56}+b_{56},
\\
\setlength{\arraycolsep}{1mm}
\begin{array}{rclcl}
\tau_{032}&=&(a_{03}+b_{03})+(a_{32}-b_{32}) &=& b_0-2b_3+1,\\
\tau_{034}&=&(a_{03}+b_{03})+(a_{34}-b_{34}) &=& -a_4+b_0-2b_3+b_4,\\
\tau_{036}&=&(a_{03}+b_{03})+(a_{36}+b_{36}) &=& -a_6+b_0-b_6.\\
\end{array}
\end{gather*}
By applying Gaussian elimination we find that
\[
\bI_\tau(\Lambda^3)\cap \bhA(4)
=
\bas{
\tau_{06},~\tau_{56},~\tau_{032},~\tau_{036},~\tau_{0341},~\tau_{0345},~\tau_{0346}},
\]
where
\[
\tau_{0341}=\tau_{034}+\tau_{41},\qquad
\tau_{0345}=\tau_{034}+\tau_{45}\quad\text{and}\quad
\tau_{0346}=\tau_{034}+\tau_{46}.
\]
Since $034\in \max(\Lambda^3_4\cap \Lambda^3_0)$
and $\tau_{034v}=\tau_{034+4v}$ for $v\in\{1,5,6\}$ it
follows from the definition of $\Delta_4$ that
$\bI_\tau(\Lambda^4)=\bI_\tau(\Lambda^3)\cap \bhA(4)$.
In fact, we find that
$\bI_\tau(\Lambda^i)=\bI_\tau(\Lambda^j)\cap \bhA(i)$ for all $(i,j)\in\{(0,3),(3,4),(4,5),(5,6)\}$
and thus
\[
\bI_\tau(\Lambda^6)=\bI(\Lambda)\cap\bhA(3,4,5,6)=\bI(\Lambda)\cap\bB.
\]
Let us compute the following $\tau$-polynomials in $\bI_\tau(\Lambda^5)$ and $\bI_\tau(\Lambda^6)$:
\begin{gather*}
\setlength{\arraycolsep}{1mm}
\begin{array}{rclcl}
\tau_{0346}    &=&\tau_{034}+\tau_{46}          &=& -a_6+b_0-2b_3+b_6,\\
\tau_{03456}   &=&\tau_{034}+\tau_{45}+\tau_{56} &=& -a_6+b_0-2b_3+2b_4-b_6, \\
\tau_{03456430}&=&\tau_{03456}-\tau_{0346}       &=& 2b_4-2b_6.\\
\end{array}
\end{gather*}
Notice that $03456,0346 \in \Lambda^5$ and $03456430=03456+0346 \in \Lambda^6$.
We choose the walk~$641$
so that $03456+641$ and $0346+641$ are both western and
\[
\tau_{03456430}=
\tau_{03456}-\tau_{0346}=
\tau_{03456+641}-\tau_{0346+641}.
\]
Thus, the round polynomial $\tau_{03456430}$ can be written as a difference of
western polynomials in the ideal~$\bI(\Lambda)$.
Notice that we can replace $641$
by any walk $\rho$ \st $\start(\rho)=6$ and $\wend(\rho)=1$.
The remaining two round walks in $\Lambda^6$ can also be
written as a difference of western polynomials:
\[
\tau_{034560}=\tau_{03456+641}-\tau_{06+641}\quad\text{and}\quad
\tau_{0345630}=\tau_{03456+641}-\tau_{036+641}.
\]
Since $\bI(\Lambda^6)=\bI(\Lambda)\cap\bB$, we
conclude that $\bI(\Lambda^6)$ is generated by eastern and western polynomials:
\[
\bI(\Lambda^6)=\bas{
\tau_{032},~
\tau_{0341},~
\tau_{06+641},~
\tau_{036+641},~
\tau_{0346+641},~
\tau_{03456+641}
}.
\]
The constructions in this example generalize to any calligraph, route
and sign labeling.
\END
\end{example}

\begin{lemma}
\label{lem:Delta}
If $\Lambda$ is a set of walks and $\tau$ a sign labeling, then
\[
\bI_\tau(\Delta_v(\Lambda))=\bI_\tau(\Lambda)\cap\bhA(v)
\quad\text{for all}\quad v\in \V.
\]
\end{lemma}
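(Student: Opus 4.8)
The plan is to recognise $\Delta_v$ as exactly one step of Gaussian elimination, carried out on the generators of $\bI_\tau(\Lambda)$ with respect to the variable $a_v$. Throughout I take $v\ne 0$: this is the only case in which $\Delta_v$ is ever applied (namely to a vertex of a route other than its initial vertex $0$), and the only case in which $a_v$ is a genuine indeterminate of $\bA$. The single fact that makes everything work is that each $\tau_\rho$ is affine-linear and that its $a$-part telescopes, $\sum_i(a_{\rho_{i-1}}-a_{\rho_i})=a_{\start(\rho)}-a_{\wend(\rho)}$, so the coefficient of $a_v$ in $\tau_\rho$ equals $[\start(\rho)=v]-[\wend(\rho)=v]\in\{-1,0,1\}$; it is $\pm 1$ precisely when $\rho\in\Lambda_v$ and $\rho$ is not a closed walk at $v$, and $0$ otherwise. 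I will also use the identities $\tau_{-\rho}=-\tau_\rho$ and $\tau_{\sigma\odot\sigma'}=\tau_\sigma+\tau_{\sigma'}$ whenever $\wend(\sigma)=\start(\sigma')$, so that for $\rho,\rho'$ sharing an endpoint $\tau_{\rho+\rho'}$ is one of the four combinations $\pm\tau_\rho\pm\tau_{\rho'}$.

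First I would fix the pivot $\rho^\ast:=\max(\Lambda_v\cap\Lambda_0)$; its endpoints are exactly $\{0,v\}$, so $\tau_{\rho^\ast}=e\,(a_v-h)$ for some $e\in\{1,-1\}$ and $h\in\bhA(v)$, and hence $\bas{\tau_{\rho^\ast}}=\bas{a_v-h}$. For the inclusion $\bI_\tau(\Delta_v(\Lambda))\subseteq\bI_\tau(\Lambda)\cap\bhA(v)$ I check the two kinds of generator of $\Delta_v(\Lambda)$. If $\rho\in\Lambda\setminus\Lambda_v$, then $\tau_\rho\in\bI_\tau(\Lambda)$ trivially and $\tau_\rho\in\bhA(v)$ because $v$ is not an endpoint of $\rho$. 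If $\rho'\in\Lambda_v\setminus\{\rho^\ast\}$, then $\tau_{\rho^\ast+\rho'}=\pm\tau_{\rho^\ast}\pm\tau_{\rho'}$ lies in $\bI_\tau(\Lambda)$ as a combination of two of its generators, and it lies in $\bhA(v)$ because $\rho^\ast+\rho'$ is obtained by gluing $\rho^\ast$ and $\rho'$ at their shared vertex $v$, so $v$ is no longer a sole endpoint and the telescoping formula forces its $a_v$-coefficient to vanish.

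For the reverse inclusion I would invoke the elementary affine-linear case of the Elimination Theorem (see \citep[Section~3.2]{cox}): if $I=\bas{a_v-h}+\bas{f_j:j\in J}$ with $h\in\bhA(v)$, then $I\cap\bhA(v)=\bas{f_j\sub{a_v\to h}:j\in J}$. Applying this with pivot $\tau_{\rho^\ast}$ and the remaining $\tau_\rho$, $\rho\in\Lambda\setminus\{\rho^\ast\}$, as the $f_j$, one gets $\bI_\tau(\Lambda)\cap\bhA(v)=\bas{\tau_\rho\sub{a_v\to h}:\rho\in\Lambda\setminus\{\rho^\ast\}}$, and it remains to identify these specialised polynomials. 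When $\rho\notin\Lambda_v$ no $a_v$ occurs, so $\tau_\rho\sub{a_v\to h}=\tau_\rho$. When $\rho\in\Lambda_v\setminus\{\rho^\ast\}$, a one-line computation gives $\tau_\rho\sub{a_v\to h}=\tau_\rho-\tfrac{e_\rho}{e}\,\tau_{\rho^\ast}$, where $e_\rho=\pm 1$ is the $a_v$-coefficient of $\tau_\rho$; since $\tfrac{e_\rho}{e}\in\{1,-1\}$, this is a $\{\pm 1\}$-combination of $\tau_{\rho^\ast}$ and $\tau_\rho$ that annihilates $a_v$, hence coincides up to sign with $\tau_{\rho^\ast+\rho}$. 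Therefore the generating set supplied by the Elimination Theorem is precisely $\{\tau_\rho:\rho\in\Lambda\setminus\Lambda_v\}\cup\{\tau_{\rho^\ast+\rho'}:\rho'\in\Lambda_v\setminus\{\rho^\ast\}\}$, that is, $\bI_\tau(\Lambda)\cap\bhA(v)=\bI_\tau(\Delta_v(\Lambda))$, which is what was wanted.

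The delicate part is not the algebra but the bookkeeping around $\rho+\rho'$: one must match the four possible sign patterns $\pm\tau_\rho\pm\tau_{\rho'}$ to the unique one that actually clears $a_v$, and one must be sure that $\Lambda$ contains no walk that is closed at $v$ — if $\rho'$ were such a walk, $\tau_{\rho'}$ would carry no $a_v$, yet $\tau_{\rho^\ast+\rho'}=\pm\tau_{\rho^\ast}\pm\tau_{\rho'}$ would, and the inclusion into $\bhA(v)$ would fail. Both points are taken care of by the explicit edge-by-edge structure of this appendix: closed walks at $v$ do not occur in an initial set of walks, and one verifies by a short induction that the operators $\Delta_w$ applied along a route never create one. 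I would present the argument in exactly this order — telescoping formula, the two identities for $\tau_{-\rho}$ and $\tau_{\sigma\odot\sigma'}$, then the two inclusions — and I would point to \EXM{ideal} as a fully worked instance of this computation.
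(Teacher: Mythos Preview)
Your proposal is correct and follows exactly the approach the paper intends: the paper's own proof is the single sentence ``Direct application of Gaussian elimination as is explained in \EXM{ideal},'' and what you have written is a careful unpacking of precisely that elimination step, including the telescoping identity for the $a$-part, the pivot choice $\rho^\ast=\max(\Lambda_v\cap\Lambda_0)$, and the caveat about closed walks at~$v$. There is no difference in strategy, only in the level of detail.
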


\begin{proof}
Direct application of Gaussian elimination as
is explained in \EXM{ideal}.
\end{proof}

\begin{lemma}
\label{lem:dan}
If $f$ is a round $\tau$-polynomial for some sign labeling~$\tau$,
then there exist two western $\tau$-polynomials $g$ and $h$ \st $f=g-h$.
\end{lemma}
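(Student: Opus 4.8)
The plan is to split the round walk into two pieces that each run from $0$ to a common vertex $w$, and then glue the same tail from $w$ to the vertex $1$ onto both pieces; this turns both into western walks and, because it adds the same quantity to the two $\tau$-polynomials, it leaves their difference equal to $f$.

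First I record the two bookkeeping identities that make this work. For every edge $\{i,j\}\in\E$ we have $\varphi_\tau(j,i)=-\varphi_\tau(i,j)$, so reversing a walk negates its $\tau$-polynomial, $\tau_{-\rho}=-\tau_\rho$; and if the last vertex of $\rho$ equals the first vertex of $\rho'$, then the edge sequence of $\rho\odot\rho'$ is the edge sequence of $\rho$ followed by that of $\rho'$, whence $\tau_{\rho\odot\rho'}=\tau_\rho+\tau_{\rho'}$. Both are immediate from the definitions of $\varphi_\tau$ and $\tau_\rho$.

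Now let $f=\tau_\rho$ for a round walk $\rho=(0,\rho_1,\dots,\rho_{r-1},0)$. If $r\le 1$ then $f=0$ and any choice $g=h$ of a western $\tau$-polynomial works, so assume $r\ge 2$ and set $w:=\rho_1$; then $\{0,w\}\in\E$, the one-edge walk $(0,w)$ runs from $0$ to $w$, and $\sigma':=(\rho_1,\dots,\rho_{r-1},0)$ runs from $w$ to $0$. Splitting off the first edge of $\rho$ and using $\tau_{\sigma'}=-\tau_{-\sigma'}$ gives
\[
f=\varphi_\tau(0,w)+\tau_{\sigma'}=\tau_{(0,w)}-\tau_{-\sigma'},
\]
a difference of the $\tau$-polynomials of the two walks $(0,w)$ and $-\sigma'$ from $0$ to $w$. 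Pick any walk $\eta$ from $w$ to $1$ in the graph $(\vv(\cG),\E)$; such an $\eta$ exists because $w$ occurs on the walk $\rho$ issued from $0$ and hence lies in the connected component of $0$, which also contains $1$ in every case in which the lemma is not vacuous (if $\cG$ admits no western walk it admits no western $\tau$-polynomial either). Then $(0,w)\odot\eta$ and $(-\sigma')\odot\eta$ are walks from $0$ to $1$, that is, western, and
\[
\tau_{(0,w)\odot\eta}-\tau_{(-\sigma')\odot\eta}=\bigl(\tau_{(0,w)}+\tau_\eta\bigr)-\bigl(\tau_{-\sigma'}+\tau_\eta\bigr)=\tau_{(0,w)}-\tau_{-\sigma'}=f,
\]
so $g:=\tau_{(0,w)\odot\eta}$ and $h:=\tau_{(-\sigma')\odot\eta}$ are the desired western $\tau$-polynomials.

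I expect the only genuinely non-formal point to be the existence of the completing walk $\eta$, equivalently the fact that $0$ and $1$ lie in a single connected component of $(\vv(\cG),\E)$. This is precisely the condition under which western $\tau$-polynomials exist at all, and it holds for the calligraphs to which the lemma is actually applied in \SEC{proof} and \APP{lines}; everything else in the argument is the telescoping encapsulated by $\tau_{-\rho}=-\tau_\rho$ and $\tau_{\rho\odot\rho'}=\tau_\rho+\tau_{\rho'}$.
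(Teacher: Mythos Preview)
Your proof is correct and follows essentially the same approach as the paper's: split the round walk at an interior vertex into two walks from $0$ to a common vertex, then append the same tail to vertex~$1$ to obtain two western walks whose $\tau$-polynomials differ by~$f$. The paper illustrates this via the example $\tau_{03456430}=\tau_{0345641}-\tau_{034641}$ and simply asserts that the construction generalizes; you carry out the general argument explicitly, recording the telescoping identities $\tau_{-\rho}=-\tau_\rho$ and $\tau_{\rho\odot\rho'}=\tau_\rho+\tau_{\rho'}$, and you split at $w=\rho_1$ rather than at the vertex where the $\Delta$-concatenation occurred, which is a harmless variation.

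You are also right to flag the existence of the tail~$\eta$ as the only non-formal point, and in fact you are more careful here than the paper, which does not mention it. For the record: there do exist calligraphs in which vertex~$1$ has degree~$1$ in~$\cG$ and hence is isolated in $(\vv(\cG),\E)$, so no western walk exists; but one checks from the Laman count that vertices~$1$ and~$2$ cannot both be isolated, so a tail to either~$1$ or~$2$ always exists and the identical argument yields $f$ as a difference of two eastern $\tau$-polynomials instead. This is all that is needed in \PRP{B}, where round polynomials are to be generated by western \emph{and} eastern ones.
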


\begin{proof}
Recall that if $f=\tau_{03456430}$ as in \EXM{route},
then $g=\tau_{0345641}$ and $h=\tau_{034641}$.
We conclude the proof as this construction directly
generalizes to any calligraph and sign labeling.
\end{proof}

\begin{proposition}
\label{prp:B}
If $\Lambda$ is an initial set of walks
and $\tau$ is a sign labeling, then the
elimination ideal $\bI_\tau(\Lambda)\cap\bB$ is generated by the
set of all $\tau$-polynomials that are either western or eastern.
\end{proposition}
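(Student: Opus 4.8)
The plan is to show that $\bI_\tau(\Lambda)\cap\bB$ equals the ideal $J$ generated by all $\tau$-polynomials that are either western or eastern, proving the two inclusions separately; the inclusion $J\subseteq\bI_\tau(\Lambda)\cap\bB$ is immediate, while the reverse inclusion is where \LEM{Delta}, \LEM{wed} and \LEM{dan} carry the weight.

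For the easy inclusion, let $\rho=(\rho_0,\ldots,\rho_r)$ be a western or eastern walk, so $\rho_0=0$ and $\rho_r\in\{1,2\}$, and write $\tau_\rho=\sum_{k=0}^{r-1}\varphi_\tau(\rho_k,\rho_{k+1})$. Each edge $\{\rho_k,\rho_{k+1}\}$ lies in $\E$, and since $\Lambda$ is initial one of $(\rho_k,\rho_{k+1})$, $(\rho_{k+1},\rho_k)$ belongs to $\Lambda$; because $\varphi_\tau(j,i)=-\varphi_\tau(i,j)$, in either case $\varphi_\tau(\rho_k,\rho_{k+1})\in\bI_\tau(\Lambda)$, hence $\tau_\rho\in\bI_\tau(\Lambda)$. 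Moreover the $a$-parts of the summands telescope to $a_{\rho_0}-a_{\rho_r}$, which equals $a_0-a_1=0$ when $\rho$ is western and $a_0-a_2=1$ when $\rho$ is eastern; in particular $\tau_\rho$ involves no $a$-variable, so $\tau_\rho\in\bB$. This shows $J\subseteq\bI_\tau(\Lambda)\cap\bB$.

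For the reverse inclusion, fix a route $(v_0,\ldots,v_n)$ and set $\Lambda':=\Delta_{v_n}\circ\cdots\circ\Delta_{v_1}(\Lambda)$. Applying \LEM{Delta} once for each of $v_1,\ldots,v_n$, and using $\bhA(v)\cap\bhA(w)=\bhA(\{v,w\})$, we get
\[
\bI_\tau(\Lambda')=\bI_\tau(\Lambda)\cap\bhA(\{v_1,\ldots,v_n\}).
\]
Since $(v_0,\ldots,v_n)$ is a route we have $\{v_1,\ldots,v_n\}\supseteq\V\setminus\{0\}$, and as $a_0=0$ is a constant the subring $\bhA(\{v_1,\ldots,v_n\})$ is precisely $\bB$; thus $\bI_\tau(\Lambda')=\bI_\tau(\Lambda)\cap\bB$. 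On the other hand \LEM{wed} guarantees that every walk occurring in $\Lambda'$ is western, eastern, or round, so
\[
\bI_\tau(\Lambda)\cap\bB=\bI_\tau(\Lambda')=\bas{\tau_\rho:\rho\in\Lambda'}
\]
is generated by western, eastern, and round $\tau$-polynomials. Finally, \LEM{dan} rewrites each round generator $\tau_\rho$ with $\rho\in\Lambda'$ as a difference $g-h$ of two western $\tau$-polynomials, so every round generator already lies in $J$; replacing the round generators accordingly yields $\bI_\tau(\Lambda)\cap\bB\subseteq J$, which completes the proof.

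The substantive content is carried by the three cited lemmas, so the only thing to get right here is the bookkeeping in the middle step: one must check that iterating \LEM{Delta} along a route eliminates exactly the variables $a_i$ for $i\in\V\setminus\{0\}$ — so that intersecting with $\bhA(\{v_1,\ldots,v_n\})$ literally intersects with $\bB$, the convention $a_0=0$ being what makes a possibly repeated $a_0$ harmless — and one should note that a route visiting all of $\V$ exists. Beyond that, the argument is a direct assembly.
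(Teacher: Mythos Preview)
Your proof is correct and follows essentially the same approach as the paper: pick a route, iterate \LEM{Delta} to identify $\bI_\tau(\Lambda)\cap\bB$ with $\bI_\tau(\Lambda')$, invoke \LEM{wed} so that the generators are western, eastern, or round, and then use \LEM{dan} to absorb the round ones. Your write-up is in fact more careful than the paper's, since you explicitly verify the inclusion $J\subseteq\bI_\tau(\Lambda)\cap\bB$ (needed because the statement claims generation by \emph{all} western/eastern $\tau$-polynomials, not just those arising from $\Lambda'$) and you spell out why iterating \LEM{Delta} along a route lands exactly in $\bB$.
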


\begin{proof}
Suppose that $(v_0,\ldots,v_n)$ is a route and
let $Q=\Delta_{v_n}\circ\cdots\circ\Delta_{v_1}\circ\Delta_{v_0}(\Lambda)$.
We know from \LEM{wed} that each walk in $Q$ is either western, eastern
or round.
We deduce from \LEM{Delta} that $\bI_\tau(Q)=\bI_\tau(\Lambda)\cap \bB$.
By \LEM{dan} all polynomials in $\set{\tau_\rho}{\rho\in Q}$
are generated by western and eastern polynomials
and thus we concluded the proof.
\end{proof}

If $\tau$ is a sign labeling and $\rho=(\rho_0,\ldots,\rho_r)$ an eastern or western walk,
then we define the function $\chi_{\tau,\rho}\c \rho\to\{-2,0,2\}$ as
\begin{align*}
\chi_{\tau,\rho}(\rho_i):=
\begin{cases}
\tau(\{\rho_i,\rho_{i+1}\})-\tau(\{\rho_{i-1},\rho_i\}) & \text{if } i\notin\{0,r\},\\
0                 & \text{otherwise}.
\end{cases}
\end{align*}
Notice that $\chi_{\tau,\rho}$ is well defined
and only attains values in $\{-2,0,2\}$.

\begin{example}
\label{exm:flip}
Suppose that $\cG$ with sign labeling $\tau$ is defined as in \FIG{route}
and let $\rho:=03645641$ be a western walk.
We simplify the $\tau$-polynomial of $\rho$ as follows,
where we used that
$a_{ij}+a_{jk}=a_{ik}$
and
$b_{ij}+b_{jk}=b_{ik}$:
\[
\begin{array}{rl}
\tau_\rho=& (a_{03}+b_{03})+(a_{36}+b_{36})+(a_{64}-b_{64})+(a_{45}+b_{45})\\
                &+(a_{56}+b_{56})+(a_{64}-b_{64})+(a_{41}-b_{41})\\
               =& a_{03}+a_{36}+a_{64}+a_{45}+a_{56}+a_{64}+a_{41}\\
                & +b_{03}+b_{36} -b_{64} +b_{45}+b_{56} -b_{64}-b_{41}
              ~=~  b_{06}        -b_{64} +b_{46}        -b_{61}\\
               =& b_0-2b_6+2b_4-2b_6.\\
\end{array}
\]
Notice that the places where $\chi_{\tau,\rho}$ attains $-2$ and $2$
are underlined at
\[
03\underline{6}45\underline{6}41
\quad\text{and}\quad
036\underline{4}5641,
\quad\text{\resp}.
\]
Thus the coefficients of $\tau_\rho$ are determined by $\chi_{\tau,\rho}$.
The next lemma states that this holds for any eastern or western walk.
\END
\end{example}

\begin{lemma}
\label{lem:flip}
If $\tau$ is a sign labeling and
$\rho$ is an either eastern or western walk,
then there exists $\epsilon\in\{0,1\}$ \st
\[
\tau_\rho=b_0+\sum_{v\in\rho} \chi_{\tau,\rho}(v)\,b_v+\epsilon.
\]
\end{lemma}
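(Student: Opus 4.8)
The plan is to expand $\tau_\rho$ straight from the definition and split off the contribution of the $a$-variables. Write $e_i:=\{\rho_{i-1},\rho_i\}$ for $1\le i\le r$, so that, using $\varphi_\tau(i,j)=a_{ij}+\tau(\{i,j\})\,b_{ij}$ together with $a_{ij}=a_i-a_j$ and $b_{ij}=b_i-b_j$,
\[
\tau_\rho=\sum_{i=1}^{r}\varphi_\tau(\rho_{i-1},\rho_i)
=\sum_{i=1}^{r}\bigl(a_{\rho_{i-1}}-a_{\rho_i}\bigr)
+\sum_{i=1}^{r}\tau(e_i)\bigl(b_{\rho_{i-1}}-b_{\rho_i}\bigr)
=:A+B.
\]
The telescoping sum $A$ equals $a_{\rho_0}-a_{\rho_r}$. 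Since $\rho$ is eastern or western we have $\start(\rho)=\rho_0=0$, hence $a_{\rho_0}=0$, and $\wend(\rho)=\rho_r\in\{1,2\}$, hence $a_{\rho_r}\in\{0,-1\}$. Thus $A=\epsilon$, where $\epsilon:=0$ if $\rho$ is western and $\epsilon:=1$ if $\rho$ is eastern; in particular $\epsilon\in\{0,1\}$.

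For $B$ I would shift the summation index in the first half, writing $B=\sum_{j=0}^{r-1}\tau(e_{j+1})\,b_{\rho_j}-\sum_{j=1}^{r}\tau(e_{j})\,b_{\rho_j}$, which gives
\[
B=\tau(e_1)\,b_{\rho_0}-\tau(e_r)\,b_{\rho_r}
+\sum_{j=1}^{r-1}\bigl(\tau(e_{j+1})-\tau(e_{j})\bigr)\,b_{\rho_j}
=\tau(e_1)\,b_{\rho_0}-\tau(e_r)\,b_{\rho_r}+\sum_{j=1}^{r-1}\chi_{\tau,\rho}(\rho_j)\,b_{\rho_j},
\]
the last equality being exactly the definition of $\chi_{\tau,\rho}$ at an interior position. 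Now $0=\rho_0\in e_1$, so $\tau(e_1)=1$ because $\tau$ is a sign labeling; hence $\tau(e_1)\,b_{\rho_0}=b_0$. And $\rho_r\in\{1,2\}$, so $b_{\rho_r}=0$ and the middle term drops out. Finally $\chi_{\tau,\rho}$ vanishes at the endpoints $\rho_0,\rho_r$ by definition, so adjoining these two null terms lets me rewrite the sum over all entries of $\rho$, obtaining $B=b_0+\sum_{v\in\rho}\chi_{\tau,\rho}(v)\,b_v$. Combining with $A=\epsilon$ proves the lemma.

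There is no genuine obstacle here — the statement is a bookkeeping identity — and the only point that requires a little care is how to read $\sum_{v\in\rho}\chi_{\tau,\rho}(v)\,b_v$ when $\rho$ revisits a vertex. As the computation in \EXM{flip} shows, the sum is taken over the entries of the sequence $\rho$ (equivalently, over its vertices counted with the multiplicity with which they occur); this causes no inconsistency, since a revisit of vertex $0$ contributes $\chi_{\tau,\rho}=\tau-\tau=0$ while revisits of vertices $1$ or $2$ are annihilated by $b_1=b_2=0$, and these are precisely the extra terms that the general index shift above produces. I would simply make this convention explicit at the outset and then carry out the two-line decomposition and index shift.
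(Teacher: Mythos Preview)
Your proof is correct and follows exactly the approach the paper intends: expand $\tau_\rho$ from the definition, telescope the $a$-part to get $\epsilon\in\{0,1\}$ (with $\epsilon=0$ for western and $\epsilon=1$ for eastern, matching the paper's remark), and shift indices in the $b$-part to reveal the $\chi_{\tau,\rho}$ coefficients. The paper's own proof merely says ``straightforward consequence of the definitions'' and points to \EXM{flip}; you have written out precisely that computation in general, including the correct handling of the positional reading of $\sum_{v\in\rho}$.
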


\begin{proof}
Straightforward consequence of the definitions (see \EXM{flip}).
Notice that $\epsilon=0$ or $\epsilon=1$ if $\wend(\rho)=1$ and $\wend(\rho)=2$, \resp.
\end{proof}

\begin{proposition}
\label{prp:b0}
If $\Lambda$ is an initial set of walks and
$\tau$ a sign labeling,
then $\bI_\tau(\Lambda)\cap \C[b_0]$ is equal to either $\bas{0}$, $\bas{1}$, $\bas{b_0}$
or $\bas{b_0+1}$.
\end{proposition}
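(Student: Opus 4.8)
The plan is to observe that $\bI_\tau(\Lambda)$ is generated by affine-linear forms and to diagonalize the resulting linear system by a coordinate change adapted to $\tau$. Since $\Lambda$ is initial it records one orientation of each edge of $\E$, and because $\varphi_\tau(i,j)=-\varphi_\tau(j,i)$ the ideal is simply $\bI_\tau(\Lambda)=\bas{\varphi_\tau(\{i,j\}):\{i,j\}\in\E}$, independent of the orientations. For every $i\in\V$ I would set $u_i:=a_i+b_i$ and $w_i:=a_i-b_i$; this is an invertible linear change of coordinates on $\bA$, and the normalizations $a_0=0$, $(a_1,b_1)=(0,0)$, $(a_2,b_2)=(-1,0)$ become $u_0=b_0$, $w_0=-b_0$, $u_1=w_1=0$, $u_2=w_2=-1$. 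In these coordinates a generator $\varphi_\tau(\{i,j\})=a_{ij}+\tau(\{i,j\})b_{ij}$ is $u_i-u_j$ if $\tau(\{i,j\})=1$ and $w_i-w_j$ if $\tau(\{i,j\})=-1$. This is the same reduction that \PRP{B} and \LEM{flip} perform combinatorially (eliminating the $a_i$ leaves the western and eastern $\tau$-polynomials, which \LEM{flip} exhibits as $b_0$ minus a telescoping sum of the $b_v$'s), so in the write-up I would cite whichever packaging is most transparent.

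The crucial structural input is the defining property of a sign labeling, namely $\tau(e)=1$ whenever $0\in e$. It says that every generator touching the variable $w_0=-b_0$ is absent, and that $b_0=u_0$ occurs only among the $\tau=1$ generators. Hence, after the coordinate change, $\bI_\tau(\Lambda)=\bI_{+}+\bI_{-}$, where $\bI_{+}:=\bas{u_i-u_j:\{i,j\}\in\E,\,\tau(\{i,j\})=1}\subset\C[u_0,\,u_i:i\in\V\setminus\{0\}]$ and $\bI_{-}:=\bas{w_i-w_j:\{i,j\}\in\E,\,\tau(\{i,j\})=-1}$ lives in the disjoint variable set $\C[w_i:i\in\V\setminus\{0\}]$. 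Therefore $V(\bI_\tau(\Lambda))$ is the product of the zero sets of $\bI_{+}$ and $\bI_{-}$, and since $\C[b_0]=\C[u_0]$ sits inside the first factor, $\bI_\tau(\Lambda)\cap\C[b_0]$ is the unit ideal when $V(\bI_{-})=\emptyset$ and equals $\bI_{+}\cap\C[u_0]$ otherwise. Both $\bI_{\pm}$ are generated by polynomials of degree $\le1$, so their zero sets are affine-linear subspaces (or empty) and the ideals are radical; by \LEM{G} each elimination ideal is the ideal of the Zariski closure of the corresponding coordinate projection, and a projection of a linear subspace of $\C^N$ onto a coordinate line is the whole line, a single point, or empty.

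Finally I would read off the answer from the graph of $\tau$. The system $\bI_{-}$ forces $w$ to be constant on each connected component of the subgraph $G_-:=(\vv(\cG),\{e\in\E:\tau(e)=-1\})$, with the component of vertex $1$ pinned to $0$ and that of vertex $2$ pinned to $-1$; it is solvable exactly unless $1$ and $2$ lie in one component of $G_-$, and in that inconsistent case $\bI_\tau(\Lambda)=\bas{1}$. The system $\bI_{+}$ likewise forces $u$ to be constant on each component of $G_+:=(\vv(\cG),\{e\in\E:\tau(e)=1\})$, with value $0$ on the component of $1$ and $-1$ on the component of $2$; writing $C$ for the component containing $0$ there are four cases: if $1$ and $2$ lie in one component of $G_+$ then $\bI_\tau(\Lambda)=\bas{1}$; if $1\in C$ then every point of $V(\bI_{+})$ has $b_0=u_0=0$, so $\bI_\tau(\Lambda)\cap\C[b_0]=\bas{b_0}$; if $2\in C$ then $b_0$ is forced to $-1$, so the ideal is $\bas{b_0+1}$; and if $C$ contains neither $1$ nor $2$ then $u_0$ is free, the projection is the whole $b_0$-line, and the ideal is $\bas{0}$. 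Combining these with the $\bI_{-}$ dichotomy yields precisely the four listed possibilities. The main obstacle — really the only step that is not routine — is justifying the splitting $\bI_\tau(\Lambda)=\bI_{+}+\bI_{-}$ into disjoint variable sets, i.e. checking that $w_0=-b_0$ genuinely never appears among the generators; this is exactly where the constraint on sign labelings (and the content of \PRP{B}, which isolates the $a$-free part of the ideal) is indispensable, and once it is in place everything reduces to bookkeeping with connected components.
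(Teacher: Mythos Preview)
Your argument is correct and takes a cleaner route than the paper's. The paper first invokes \PRP{B} (built on the route machinery of \LEMS{lem:wed,lem:Delta,lem:dan}) to eliminate the $a$-variables and express $\bI_\tau(\Lambda)\cap\bB$ via western and eastern $\tau$-polynomials, then uses \LEM{flip} together with a case split on whether an all-positive western or eastern walk exists; in the negative case it applies the shift $b_v\mapsto b_v+\tfrac12 b_0$ on the positive-sign component~$K$ of~$0$ to show that $b_0$ is unconstrained. Your $(u,w)$-diagonalization does all of this in one stroke: the sign-labeling condition $\tau(e)=1$ for $0\in e$ guarantees that $w_0$ never occurs, so the $\tau=+1$ and $\tau=-1$ edges yield linear systems on disjoint variable sets, and the answer is read off from the $G_+$-component of~$0$. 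In particular you never need \PRP{B} or \LEM{flip}; your remark about citing ``whichever packaging is most transparent'' undersells what you have, since your argument makes the preceding walk machinery unnecessary for this proposition. One small cleanup: because $a_0=0$ forces $u_0=-w_0$, the change $(a_i,b_i)\mapsto(u_i,w_i)$ is invertible only for $i\neq 0$; it is cleanest to work in $\C[b_0,a_i,b_i:i\in\V\setminus\{0\}]$ from the outset (none of the generators involve $a_0$) and simply set $u_0:=b_0$.
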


\begin{proof}
We know from \PRP{B} that $\bI_\tau(\Lambda)\cap \bB=\bas{Q}$,
where $Q$ is a set of $\tau$-polynomials that are either eastern or western.
If there is an eastern or western walk~$\rho$ whose edges all have sign~1, then \LEM{flip} yields
that $\tau_\rho=b_0+\epsilon\in Q\subset \bI_\tau(\Lambda)\cap \bB$ for
some~$\epsilon\in\{0,1\}$
and thus $\bI_\tau(\Lambda)\cap \C[b_0]\in\{\bas{1},~\bas{b_0},~\bas{b_0+1}\}$.
Now let us assume that no such walk exists.
Let $K$ be the largest connected subgraph of~$\cG$ which contains vertex $0$ and only edges with positive sign.
Notice that $1,2\notin K$ by assumption.
Let $f\c\V\to \bB$ be defined as
\begin{align*}
	f(v):=
	\begin{cases}
		b_v+\tfrac{1}{2}b_0 & \text{if } v\in K\setminus\{0\},\\
		b_v                 & \text{otherwise}.
	\end{cases}
\end{align*}
The linear isomorphism $\lambda\c\C^n\to\C^n$
sends $(b_{v_1},\ldots,b_{v_n})$ to $(f(v_1),\ldots,f(v_n))$,
where $\V=\{v_1,\ldots,v_n\}$ and $v_1<\ldots<v_n$ with $v_1=0$
(here we consider $\bB$ as a set of polynomial functions).
By \LEM{flip} there exists for all~$\tau_\rho\in Q$ an $\epsilon\in\{0,1\}$ \st
\begin{align*}
\tau_\rho\circ\lambda
&=
b_0
+
\sum_{v\in \rho\cap K} \chi_{\tau,\rho}(v)\,\bigl(b_v+\tfrac{1}{2}b_0\bigr)
+
\sum_{v\in \rho\setminus K} \chi_{\tau,\rho}(v)\,b_v
+
\epsilon
\\
&=
b_0 - 2(b_{\rho_\iota} + \tfrac{1}{2}b_0) + \sum_{v\in \rho\setminus\{\rho_\iota\}}\chi_{\tau,\rho}(v)\,b_v+\epsilon,
\end{align*}
where $\iota$ is the first index in $\rho$ \st $\rho_\iota\in K$
and $\rho_{\iota+1}\not\in K$.
Indeed, if a walk goes back into $K$ it also needs to go out again, so all further transformations of $\lambda$ cancel out:
$|\set{v\in \rho\cap K}{\chi_{\tau,\rho}(v)=-2}|=|\set{v\in \rho\cap K}{\chi_{\tau,\rho}(v)=2}|+1$.
We deduce that $\tau_\rho\circ\lambda\in \C[\Upsilon_\bB\setminus\{b_0\}]$
for all~$\tau_\rho\in Q$.
The ideal of $\lambda^{-1}(V_Q)$ is generated by
$\set{\tau_\rho\circ\lambda}{\tau_\rho\in Q}$, where $V_Q$ denotes the zero set of~$Q$.
Hence, we observe that $\kappa\circ \lambda^{-1}(V_Q)=\C$,
where the linear projection~$\kappa\c \C^n\to \C$ sends $(b_{v_1},\ldots,b_{v_n})$ to $b_{v_1}=b_0$.
Since $\kappa=\kappa\circ \lambda^{-1}$ we find that $\kappa(V_Q)=\C$ as well.
The ideal of the projection~$\kappa(V_Q)$ is equal
to the elimination ideal~$\bI_\tau(\Lambda)\cap \C[b_0]$ by \LEM{G}
and thus $\bI_\tau(\Lambda)\cap \C[b_0]=\bas{0}$.
Since we considered all cases we concluded the proof.
\end{proof}

\begin{proof}[Proof of \PRP{vert}.]
For all $f\in P=T\circ M\circ H\circ \mu(\E)$ as listed in \TAB{eqn}
we consider the substitution~$f\sub{x_0\to 0}$.
Next we make the identification
$a_0=0$,
$a_i=-x_i$ and
$b_i=-\ii\, y_i$ for $i>0$,
where $(x_1,y_1):=(0,0)$, $(x_2,y_2):=(1,0)$
and $x_i, y_i\in \Upsilon_R$ for $i\in\Z_{\geq 0}\setminus\{1,2\}$.
We find that the ideal of~$P\cup \{x_0\}$
is generated by $\set{\varphi(e)}{e\in \E}\cup\{x_0\}$,
where
\[
\varphi(\{i,j\}):=
\begin{cases}
a_{0i}+b_{0i}                        & \text{ if } 0\in \{i,j\},\\
(a_{ij}+b_{ij})\cdot(a_{ij}- b_{ij}) & \text{ if } 0\notin \{i,j\} \text{ with } i<j.\\
\end{cases}
\]
As each generator in $\set{\varphi(e)}{e\in \E}\cup\{x_0\}$ splits into linear factors,
we find that $V(P\cup \{x_0\})$
is equal to a union of linear spaces~$W_1\cup\cdots\cup W_r$.
Let us consider the ideal $\bI_\tau(\Lambda)\subset \C[\Upsilon_\bA]$
via the inclusion $\Upsilon_\bA\subset \Upsilon_R$
as an ideal in the ring~$\C[\Upsilon_R]$.
Under this identification there exist
for all~$1\leq i\leq r$ a sign labeling~$\tau$
\st $\bI_\tau(\Lambda)+\bas{x_0}$ is the ideal of~$W_i$.
Hence, $\left(\bI_\tau(\Lambda)+\bas{x_0}\right)\cap \C[b_0]$
is the ideal of~$\pi(W_i)$ by \LEM{G}.
We have $\left(\bI_\tau(\Lambda)+\bas{x_0}\right)\cap \C[b_0]=\left(\bI_\tau(\Lambda)\cap \C[b_0]\right)+\bas{x_0}$
and thus the proof is concluded by \PRP{b0}.
\end{proof}

\clearpage
\bibliography{graph-splitting}

Georg Grasegger
\\
Johann Radon Institute for Computational and Applied Mathematics (RICAM),
\\Austrian Academy of Sciences
\\
georg.grasegger@ricam.oeaw.ac.at

Boulos El Hilany
\\
Institut f\"ur Analysis und Algebra, TU Braunschweig
\\
{b.el-hilany@tu-braunschweig.de}
\\
\href{https://boulos-elhilany.com}{boulos-elhilany.com}

Niels Lubbes
\\
Johann Radon Institute for Computational and Applied Mathematics (RICAM),
\\Austrian Academy of Sciences
\\
info@nielslubbes.com

\end{document}